\newcommand{\teich}{Teichm\"uller }
\newcommand{\cftcat}{\ensuremath{\mathcal{C} }}
\newcommand{\disk}{\ensuremath{\mathbb{D}} } 
\newcommand{\cdisk}{\ensuremath{\overline{\mathbb{D}}}} 
\newcommand{\sphere}{\hat{\Bbb{C}}} 
\newcommand{\riem}{\Sigma}  
\renewcommand{\Bbb}[1]{\ensuremath{\mathbb{#1}}}
\newcommand{\st}{\, | \,} 
\newcommand{\ann}{\Bbb{A}}  
\newcommand{\Oqc}{\mathcal{O}^{\mathrm{qc}}} 
\newcommand{\qs}{\operatorname{QS}}
\newcommand{\aoneinfinity}{A_1^\infty(\mathbb{D})}
\newcommand{\aonetwo}{A_1^2(\mathbb{D})}
\newcommand{\Oqco}{\Oqc_{\mathrm{WP}}}
\newcommand{\qso}{\operatorname{QS}_{\mathrm{WP}}}
\newcommand{\Chat}{\hat{\Bbb{C}}}
\newcommand{\tmod}{\operatorname{Mod}}
\newtheorem{theorem}{Theorem}[section]
\theoremstyle{definition}
\newtheorem{definition}[theorem]{Definition}
\theoremstyle{remark}
\newtheorem{remark}[theorem]{Remark}
\numberwithin{equation}{section}
\begin{document}

\title[Quasiconformal Teichm\"uller theory and CFT]{Quasiconformal Teichm\"uller theory as an analytical foundation for two-dimensional conformal field theory}

\author{David Radnell}
\address{David Radnell \\ Department of Mathematics and Systems Analysis \\
Aalto University \\
P.O. Box 11100,  FI-00076 Aalto, Finland }
\email{david.radnell@aalto.fi}

\author{Eric Schippers}
\address{Eric Schippers \\ Department of Mathematics \\
University of Manitoba\\
Winnipeg, Manitoba \\  R3T 2N2 \\ Canada}
\email{eric\_schippers@umanitoba.ca}
\thanks{}

\author{Wolfgang Staubach}
\address{Wolfgang Staubach\\ Department of Mathematics\\
Uppsala University\\
Box 480\\ 751 06 Uppsala\\ Sweden}
\email{wulf@math.uu.se}

\subjclass[2010]{Primary 30F60; Secondary 30C55, 30C62, 32G15, 46E20, 81T40}

\date{\today}

\begin{abstract}
The functorial mathematical definition of conformal field theory was first formulated approximately 30 years ago. The underlying geometric category is based on the moduli space of Riemann surfaces with parametrized boundary components and the sewing operation. We survey the  recent and careful study of these objects, which has led to significant connections with quasiconformal
 \teich theory and geometric function theory.

In particular we propose that the natural
 analytic setting for conformal field theory
 is the moduli space of Riemann surfaces with so-called Weil-Petersson
 class parametrizations.  A collection
 of rigorous analytic results is advanced here as evidence.
  This class of parametrizations has the required regularity for CFT on one hand, and on the other hand are natural and of interest in their own right in geometric function theory.
\end{abstract}

\thanks{Eric Schippers and Wolfgang Staubach are grateful for the financial support from the Wenner-Gren Foundations. Eric Schippers is also partially supported by the National Sciences and Engineering Research Council of Canada. }

\maketitle

\begin{section}{Introduction} \label{Introduction}

\begin{subsection}{Introduction}

Two-dimensional \emph{conformal field theory} (CFT) models a wide range of physical phenomena.  Its mathematical structures connect to many branches of mathematics, including complex geometry and analysis, representation theory, algebraic geometry, topology, and stochastic analysis. There are several mathematical notions of CFT, each of which is relevant to probing the particular mathematical structures one is interested in. Without attempting any overview of this vast subject, we first highlight some early literature, and then explain the purpose of this review.\bigskip

The conformal symmetry group in two-dimensional quantum field theory goes back to at least the Thirring model from 1958, although this was perhaps not fully recognized immediately. The role of conformal symmetry was expanded upon in the 1960's and appeared in string theory in the 1970's. At the same time in string theory the moduli space of Riemann surfaces also appeared via the world sheets of strings. The rich algebraic structure of CFT led to solvable models in physics.  The development of CFT as an independent field was in part due to the seminal work of A.~A. Belavin, A.~M. Polyakov, and A.~B. Zamolodchikov \cite{BPZ} from 1984 and the development by R. E. Borcherds \cite{Borcherds} and I. B. Frenkel, J. Lepowsky and A. Meurman \cite{frenklepowskymeurman} of vertex operator algebras in mathematics.  Geometric formulations of CFT were developed by 1987 based on the complex analytic geometry of the moduli space of Riemann surfaces. See for example \cite{FS, Segal87, SegalPublished, Vafa1987} and references therein. G. Segal \cite{Segal87, SegalPublished} and M. Kontsevich sketched a mathematically rigorous axiomatic definition of CFT, highlighting the geometric operation of sewing Riemann surfaces using boundary parametrizations and the algebraic structure this produces (see Section \ref{se:CFT}). A very similar approach using surface with punctures and local coordinates was developed by C. Vafa \cite{Vafa1987}. \bigskip

We will be concerned here with the functorial definition of G. Segal \cite{SegalPublished} and the ongoing program to rigorously construct CFTs from vertex operator algebras (VOAs) carried out by Y.-Z. Huang and others (see for example \cite{Huang_91, Huang_97, Huang,  Huang_Diff_Eq, Huang_Diff_Int, HuangKong_07}). Of particular interest are the related notions of   \emph{ $($holomorphic$)$ modular functors} and \emph{ $($holomorphic$)$ weakly conformal field theories} formulated by Segal \cite{SegalPublished}. These objects encode the rich mathematical structure of chiral CFT, and their construction is the first step in constructing full CFTs from VOAs. See \cite{Huang_CFT} for further explanations and an overview of the mathematical development of CFT. Moreover Segal's definition of CFT involves holomorphicity
properties of infinite-dimensional moduli spaces and the sewing operation, which need to be established rigorously. As a consequence, we have decided to focus here
on pinpointing the correct analytic and geometric setting for these infinite-dimensional moduli spaces, and clarifying the connections of the problems arising in this context with quasiconformal Teichm\"uller theory.\bigskip

We should also briefly explain the context of this paper within the existing literature. Many topological and geometric
questions in CFT can be investigated without dealing with the
infinite-dimensional spaces. For instance, as was mentioned in \cite[page 198]{Vafa1987}: ``\emph{The infinite-dimensional space $[$just mentioned$]$ may at first
sight seem too complicated to deal with. But it turns out that all topological questions about it can be reduced to questions about a finite-dimensional space
$\tilde{P}(g,n)$ which is the moduli of Riemann surfaces with ordered punctures and a choice of a non-vanishing tangent vector at each point.}"  Furthermore, numerous rigorous mathematical works have studied $\tilde{P}(g,n)$ from the point of view of topology, algebraic geometry, category theory and \teich theory and have made far reaching conclusions. \bigskip

Nevertheless, many deep problems of CFT
involve the infinite-dimensional moduli spaces, see e.g. \cite{Huang_91, Huang,  Huang_CFT} and reference therein.
To our knowledge, apart from the work of the authors \cite{Radnell_thesis, RadnellSchippers_monster, RSS_Filbert2} and that of Y.-Z. Huang \cite{Huang} in genus zero and K. Barron \cite{BarronN1, BarronN2} in the genus zero super case, neither the analytic structure of these infinite-dimensional moduli spaces, nor even their point-set topology, have been rigorously defined or studied.
The success of the construction program of CFTs from VOAs in genus zero and one, and the profound new discoveries that it entailed, makes the infinite-dimensional moduli spaces both unavoidable and of great continued interest in the higher genus case. Its study motivates the introduction of quasiconformal \teich spaces into CFT, and has influenced our own work which brings together tools and ideas from CFT, geometric function theory, Teichm\"uller theory and hard analysis.\bigskip

The analytic structure of these infinite-dimensional moduli spaces depends on the class or boundary parametrizations used. There is a subtle interplay between the regularity of the parametrization, regularity of the boundary curve, and the function spaces used to model the moduli spaces. \bigskip

Our work shows that the so-called Weil-Petersson class boundary parametrizations
provide the correct analytic setting for conformal field theory.  Furthermore, using the larger class of quasisymmetric boundary parametrizations (equivalently, quasiconformally extendible conformal
maps in the puncture picture) draws a clear connection between the moduli space appearing in
CFT and quasiconformal Teichm\"uller space.  This leads to significant insight into the
moduli space appearing in CFT, which can be exploited to resolve many longstanding analytic and geometric
conjectures.\bigskip

The original and ongoing motivation for our work is threefold. The first is that the definitions of  \emph{holomorphic modular functors} and \emph{weakly conformal field theories} themselves rely on a number of analytic and geometric conjectures. We have recently rigorously formulated and proved some of these conjectures with  the help of quasiconformal \teich theory. In doing so, we have several aims. One is to develop a natural and useful analytic foundation for these definitions on which further geometric structures can be defined and studied. Another is to contribute to the program of constructing CFT from vertex operator algebras. Completion of the higher-genus theory not only requires these rigorous definitions, but also certain analytic results which can only be addressed within the quasiconformal \teich setting.  A third is the use of ideas from CFT to obtain new results which are of interest in \teich theory and geometric function theory. More generally the aim is to uncover and make explicit the deep connections between these fields.\bigskip

We conclude this section with some observations.  Ten years ago we showed that
 the rigged moduli space is a quotient of Teichm\"uller space by a discrete group.
 Thus the rigged moduli space has been studied
 in Teichm\"uller theory, in a different form, for decades before it appeared in conformal field theory.
 This has tremendous implications for both fields beyond the resolution of
 analytic issues. Simply translating some of the ideas from one field to
 the other, leads to significant insights in both fields.  Many of the analytic
 issues in CFT have already been resolved; at worst,
 the setting for their resolution is now in place. Moreover the insights arising from the interconnection between Teichm\"uller theory and CFT are of utmost importance in solving certain problems in CFT, e.g. sewing properties of meromorphic functions on moduli spaces (see Y.-Z. Huang's article in this proceedings) and a description
 of the determinant line bundle (see Section \ref{se:analytic_setting_detline}).
 We hope that this paper will entice researchers in CFT and Teichm\"uller theory to explore the rich connections between these fields.

\end{subsection}

\begin{subsection}{Organization of the paper}
 In Section \ref{se:CFT}, we sketch the definition of two-dimensional conformal field theory and related notions and
 extract the analytic requirements of these definitions.
 We only provide a very general overview; the reader can consult \cite{SegalPublished} for more
 on this question.  We also define the rigged moduli space, and introduce the question of which
 choice of riggings, which is the theme of this paper.  In Section \ref{se:Teich_theory}
 we outline the basic ideas and results of quasiconformal Teichm\"uller theory.
 In Section \ref{se:Teich_RMS_correspondence}  we sketch our results (in various combinations
 of authorship) on the correspondence between Teichm\"uller space and the rigged moduli space.
 We also define the sewing operation and outline our results on holomorphicity.  Finally,
 we define the Weil-Petersson class Teichm\"uller space in genus $g$ with $n$ boundary curves.
 In Section \ref{se:analytic_setting_detline} we define the decompositions of Fourier series
 which appear in the definition of conformal field theory, and define the determinant line
 bundle.  We discuss our results on these decompositions in the context of quasisymmetric
 riggings, our results on the jump formula for quasidisks and Weil-Petersson class quasidisks,
 and how they show that the decompositions hold in the case of quasisymmetric riggings.
 Finally, we give some new results that outline the connection of the operator $\pi$ (one operator through which the
 determinant line bundle can be defined) to the Grunsky operator in geometric function theory,
 and illustrate the relevance of the Weil-Petersson class for the determinant line bundle. In the final section we review the
 case for quasisymmetric or Weil-Petersson class riggings.
\end{subsection}
\end{section}
\begin{section}{Conformal field theory}  \label{se:CFT}
\begin{subsection}{Notation} \label{se:notation}
Let $\Chat$ be the Riemann sphere, $\disk = \disk^+ = \{z \in \mathbb{C} \st |z|<1\}$, and $\disk^- = \Chat \setminus \overline{\disk}$.  For $r, s \in \mathbb{R}_{+}$ with $r<s$,
let  $\ann_r^s = \{ z \st r<|z| < s \}$.
\end{subsection}
\begin{subsection}{The rigged moduli space of CFT}\label{riggedmoduli}
 In this section we give a definition of the moduli space of rigged Riemann surfaces arising
 in CFT \cite{SegalPublished, Vafa1987}.  We then give a preliminary discussion on the ramifications of different choices of
 analytic categories in this definition.  Throughout this section, the term ``conformal''
 denotes one-to-one holomorphic maps (as opposed to locally one-to-one).


 A Riemann surface is a complex manifold of complex-dimension one.
 Let $g,k,l$ be positive integers.  We say that a Riemann surface $\riem$ is of type $(g,k,l)$
 if it has $g$ handles, the boundary contains $k$ punctures and has $l$ borders homeomorphic to the circle, and no other boundary points. Although punctures and borders are not distinguishable topologically, they are distinguishable
 holomorphically: for example, by Liouville's theorem the sphere with one puncture is homeomorphic but not biholomorphic to the disk.

 Furthermore the term
 ``border'' has a precise meaning \cite{Ahlfors_Sario}.  For our purposes, it
  suffices to say that the Riemann surface has a double, and its boundary is an analytic curve in the
 double.
 Let $\partial_i \riem$ be a border homeomorphic to $\mathbb{S}^1$.
 A ``collar chart'' is a biholomorphism $\zeta_i:A_i \rightarrow \mathbb{A}_1^r$ where $r>1$ and
 $A_i$ is an open set in $\riem$ bounded on one side by $\partial_i \riem$ and on the other
 by a simple closed analytic curve in $\riem$ homotopic to $\partial_i \riem$. We furthermore
  require that the chart has a homeomorphic extension to $\partial_i \riem$
 such that $\zeta_i(\partial_i \riem) = \mathbb{S}^1$.
 By Schwarz reflection the chart has a conformal extension
 to an open neighbourhood of the boundary curve $\partial_i \riem$ in the double.  We will
 not have use for the double except in this definition; however, the extension to the
 boundary curve $\partial_i \riem$ will be used frequently without comment.

\begin{definition} Let $\riem$ be a Riemann surface.
\begin{enumerate}
\item $\riem$ is a \emph{bordered Riemann surface of type $(g, n)$} if it is of type $(g, 0, n)$.   Such surfaces will be denoted $\riem^B$.
\item $\riem$ is a \emph{punctured Riemann surface of type $(g, n)$} if it is of type $(g, n, 0)$. Such surfaces will be denoted $\riem^P$ and the punctures by $p_1, \ldots, p_n$.
\item A connected component of the boundary of a bordered Riemann surface $\riem^B$ is called a \emph{boundary curve}. Denote the boundary curves by $\partial_i \riem^B$ for $i=1,\ldots, n$. Note that each boundary curve is homeomorphic to $\mathbb{S}^1$. For a punctured surface, $\partial_i \riem^P = p_i$.
\end{enumerate}
\end{definition}
 We will often treat the punctures of a Riemann surface as points in the Riemann surface,
 and similarly for the borders.
 \begin{remark} Every class of mappings considered in this paper has a unique
  continuous extension to the punctures or borders (whichever is relevant).
  This is an important technical
  point, but we will not repeat it in each special case.
 \end{remark}

 There are two basic models of the rigged moduli space: as a collection of punctured
 Riemann surfaces with specified conformal maps onto neighborhoods of the punctures \cite{Vafa1987}, or as a collection of bordered Riemann
 surfaces with boundary parametrizations \cite{SegalPublished}.  We will refer to these as the puncture
 and border model respectively.  In both cases, we will call the extra data (conformal
 maps or parametrizations) ``riggings''.   For now we will purposefully not specify
 the analytic category of the riggings.

 {\bf Puncture model}.  Fix a punctured Riemann surface $\riem^P$ of type $(g,n)$ with
 punctures, $p_1,\ldots,p_n$.  Let $\mathcal{A}^P$ denote a class of maps
 $f:\disk \rightarrow \mathbb{C}$ such that $f(0)=0$ and $f$ is conformal.
 \begin{definition}[Riggings, puncture model]  \label{de:riggings_puncture_model}
  The class of riggings
  $\mathcal{R}(\mathcal{A}^P,\riem^P)$ is the collection of $n$-tuples of conformal maps
  $(f_1,\ldots,f_n)$ such that
  (1) $f_i:\disk \rightarrow \riem^P$ is conformal, (2) $f_i(0)=p_i$,
  (3) the closures of the images do not overlap, (4) there are local
  biholomorphic coordinates $\zeta_i$ on an open neighbourhood of
  the closure of $f_i(\disk)$ such that $\zeta_i(p_i)=0$ and
  $\zeta_i \circ f_i \in \mathcal{A}^P$.
 \end{definition}
 \begin{remark}  For all the classes $\mathcal{A}^P$ considered in this
  paper, if there is one normalized biholomorphic coordinate $\zeta_i$ at $p_i$ such that
  $\zeta_i \circ f_i \in \mathcal{A}^P$, then every normalized biholomorphic coordinate
  defined on an open neighbourhood of $f_i(\disk)$ has this property.  This is an
  important technical point, but we will take it for granted here.
 \end{remark}
 \begin{definition}[Rigged moduli space, puncture model]
   The $\mathcal{R}(\mathcal{A}^P,\riem^P)$-rigged moduli space of punctured
 Riemann surfaces is
 \[  \widetilde{\mathcal{M}}^P(\mathcal{A}^P) = \{ (\riem^P_1,f) \}/\sim  \]
 where $\riem^P_1$ is a punctured Riemann surface, $f =
 (f_1,\ldots,f_n) \in \mathcal{R}(\mathcal{A}^P,\riem^P)$,
 and
 $  (\riem^P_1,f) \sim (\riem^P_2,g)$
 if and only if there is a conformal bijection $\sigma: \riem^P_1 \rightarrow \riem^P_2$
 such that $g_i = \sigma \circ f_i$ for all $i=1,\ldots,n$.
 \end{definition}
 \begin{remark}
  The condition that $f_i$ are defined on the unit disk $\disk$ might conceivably
  be relaxed, e.g.
  one might consider germs of conformal maps at $0$.  Similarly, the condition
  that the closures of the domains do not intersect can be relaxed.
 \end{remark}

 {\bf Border model}: Let $\riem^B$ be a bordered Riemann
  surface of type $(g,n)$.  Let $\mathcal{A}^B$ be a class of orientation preserving homeomorphisms
  $\phi:\mathbb{S}^1 \rightarrow \mathbb{S}^1$.
 \begin{definition}[Riggings, border model] \label{de:riggings_border_model}
   The class of riggings $\mathcal{R}(\mathcal{A}^B,\riem^B)$ is the collection
 of $n$-tuples of maps $(\phi_1,\ldots,\phi_n)$ such that (1) $\phi_i :\mathbb{S}^1 \rightarrow \partial_i \riem^B$ and (2) there is a collar chart $\zeta_i$ of
  each boundary curve $\partial_i \riem$ such that $\zeta_i \circ \phi_i \in \mathcal{A}^B$ for $i=1,\ldots,n$.
 \end{definition}

With respect to the orientation of $\partial_i \riem$ induced from $\riem$ and the standard orientation of $\mathbb{S}^1$, $\phi_i$ is orientation reversing. While mathematically this choice is arbitrary and of no importance, the choice we made is to match certain conventions in \teich theory. In \cite{RadnellSchippers_monster} the opposite choice was made by using collar charts onto annuli $\ann_r^1$.

 \begin{remark} \label{re:riggings_border_model} Again, for all the choices of $\mathcal{A}^B$
  considered in this paper, if $\zeta_i \circ \phi_i \in \mathcal{A}^B$
  for one collar chart, then it is in $\mathcal{A}^B$ for all of them.
 \end{remark}

 \begin{definition}[Rigged moduli space, border model]
  The rigged moduli space of bordered Riemann surfaces is
 \[  \widetilde{\mathcal{M}}^B(\mathcal{A}^B) =  \{ (\riem^B_1,\phi) \}/\sim  \]
 where $\riem^B_1$ is a bordered Riemann surface, $\phi =(\phi_1,\ldots,\phi_n) \in \mathcal{R}(\mathcal{A}^B,\riem^B)$, and
 $  (\riem^B_1,\phi) \sim (\riem^B_2,\psi) $
 if and only if there is a conformal bijection $\sigma:\riem^B_1 \rightarrow \riem^B_2$
 such that $\psi_i = \sigma \circ \phi_i$ for $i=1,\ldots,n$.
 \end{definition}

 The question is, what are $\mathcal{A}^P$ and $\mathcal{A}^B$?
  This is a deep
 analytic question.  The temptation to dismiss it is easily dispelled if one reflects
 on the analogous example of Fourier series.   $L^2$ spaces allow the unfettered expression
 of the linear algebraic structure of Fourier series. Other choices of regularity
  of the functions (say, piecewise-$C^1$ or smooth)
 lead to theorems which bury simple algebraic ideas in a tangle of qualifications.
 On the other hand, the cost of choosing the setting in which the theorems have
 simple expressions is that the proofs get harder and the definitions more subtle,
 so that the work is done at the beginning.
 Our work is very much of this nature.

 In the next section, we sketch the definition of conformal field theory, and
 then discuss the analytic conditions necessary for the realization of an
 example.
\end{subsection}

\begin{subsection}{Definition of Conformal Field theory}
\label{ss:CFT}

We give very brief outlines of the definitions of CFT, weakly CFT and modular functor as originally given in Segal \cite{SegalPublished}, with commentary on the analyticity requirements. See also \cite{Hu_Kriz05, Huang_97, Huang_CFT}. We follow the expositions in \cite{Huang_CFT, SegalPublished}.

Recall from Subsection 2.2 that a rigged moduli space (border model) element is a conformal equivalence class of Riemann surfaces with parametrized boundary components. In addition to the ordering of the boundary components we now assign an element of $\{+, - \}$ to each boundary component and refer to this choice as the orientation.  Moreover, we allow Riemann surfaces to be disconnected. In this subsection we temporarily use the term ``rigged moduli space'' in this generalized setting; in the remainder of the paper we will assume the Riemann surfaces are connected.

Let $\cftcat$ be the category defined as follows. The objects are finite ordered sets of copies of the unit circle $\mathbb{S}^1$.  Morphisms are elements of the rigged moduli space where the copies of $\mathbb{S}^1$ in the domain (codomain) parametrize the negatively (positively) oriented boundary components. Composition of morphisms is via the sewing operation as defined in Subsection \ref{sewingoperation}. Let $\mathcal{T}$ be the tensor category of complete locally convex topological complex vector spaces with nondegenerate bilinear forms. Morphisms are trace-class operators. A \emph{conformal field theory} is a projective functor from $\cftcat$ to $\mathcal{T}$ satisfying a number of axioms, see e.g. \cite{Huang_CFT}. Chiral CFT refers to the holomorphic and antiholomorphic parts of CFT. Mathematically this is formalized below in the more general notion of a holomorphic weakly conformal field theory.

If $\Phi$ denotes a finite set of labels, let $\mathcal{S}_{\Phi}$ be the category whose objects are rigged moduli space elements where each boundary component of the Riemann surfaces is assigned a label from $\Phi$. Morphisms are given by the sewing operation.
A \emph{holomorphic modular functor} is a holomorphic functor $E$ from  $\mathcal{S}_{\Phi}$ to the category of finite-dimensional vector spaces.

Given a holomorphic modular functor we can extend the category $\cftcat$ to a category $\cftcat_E$ where the morphisms are now pairs $([\riem, \phi], E([\riem, \phi]))$
where $[\riem , \phi]$ is a rigged moduli space element and $E([\riem, \phi])$ is the vector space specified by the modular functor.
A \emph{holomorphic weakly conformal field theory} is a functor $U$ from $\cftcat_E$ to $\mathcal{T}$ satisfying axioms analogous to those in the definition of a CFT. Full CFTs can be constructed from holomorphic weakly conformal field theories.

To make the definitions of a holomorphic modular functor and a holomorphic weakly conformal field theory rigorous, a number of holomorphicity requirements must be addressed, and these justify the relevance of quasiconformal Teichm\"uller in this context.
It is crucial to understand that these requirements are hidden in the abstractness of the term ``holomorphic functor''. Explicitly they are:
\begin{enumerate}
\item The rigged moduli space is an infinite-dimensional complex manifold.
\item The sewing operation is holomorphic.
\item The vector spaces $E[(\riem, \phi)]$ form a holomorphic vector bundle over the rigged moduli space and the sewing operation is also holomorphic on the bundle level.
\item The determinant line bundle (see Section \ref{se:decompositions}) is a one-dimensional example of a modular functor.
\item $U([\riem, \phi], E([\riem, \phi]))$ depends holomorphically on $([\riem, \phi], E([\riem, \phi]))$.
\end{enumerate}

Items (1), (2) and (4) are not assumptions but are facts that must be proven prior to making the above definitions.
Items (3) and (5) must be addressed in the construction of examples of holomorphic modular functors and holomorphic weakly conformal field theories.

To the authors' knowledge the notions of a holomorphic modular functor and a holomorphic weakly conformal field theory for genus greater than zero have never been studied in detail due to these infinite-dimensional holomorphicity issues involving boundary parametrizations. The results under review in this article address items (1) and (2) and more generally provide a natural analytic setting that enables rigorous study of these objects.  Forthcoming papers will address item (4).

\begin{remark}
The original terminology of \cite{SegalPublished} is \emph{modular functor} and \emph{weakly conformal field theory}. Other authors have added the word ``holomorphic"  to make an important distinction from various notions of \emph{topological modular functor} that have appeared in the literature (see for example \cite{AndersonUenoGZ, bakalovkirill, Walker}). To add to the confusion, the term ``complex-analytic modular functor" (used in \cite{bakalovkirill} and by others) is based on the finite-dimensional moduli space $\tilde{P}(g,n)$ mentioned in Section \ref{Introduction}.
\end{remark}

\end{subsection}
\begin{subsection}{Which class of riggings?}
We now return to the question of what analytic conditions to place on the
riggings.

For any choice of $\mathcal{A}^P$, there is a corresponding $\mathcal{A}^B$ which makes
 the rigged moduli spaces of border and puncture type identical.  Thus there is only one
 analytic condition to be chosen, but which one?     Some
 choices in the literature are:
 \begin{enumerate}
  \item  $\mathcal{A}^B$ consists of analytic diffeomorphisms of $\mathbb{S}^1$, and $\mathcal{A}^P$
  consists of maps which are conformal on $\disk$ holomorphic on some open neighbourhood of the
  closure of $\disk$;
  \item  $\mathcal{A}^B$ consists of diffeomorphisms of $\mathbb{S}^1$ and $\mathcal{A}^P$ consists of maps which are conformal on $\disk$ and
      extend diffeomorphically to the closure of $\disk$;
  \item $\mathcal{A}^P$ consists of quasisymmetric homeomorphisms of $\mathbb{S}^1$
   and $\mathcal{A}^B$ consists of conformal maps of $\disk$ with quasiconformal extensions
   to an open neighbourhood of the closure of $\disk$;
  \item $\mathcal{A}^P$ consists of Weil-Petersson class quasisymmetric maps,
  and $\mathcal{A}^B$ consists of Weil-Petersson class quasiconformally
  extendible conformal maps.
 \end{enumerate}
 We will explain the terms ``quasisymmetric'', ``quasiconformal'', and
 ``WP-class'' ahead.
 The first two choices are the most common in conformal field theory.  The last
 two appear in Teichm\"uller theory, but in
 the context of conformal field theory/rigged moduli space, they
 appear almost exclusively in our own work.
 Many authors have suggested a connection between string theory and the universal
  Teichm\"uller space $T(\disk)$ (defined ahead); furthermore, there is a direct connection to representations
  of the Virasoro algebra since $T(\disk)$ can be identified with quasisymmetries of the
   circle modulo M\"obius maps of the circle.  Quasisymmetries and
 quasiconformal extendible maps frequently appear in those contexts.  A (somewhat dated) review of
 some of the literature in this direction can be found in \cite{Pekonen}.
 More recent references include \cite{Pickrell,SergeevArmen}.

 The WP-class quasisymmetries are the correct analytic choice
 for the formulation of CFT.  The quasisymmetries are nevertheless an important class because
 that choice results in a link between quasiconformal Teichm\"uller theory and
 conformal field theory. The implications of these choices will be reviewed
 at the end of this paper, but we give a brief overview now.

  If one chooses $\mathcal{A}^P$ to be the
 quasisymmetries, then (as we will see) the rigged
   moduli space is, up to a discrete group action, equal to the Teichm\"uller
   space of bordered surfaces.  This choice is sufficient to endow the rigged
   moduli space with a (Banach manifold) complex structure, prove
   holomorphicity of sewing, and give holomorphic structures to some
   vector bundles over the rigged moduli space, but not the determinant
   line bundle.  On the other hand, if one chooses $\mathcal{A}^P$ to be
   the WP-class quasisymmetries,
   then we can identify the rigged moduli space (up to a discrete group
   action) with the WP-class Teichm\"uller space.  This choice is sufficient
   to endow the rigged moduli space with a (Hilbert manifold) complex structure,
   prove holomorphicity of sewing (currently work in progress by the authors), give holomorphic structure
   to vector bundles including the determinant line, and show the existence of sections (equivalently, determinants of certain operators) of the determinant line bundle.
	
 The remainder of the paper is dedicated to a careful exposition
 of these statements.

\end{subsection}
\end{section}
\begin{section}{Quasiconformal mappings and \teich theory}  \label{se:Teich_theory}
Here we collect in a concise way some of the ideas of quasiconformal Teichm\"uller theory.


\begin{subsection}{Differentials, quasiconformal mappings and the Beltrami equation}

Quasiconformal Teichm\"uller theory was introduced by L. Bers \cite{bers-65,bers-london,Bers1}. Here we shall review some basic concepts from this theory which are crucial in the applications to CFT.

\begin{definition}
  A \emph{Beltrami differential} on $\Sigma$
 is a $(-1,1)$ differential $\omega$, i.e., a differential given
 in a local biholomorphic coordinate $z$ by $\mu(z)d\bar{z}/dz$, such
 that $\mu$ is Lebesgue-measurable in every choice of coordinate and
 $||\mu||_\infty <1$.  The expression $||\mu||_{\infty}$ is
 well-defined, since $\mu$ transforms under a local biholomorphic change of parameter
 $w=g(z)$  according to the rule
 $\tilde{\mu}(g(z))\overline{g'(z)}g'(z)^{-1}=\mu(z)$ and
 thus $|\tilde{\mu}(g(z))|=|\mu(z)|$.
\end{definition}

Denote the space of Beltrami differentials on $\Sigma$ by $L^\infty_{(-1,1)}(\Sigma)_1$.
The importance of the Beltrami differentials stems from the following fundamental partial differential equation in geometric function theory and Teichm\"uller theory.
\begin{definition}
  Let $\Sigma$ be a Riemann surface.
 The \emph{Beltrami equation} is the differential equation given in local
 coordinates by $\overline{\partial} f = \omega \partial f$
 where $\omega$ is a Beltrami differential.
\end{definition}
In this connection, we have the important theorem, see e.g. \cite{AstalIwaniecMartin}:
 \begin{theorem} \label{th:existuniqBeltrami}
  Given any Beltrami differential on a Riemann surface $\Sigma$,
  there exists a homeomorphism $f:\Sigma \rightarrow \Sigma_1$, onto
  a Riemann surface $\Sigma_1$, which is absolutely continuous
  on lines and is a solution of the Beltrami equation almost everywhere.
  This solution is unique in the sense that given any other solution
  $\tilde{f}: \Sigma \rightarrow \widetilde{\Sigma}_1$, there exists
  a biholomorphism $g:\Sigma_1 \rightarrow \widetilde{\Sigma}_1$
  such that $g \circ f = \tilde{f}$.
 \end{theorem}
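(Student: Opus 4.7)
The plan is to reduce to the classical planar measurable Riemann mapping theorem of Morrey and Ahlfors--Bers, and then globalize to $\Sigma$ by a chart-patching argument driven by uniqueness. First I would establish the result for $\Sigma = \mathbb{C}$: given a Lebesgue-measurable $\mu:\mathbb{C}\to\mathbb{C}$ with $\|\mu\|_\infty < 1$, produce a quasiconformal homeomorphism $f:\mathbb{C}\to\mathbb{C}$ satisfying $\bar\partial f = \mu\,\partial f$ almost everywhere. Assuming first that $\mu$ is compactly supported, I search for $f$ in the form $f(z) = z + (Cg)(z)$, where $C$ is the Cauchy transform, so that $\bar\partial f = g$ and $\partial f = 1 + Bg$ with $B$ the Beurling transform. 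The Beltrami equation becomes the fixed-point equation $g = \mu + \mu Bg$, which is solvable by a Neumann series in $L^p(\mathbb{C})$ for $p>2$ chosen close enough to $2$ that $\|\mu\|_\infty \cdot \|B\|_{L^p\to L^p} < 1$; here one exploits the Calder\'on--Zygmund fact that $\|B\|_{L^p}\to 1$ as $p\to 2$. The Sobolev embedding $W^{1,p}\hookrightarrow C^{0,\alpha}$ for $p>2$ yields continuity of $f$, and approximation of $\mu$ by smooth compactly supported $\mu_n$ combined with Stoilow factorization and a normal-family argument produces the desired homeomorphism. The compact-support hypothesis is then removed by exhaustion.

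Once planar existence is in hand, planar uniqueness follows from \emph{Weyl's lemma}: if $f,\tilde f$ are two homeomorphic solutions for the same $\mu$, then $\tilde f\circ f^{-1}$ is continuous and weakly annihilated by $\bar\partial$, hence holomorphic. To globalize to an arbitrary Riemann surface $\Sigma$, choose a holomorphic atlas $\{(U_\alpha,z_\alpha)\}$; the Beltrami differential $\omega$ is represented in each chart by a measurable $\mu_\alpha$ with $|\mu_\alpha|<1$ almost everywhere, and the planar result furnishes a quasiconformal homeomorphism $f_\alpha$ on $z_\alpha(U_\alpha)$ solving the Beltrami equation with coefficient $\mu_\alpha$. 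I then equip the underlying topological space of $\Sigma$ with the new atlas $\{(U_\alpha,f_\alpha\circ z_\alpha)\}$. The transition functions are $f_\beta\circ z_\beta\circ z_\alpha^{-1}\circ f_\alpha^{-1}$, and planar uniqueness applied to the two solutions $f_\alpha$ and $f_\beta\circ z_\beta\circ z_\alpha^{-1}$ of the same Beltrami equation on $z_\alpha(U_\alpha\cap U_\beta)$ shows these transitions are biholomorphic. The resulting complex manifold is $\Sigma_1$, and the identity of underlying topological spaces is the required homeomorphism $f:\Sigma\to\Sigma_1$. Global uniqueness up to biholomorphism is obtained by applying the planar uniqueness statement chart by chart and assembling the resulting biholomorphisms.

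The main analytical obstacle is the planar existence step: the sharp $L^p$ mapping properties of the Beurling transform near $p=2$ and the promotion of the $L^p$ solution to a genuine homeomorphism are the nontrivial inputs. By contrast, the passage from the planar case to the Riemann-surface case is essentially formal once uniqueness is available to guarantee compatibility of the local solutions. A minor additional check is that $\Sigma_1$ is well-defined up to biholomorphism independently of the choice of atlas on $\Sigma$, but this again reduces to the uniqueness part of the planar theorem.
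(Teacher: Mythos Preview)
The paper does not prove this theorem; it simply states it with a reference to the textbook of Astala, Iwaniec and Martin. Your proposal is precisely the standard textbook argument found there (and in Lehto's book): planar existence via the Neumann series for the Beurling transform on $L^p$ with $p$ slightly above $2$, uniqueness via Weyl's lemma, and globalization by patching charts using the planar uniqueness. So there is nothing to compare against, but your outline is correct and is the canonical route.

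One small caution: your phrase ``removed by exhaustion'' for the non-compactly-supported case understates what is actually done. Exhausting by compactly supported truncations $\mu_n$ gives a sequence of normalized homeomorphisms $f_n$, and one must argue locally uniform convergence to a limit homeomorphism with the correct dilatation; alternatively one treats the case $\mu$ supported near $\infty$ by conjugating with $z\mapsto 1/z$ and composes. Either way this step requires a normal-families argument and is not a pure formality. Also, in the globalization step the local solutions $f_\alpha$ are only defined on the open sets $z_\alpha(U_\alpha)\subset\mathbb{C}$, not on all of $\mathbb{C}$; you should invoke the planar theorem on all of $\mathbb{C}$ applied to an extension of $\mu_\alpha$ by $0$ (or any bounded extension), since the uniqueness you use on overlaps is local and does not care about the extension.
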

 The condition ``absolutely continuous on lines'' means that if $f$ is
 written in local coordinates, in any closed
 rectangle in $\mathbb{C}$, $f$ is absolutely continuous on almost every
 vertical and horizontal line.  In particular, the partial derivatives exist
 almost everywhere and the statement that $f$ satisfies the Beltrami equation
 is meaningful.  It can be further shown that $f$ is differentiable almost
 everywhere.

 If $||\omega||_\infty=0$, then $f$ must be a biholomorphism.
 \begin{definition}\label{defn:quasiconformalmap}
The solutions of the Beltrami equation are called \emph{quasiconformal mappings}.
 \end{definition}

If $f: X  \to X_1$ is quasiconformal then, in terms of a local parameter $z$,
$$\mu(f) = \frac{\partial f}{\partial \bar{z}} / \frac{\partial f}{\partial z}$$
is called the \textit{complex dilation} of $f$. \bigskip

Although there are various equivalent definitions of quasiconformal mappings, for the purposes of this paper we will use Definition \ref{defn:quasiconformalmap}.

 Given a Beltrami differential and the corresponding quasiconformal
 solution to the Beltrami equation $f:\Sigma \rightarrow \Sigma_1$,
 one can pull back the complex structure on $\Sigma_1$ to obtain a
 new complex structure on $\Sigma$.
 Thus, one can regard a Beltrami differential as a change of the complex structure on $\Sigma$.


\end{subsection}
\begin{subsection}{Quasisymmetric maps and conformal welding} \label{se:quasisymmetric}

 In this section we define the quasisymmetries of the circle.
 Quasisymmetries play a central role in Teichm\"uller theory \cite{Lehto, Nagbook}. They arise as boundary values of quasiconformal maps. Recall the notation from Section \ref{se:notation}.

\begin{definition}\label{quasisymmetriccircle}
An orientation-preserving homeomorphism $h$ of $\mathbb{S}^1$ is called a \emph{quasisymmetric mapping}, if there is a constant $k>0$, such that for every $\alpha$ and every $\beta$ not equal to a multiple of $2\pi$, the inequality
 \[  \frac{1}{k} \leq \left| \frac{h(e^{i(\alpha+\beta)})-h(e^{i\alpha})}{h(e^{i\alpha})-h(e^{i(\alpha-\beta)})} \right|
    \leq k \]
holds. Let $\qs(\mathbb{S}^1)$ be the set of quasisymmetric maps from $\mathbb{S}^1$ to $\mathbb{S}^1$.
\end{definition}
A useful property of quasisymmetries is the following:

 \begin{theorem}
   The set of quasisymmetries form a group under composition, see e.g. \cite{Lehto}.
 \end{theorem}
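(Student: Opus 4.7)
Associativity of composition is automatic and the identity map of $\mathbb{S}^1$ satisfies the inequality in Definition~\ref{quasisymmetriccircle} with $k=1$, so the content is closure of $\qs(\mathbb{S}^1)$ under composition and inversion. I would carry out both through the standard correspondence between quasisymmetries of the circle and quasiconformal self-homeomorphisms of the disk, rather than arguing directly from the three-point inequality (where the inverse in particular is awkward, since numerator and denominator exchange roles).

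The main auxiliary input is the Beurling--Ahlfors extension theorem: an orientation-preserving homeomorphism $h:\mathbb{S}^1 \to \mathbb{S}^1$ lies in $\qs(\mathbb{S}^1)$ if and only if it admits an extension to a quasiconformal homeomorphism $H:\cdisk \to \cdisk$. Granting this, if $H_1,H_2:\cdisk \to \cdisk$ are such extensions of $h_1,h_2 \in \qs(\mathbb{S}^1)$, then $h_1 \circ h_2$ and $h_1^{-1}$ are the circle boundary values of $H_1 \circ H_2$ and $H_1^{-1}$ respectively, so it suffices to verify that these two disk maps are themselves quasiconformal.

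For the composition, the chain rule for Beltrami coefficients, valid almost everywhere by ACL-regularity of $H_1, H_2$ and of their composition, gives
\[
\mu_{H_1 \circ H_2}(z) \;=\; \frac{\mu_{H_2}(z) + \mu_{H_1}(H_2(z))\,\theta(z)}{1 + \overline{\mu_{H_2}(z)}\,\mu_{H_1}(H_2(z))\,\theta(z)}, \qquad |\theta(z)|=1,
\]
and a pointwise bound then yields $\|\mu_{H_1 \circ H_2}\|_\infty < 1$. For the inverse, differentiating $H_1^{-1} \circ H_1 = \mathrm{id}$ via the same formula gives $|\mu_{H_1^{-1}}(H_1(z))| = |\mu_{H_1}(z)|$ almost everywhere, whence $\|\mu_{H_1^{-1}}\|_\infty = \|\mu_{H_1}\|_\infty < 1$; the ACL property of $H_1^{-1}$ follows from standard Sobolev-type considerations. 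Both disk maps are therefore quasiconformal, and restricting to $\mathbb{S}^1$ completes the argument.

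The main obstacle is the Beurling--Ahlfors extension theorem itself: producing an explicit quasiconformal extension of an arbitrary $h\in\qs(\mathbb{S}^1)$ to $\disk$ (e.g.\ by averaging $h$ over arcs of length proportional to $1-|z|$) and estimating its dilatation in terms of the three-point constant $k$ requires genuine work. However, this step is classical and may be invoked from \cite{Lehto}; everything else in the proof is routine manipulation with the calculus of Beltrami coefficients.
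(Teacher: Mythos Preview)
Your argument is correct and is in fact the standard route: pass from the circle to the disk via the Beurling--Ahlfors extension (Theorem~\ref{basicqcextension}), use the composition and inversion formulas for Beltrami coefficients to see that quasiconformal self-maps of $\disk$ form a group, and restrict back to the boundary. The paper itself gives no proof of this statement---it simply cites \cite{Lehto}---so there is nothing to compare against beyond noting that your sketch is essentially how the result is established in that reference (see \cite[Section I.5]{Lehto}). One minor organizational remark: you are invoking Theorem~\ref{basicqcextension}, which in the paper is stated \emph{after} the theorem you are proving; since both are quoted from the literature without proof there is no genuine circularity, but it would be cleaner to note explicitly that the Beurling--Ahlfors theorem is logically independent of the group property.
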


The following theorem, due to A. Beurling and L. Ahlfors \cite{BeurlingAhlfors} explains the importance of quasisymmetric
mappings in Teichm\"uller theory, see also \cite[II.7]{LV}.
\begin{theorem} \label{basicqcextension}
 A homeomorphism $h:\mathbb S^1 \rightarrow
 \mathbb S^1$ is quasisymmetric if and only if there
 exists a quasiconformal map of the unit disk $\mathbb{D}$ with boundary
 values $h$.
\end{theorem}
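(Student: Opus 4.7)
The plan is to prove both implications separately, transferring problems to the upper half-plane $\HH$ when convenient by conjugating with a M\"obius transformation $\mathbb{S}^1\to\partial\HH=\RR\cup\{\infty\}$. Quasisymmetry on $\RR$ is defined by the analogous inequality
\[ \frac{1}{k} \leq \left| \frac{h(x+t)-h(x)}{h(x)-h(x-t)} \right| \leq k \]
for all $x\in\RR$ and $t>0$, and a M\"obius change of variable converts the circle version into this real version with comparable constants.

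For the easy direction (quasiconformally extendible $\Rightarrow$ quasisymmetric): Suppose $F:\disk\to\disk$ is a $K$-quasiconformal homeomorphism with boundary values $h$. The geometric definition of quasiconformality says $F$ distorts the modulus of every topological quadrilateral by a factor of at most $K$. Apply this to the quadrilateral in $\disk$ whose four distinguished boundary points are $e^{i(\alpha-\beta)}$, $e^{i\alpha}$, $e^{i(\alpha+\beta)}$, and a fourth point on the opposite arc. By the Mori-type modulus estimates for symmetric quadrilaterals, one obtains a bound on the ratio
\[ \left| \frac{h(e^{i(\alpha+\beta)})-h(e^{i\alpha})}{h(e^{i\alpha})-h(e^{i(\alpha-\beta)})} \right| \]
depending only on $K$, which is precisely Definition \ref{quasisymmetriccircle}.

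For the hard direction (quasisymmetric $\Rightarrow$ quasiconformally extendible): Work on $\HH$ and construct the explicit Beurling-Ahlfors extension
\[ F(x+iy) = \frac{1}{2y}\int_0^y \bigl(h(x+t)+h(x-t)\bigr)\,dt + \frac{i}{2y}\int_0^y\bigl(h(x+t)-h(x-t)\bigr)\,dt. \]
I would proceed in three steps. First, show that $F$ is $C^1$ on $\HH$, extends continuously to $h$ on $\RR$, and is an orientation-preserving homeomorphism of $\overline{\HH}$; continuity and monotonicity in $x$ follow from the monotonicity of $h$, while injectivity in $y$ requires a short argument using the quasisymmetry condition. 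Second, compute the partial derivatives $F_x$ and $F_y$ explicitly in terms of the difference quotients $\alpha(x,y)=\tfrac{1}{y}\int_0^y h(x+t)\,dt-h(x)$ and $\beta(x,y)=h(x)-\tfrac{1}{y}\int_{-y}^0 h(x+t)\,dt$. Third, bound the complex dilatation $|F_{\bar z}|/|F_z|$ in terms of $\alpha/\beta$ and $\beta/\alpha$, both of which lie in $[1/k',k']$ for some $k'$ depending only on the quasisymmetric constant $k$.

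The main obstacle is this last step: proving that the ratio $\alpha(x,y)/\beta(x,y)$ is uniformly bounded above and below. One has to break $[0,y]$ into dyadic sub-intervals and iteratively apply the quasisymmetry condition to pairs of adjacent intervals of equal length, accumulating the distortion in a controlled way; this is the heart of the Beurling-Ahlfors argument and yields an explicit bound on the quasiconformal constant of $F$ in terms of $k$. Once this estimate is in hand, $F$ solves a Beltrami equation with $\|\mu(F)\|_\infty<1$, so by Definition \ref{defn:quasiconformalmap} it is the desired quasiconformal extension.
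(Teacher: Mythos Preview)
The paper does not actually prove this theorem; it simply attributes the result to Beurling and Ahlfors \cite{BeurlingAhlfors} and refers the reader to \cite[II.7]{LV} for a treatment. Your sketch is precisely the classical Beurling--Ahlfors argument that those references contain, so in that sense you are reproducing the intended proof rather than offering an alternative.

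A couple of minor remarks on your outline. For the easy direction, the cleanest version works on the half-plane: the quadrilateral with vertices $x-t,\,x,\,x+t,\,\infty$ has modulus $1$, so its image under the $K$-quasiconformal extension has modulus between $1/K$ and $K$, and the modulus of $(h(x-t),h(x),h(x+t),\infty)$ is a monotone function of the ratio $\bigl(h(x+t)-h(x)\bigr)/\bigl(h(x)-h(x-t)\bigr)$. This avoids having to invoke ``Mori-type'' estimates by name. For the hard direction, your extension formula is correct and the differentiability claim is fine since $h$ is continuous; however, the bound on $\alpha/\beta$ does not really require a dyadic decomposition in the basic argument --- integrating the quasisymmetry inequality directly already gives $\alpha/\beta$ bounded in terms of $k$ (see \cite[II.7]{LV}). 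The dyadic iteration you describe is what one uses to sharpen the dependence of the dilatation on $k$, but for mere existence of a quasiconformal extension it is not needed.
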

Note that not every quasisymmetry is a diffeomorphism. Finally,  the following theorem (see \cite[III.1.4]{Lehto}) describes the
classical conformal welding of disks.

\begin{theorem}
\label{th:welding} If $h : \mathbb{S}^1 \longrightarrow \mathbb{S}^1$ is
quasisymmetric then there exists conformal maps $F$ and $G$ from
$\disk^+$ and $\disk^-$ into complementary Jordan domains $\Omega^+$
and $\Omega^-$ of $\Chat$, with quasiconformal extensions to $\Chat$ such that $(G^{-1} \circ F)|_{\mathbb{S}^1} = h$.
Moreover, the Jordan curve separating $\Omega^+$ and $\Omega^-$ is a
quasicircle.  $F$ and $G$ are determined uniquely up to simultaneous post-composition
with a M\"obius transformation.
\end{theorem}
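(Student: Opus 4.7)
The strategy is to construct $F$ and $G$ simultaneously from a single global quasiconformal self-map of $\Chat$ produced by the measurable Riemann mapping theorem. First, I would invoke Theorem \ref{basicqcextension} to extend $h$ to a quasiconformal self-homeomorphism $H$ of $\disk^-$ with boundary values $h$; the theorem is stated for $\disk=\disk^+$, but conjugation by $z\mapsto 1/\bar z$ transfers the statement to $\disk^-$. Let $\nu$ denote the complex dilation of $H$, and define a Beltrami differential $\mu$ on $\Chat$ by $\mu=0$ on $\disk^+$ and $\mu=\nu$ on $\disk^-$; it is Lebesgue measurable with $\|\mu\|_\infty=\|\nu\|_\infty<1$. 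Applying Theorem \ref{th:existuniqBeltrami} produces a quasiconformal homeomorphism $\Phi:\Chat\to\Chat$ with $\bar\partial\Phi=\mu\,\partial\Phi$ a.e., made unique by a three-point normalization.

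Second, set $F:=\Phi|_{\disk^+}$ and $G:=\Phi\circ H^{-1}$ on $\disk^-$. Since $\mu=0$ on $\disk^+$, $F$ is holomorphic and injective, hence conformal. On $\disk^-$ the dilations of $\Phi$ and $H$ coincide, so by the composition law for Beltrami coefficients the dilation of $\Phi\circ H^{-1}$ vanishes a.e., and $G$ is conformal. Both $F$ and $G$ extend quasiconformally to $\Chat$: $F$ via $\Phi$ itself, and $G$ via $\Phi$ precomposed with a quasiconformal extension of $H^{-1}$ to $\disk^+$. Writing $\Omega^+:=F(\disk^+)$ and $\Omega^-:=G(\disk^-)$, these are complementary Jordan domains sharing the common boundary $\Phi(\mathbb{S}^1)$, which is a quasicircle because $\Phi$ is a global quasiconformal self-map of $\Chat$. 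On $\mathbb{S}^1$,
\[
(G^{-1}\circ F)(z)=H\circ\Phi^{-1}\circ\Phi(z)=H(z)=h(z),
\]
establishing the welding identity.

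Third, for uniqueness, let $(F_1,G_1)$ and $(F_2,G_2)$ be two such welding pairs and let $\gamma_1$ be the curve separating $\Omega_1^\pm$. Define $\Psi:\Chat\to\Chat$ by $\Psi=F_2\circ F_1^{-1}$ on $\Omega_1^+$ and $\Psi=G_2\circ G_1^{-1}$ on $\Omega_1^-$. Using $G_i^{-1}\circ F_i=h$, a direct check shows the two pieces agree on $\gamma_1$, so $\Psi$ is a homeomorphism of $\Chat$ that is holomorphic on each of $\Omega_1^\pm$ and quasiconformal on each closed half via the given extensions of the $F_i$ and $G_i$. Since $\gamma_1$ is a quasicircle, hence has planar Lebesgue measure zero, the standard gluing lemma upgrades $\Psi$ to a global quasiconformal self-map of $\Chat$ with Beltrami coefficient vanishing a.e. The uniqueness clause of Theorem \ref{th:existuniqBeltrami}, applied with $\omega\equiv 0$, then forces $\Psi$ to be a M\"obius transformation, and $F_2=\Psi\circ F_1$, $G_2=\Psi\circ G_1$.

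The main obstacle is the removability step in the uniqueness argument: to conclude that $\Psi$ is globally quasiconformal one relies on the fact that quasicircles are removable for quasiconformal maps (equivalently, that they have planar Lebesgue measure zero together with the gluing lemma). All remaining steps amount to bookkeeping with the measurable Riemann mapping theorem and the chain rule for Beltrami coefficients.
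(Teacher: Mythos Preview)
Your proof is correct and follows precisely the approach the paper indicates: the paper does not give a detailed proof but states that the result ``follows easily from the existence and uniqueness of solutions to the Beltrami equation,'' and your construction via Theorem~\ref{th:existuniqBeltrami} (extending $h$ quasiconformally to $\disk^-$, solving the Beltrami equation with the resulting dilatation on $\disk^-$ and zero on $\disk^+$, and reading off $F$ and $G$) is exactly the standard argument referenced. Indeed, the paper itself uses this very construction later when defining $F(h)$ for WP-class quasisymmetries.
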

 This theorem is originally due to A. Pfluger \cite{Pfluger}, and is now standard
 in Teichm\"uller theory \cite{Lehto}.
 It follows easily from the existence and uniqueness of solutions to the Beltrami equation. One also sees that in the case that $h$ is a diffeomorphism, $F$ and $G$ in the theorem extend diffeomorphically to an open neighbourhood of the closure of the disk. The conformal welding theorem in the case of diffeomorphisms was also proved independently, some twenty-five years later, by A. A. Kirillov \cite{Kir1}, but he is sometimes (inaccurately) attributed as the originator of the theorem.

\end{subsection}


\begin{subsection}{\teich space}
We summarize here some of the basics of quasiconformal Teichm\"uller theory.  A
 comprehensive treatment can be found for example in the books \cite{Lehto} and \cite{Nagbook}.
 Although we restrict our attention to bordered surfaces of type $(g,n)$ and punctured
 surfaces of type $(g,n)$ to simplify our presentation, the theory applies to the
 Teichm\"uller space of any
 Riemann surface covered by the disk.  The reader should be aware that
 most of the definitions and theorems hold in
 this general case, after a suitable treatment of the boundary and homotopies rel boundary.

Let $\riem$ be a fixed base Riemann surface of border or puncture type $(g,n)$.
Consider the set of triples $(\riem,f_1,\riem_1)$ where $\riem_1$
is a Riemann surface, and $f_1:\riem \rightarrow \riem_1$ is
a quasiconformal mapping. We say that \[ (\riem,f_1,\riem_1)
\sim_T (\riem,f_2,\riem_2) \] if there exists a
biholomorphism $\sigma: \riem_1 \rightarrow \riem_2$ such
that $f_2^{-1} \circ \sigma \circ f_1$ is isotopic to the identity
`rel $\partial \riem$'.  Recall that the term `rel $\partial
\riem$' means that the isotopy is constant on punctures or borders $\partial
\riem$; in particular it is the identity there.

\begin{definition}
 The Teichm\"uller space of a Riemann surface $\riem$
 is
 \[ T(\riem) = \{ (\riem,f_1,\riem_1) \}/\sim_T.  \]
\end{definition}

By Theorem \ref{th:existuniqBeltrami} there is a map
 $\Phi_{\Sigma}:L^\infty_{(-1,1)}(\Sigma)_1 \rightarrow
T(\riem) $ from the space of Beltrami differentials to the Teichm\"uller space, given by mapping a Beltrami differential $\mu
 d\bar{z}/dz$ to the corresponding quasiconformal solution of the
 Beltrami equation. The map $\Phi_{\Sigma}$ is called the \emph{fundamental projection}.

A basic characterising theorem for the Teichm\"uller spaces is the following:
\begin{theorem}
If $\riem^P$ is a punctured surface of type $(g,n)$ with $2g-2+n > 0$, then $T(\riem^P)$ is a $3g-3+n$-dimensional complex manifold.
In the case of a bordered surface $\riem^B$, $T(\riem^B)$ is an infinite-dimensional manifold with
complex structure modeled on a complex Banach space.
\end{theorem}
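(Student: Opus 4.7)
The plan is to realize $T(\riem)$ via the \emph{Bers embedding} as an open subset of a complex vector space of holomorphic quadratic differentials on the ``mirror" surface. Its type---finite-dimensional in the punctured case, a Banach space in the bordered case---then determines the manifold structure.

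First, uniformize. Since $2g-2+n>0$ in the punctured case, and any bordered surface of type $(g,n)$ with $n\geq 1$ is also covered by the disk, we may write $\riem=\disk/\Gamma$ for a torsion-free Fuchsian group $\Gamma$. A Beltrami differential $\mu$ on $\riem$ lifts to a $\Gamma$-invariant $\tilde\mu\in L^\infty(\disk)$ with $\|\tilde\mu\|_\infty<1$; extending $\tilde\mu$ by zero to $\disk^-$ and applying the global form of Theorem \ref{th:existuniqBeltrami} on $\sphere$ produces a normalized quasiconformal homeomorphism $w^\mu:\sphere\to\sphere$ which is conformal on $\disk^-$. The Bers map is
$$
\beta(\mu) \;=\; S(w^\mu)\big|_{\disk^-}, \qquad S(w) \;=\; \Bigl(\frac{w''}{w'}\Bigr)' - \tfrac{1}{2}\Bigl(\frac{w''}{w'}\Bigr)^2,
$$
a $\Gamma$-invariant holomorphic quadratic differential on $\disk^-$.

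Next, I would show $\beta$ descends to an injection of $T(\riem)$ into the complex vector space $Q(\disk^-,\Gamma)$ of $\Gamma$-invariant holomorphic quadratic differentials on $\disk^-$ with finite hyperbolic sup-norm $\|\varphi\|=\sup_{z\in\disk^-}(|z|^2-1)^2|\varphi(z)|$. Injectivity follows from the M\"obius-invariance characterization $S(w_1)=S(w_2)\Leftrightarrow w_1=A\circ w_2$ for some $A\in\mathrm{PSL}(2,\mathbb{C})$, together with the definition of $\sim_T$. The dimension count then splits. In the punctured case $\Gamma$ has cofinite hyperbolic area; elements of $Q(\disk^-,\Gamma)$ descend to meromorphic quadratic differentials on the closed surface with at worst simple poles at the punctures, and Riemann--Roch gives $\dim_{\mathbb{C}}Q(\disk^-,\Gamma)=3g-3+n$. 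In the bordered case the limit set of $\Gamma$ is a proper subset of $\mathbb{S}^1$, so $\Gamma$ has infinite hyperbolic coarea on $\disk^-$, and $Q(\disk^-,\Gamma)$ is an infinite-dimensional complex Banach space.

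To promote $\beta$ to a chart, I invoke the Ahlfors--Weill section, which produces for each $\varphi\in Q(\disk^-,\Gamma)$ of sufficiently small norm an explicit harmonic Beltrami differential $\mu_\varphi$ with $\beta(\mu_\varphi)=\varphi$; this shows $\beta(T(\riem))$ is open near the basepoint. Local charts around any other $[\mu_0]\in T(\riem)$ come from translating the construction to the deformed surface; change of basepoint induces an explicit biholomorphism between the corresponding Bers images, and compatibility of the atlas reduces to the holomorphic dependence of $w^\mu$ on $\mu$ in the Banach sense, due to Ahlfors--Bers. The main obstacle is precisely this pair of analytic facts: the Ahlfors--Weill openness theorem, and the holomorphic dependence of $w^\mu$ on $\mu$ between appropriate Banach spaces, both of which require careful use of the Cauchy transform and $\bar\partial$-solution operators on $\sphere$. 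In the bordered case there is the additional point that Teichm\"uller equivalence ``rel $\partial\riem$" must correspond exactly to the Bers coordinate being identical on $\disk^-$; this relies on Theorem \ref{basicqcextension}, ensuring isotopies fixing $\partial\riem$ correspond to Beltrami deformations whose boundary values extend trivially to quasisymmetries of the limit arcs.
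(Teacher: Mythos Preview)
Your proposal is correct and follows precisely the approach the paper indicates. The paper does not actually prove this theorem: it is a survey and simply attributes the result to Bers \cite{bers-65}, then sketches in the following paragraph the two standard constructions of the complex structure---the Bers embedding via Schwarzian derivatives into the Banach space of $\Gamma$-invariant bounded quadratic differentials, and the construction via local holomorphic sections of the fundamental projection (Ahlfors--Weill or Douady--Earle). Your outline is exactly the Bers-embedding route the paper names first, with the Ahlfors--Weill section supplying openness of the image; the Riemann--Roch count for the punctured case and the second-kind Fuchsian group argument for the bordered case are the standard dimension computations.

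One small remark: you blend the two constructions somewhat, using Ahlfors--Weill to establish openness of the Bers image. That is legitimate, but note that openness of $\beta(T(\riem))$ in $Q(\disk^-,\Gamma)$ can be obtained directly within the Bers framework (via the Nehari--Kraus bound and a perturbation argument), without invoking the section; the paper presents the two constructions as parallel and equivalent rather than interdependent.
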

In fact, the latter claim holds for the Teichm\"uller space of any surface covered
by the disk, by a
theorem of Bers \cite{bers-65}.  The more difficult border case follows from
this general theorem.

There are two main constructions of the complex structure on Teichm\"uller
 space \cite{Lehto,Nagbook}.  One is through the Bers embedding of the Teichm\"uller space (by Schwarzian derivatives) into a space of invariant quadratic differentials on the disk.  This is a global map onto an open subset
 of a Banach space.  The other is by proving the existence of local holomorphic sections of the
 fundamental projection $\Phi$ (e.g. using the Ahlfors-Weill or Douady-Earle reflection), and thus a complex
 structure is obtained from the space of Beltrami differentials.  This is much
 more delicate, since in some sense it must be shown that the sections are transverse
 to the Teichm\"uller equivalence relation; it is closely related to the theory of
 so-called {\it harmonic Beltrami differentials} (which model the tangent space at
 any point).  The two complex structures are equivalent.

The following two facts regarding this complex structure are essential for our
 purposes.
 \begin{theorem}
 The fundamental projection $\Phi_{\Sigma}:L^\infty_{(-1,1)}
 (\Sigma)_1 \rightarrow T(\riem)$ is holomorphic and possesses local holomorphic sections through every point.
 \end{theorem}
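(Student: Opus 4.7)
My plan is to reduce both assertions to the parametric version of the measurable Riemann mapping theorem on the disk, and then invoke the Bers embedding, which by the preceding discussion is what defines the complex structure on $T(\riem)$. First, lift to the universal cover: a Fuchsian group $\Gamma$ uniformizes $\riem$, and elements of $L^\infty_{(-1,1)}(\riem)_1$ correspond to $\Gamma$-invariant Beltrami differentials on $\disk$, which I extend to $\Chat$ by zero on $\disk^-$ (or by reflection, in the equivariant setting needed for bordered surfaces). Solving the Beltrami equation globally on $\Chat$ produces a normalized quasiconformal homeomorphism $w^\mu:\Chat \to \Chat$ which is conformal on $\disk^-$, and the Schwarzian derivative $S(w^\mu|_{\disk^-})$ lands in the Banach space $B_2(\disk^-,\Gamma)$ of $\Gamma$-invariant hyperbolically bounded quadratic differentials. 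Since the Schwarzian is constant on Teichm\"uller equivalence classes, this construction descends to the Bers embedding $\beta : T(\riem) \hookrightarrow B_2(\disk^-,\Gamma)$, and by definition the complex structure on $T(\riem)$ is transported from $B_2(\disk^-,\Gamma)$ via $\beta$; hence $\Phi_\Sigma$ is holomorphic iff the map $\mu \mapsto S(w^\mu|_{\disk^-})$ is.

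The key analytic input is that $\mu \mapsto w^\mu$ depends holomorphically on $\mu$ into an appropriate function space. Writing the normalized solution as $w^\mu(z) = z + \Psi(\mu)(z)$, one has the Neumann expansion
\[ \Psi(\mu) = T\mu + T(\mu \cdot \mathcal{H}T\mu) + T(\mu \cdot \mathcal{H}(\mu \cdot \mathcal{H}T\mu)) + \cdots \]
in terms of the Cauchy transform $T$ and the (unitary on $L^2$) Beurling transform $\mathcal{H}$. Each term is a homogeneous polynomial in $\mu$, and the $L^p$-boundedness of $\mathcal{H}$ for some $p>2$ close to $2$ together with $\|\mu\|_\infty < 1$ yields uniform convergence on $\|\mu\|_\infty \leq k < 1$, so $\mu \mapsto w^\mu$ is holomorphic into a Banach space of Hölder-continuous maps. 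Restriction to $\disk^-$ (where $w^\mu$ is conformal) and passage to the Schwarzian are holomorphic operations into $B_2(\disk^-,\Gamma)$, producing the desired holomorphicity of $\Phi_\Sigma$.

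For local sections I first reduce to a neighbourhood of the basepoint by right-translation in $T(\riem)$, which is a biholomorphism. Near the origin of $B_2(\disk^-,\Gamma)$ the Ahlfors--Weill reflection
\[ s(\phi)(z) = -\tfrac{1}{2}(1-|z|^2)^2 \, \overline{\phi(1/\bar z)} \, \bar z^{-4}, \qquad z \in \disk, \]
is a bounded complex-linear map from a ball in $B_2(\disk^-,\Gamma)$ into $L^\infty_{(-1,1)}(\disk)_1$; the $\Gamma$-equivariance is automatic from the transformation law of $\phi$, so it descends to $\riem$. A direct computation shows $S(w^{s(\phi)}|_{\disk^-}) = \phi$, so $\Phi_\Sigma \circ s = \beta^{-1}$ on a neighbourhood of the origin, furnishing a holomorphic local section through the basepoint and, after translation, through every point.

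The main obstacle is the parametric Ahlfors--Bers theorem underlying the second paragraph: one must establish convergence of the Neumann series in a space strong enough that the restriction to $\disk^-$ and its Schwarzian remain jointly holomorphic in $\mu$. A secondary technical point, in the bordered case, is verifying that Teichm\"uller equivalence rel $\partial\riem$ matches correctly with equivariant Beltrami differentials under the lift; this is handled by working with the canonical double and symmetric Beltrami differentials, so that Bers' proof on the disk transfers to the bordered setting.
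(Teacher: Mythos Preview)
The paper does not actually give a proof of this theorem; it is stated as a classical fact of quasiconformal Teichm\"uller theory, with pointers to Lehto's and Nag's books. The paragraph immediately preceding the statement sketches the two standard constructions of the complex structure---the Bers embedding via Schwarzian derivatives into the space of invariant bounded quadratic differentials, and the production of local holomorphic sections of $\Phi_\Sigma$ via the Ahlfors--Weill (or Douady--Earle) reflection---and records that the two are equivalent; the theorem is then simply asserted.

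Your proposal is a correct outline of exactly this standard argument, and it matches the techniques the paper names but does not carry out: you lift to the Fuchsian model, identify the complex structure on $T(\Sigma)$ with that inherited from $B_2(\disk^-,\Gamma)$ via the Bers embedding, reduce holomorphicity of $\Phi_\Sigma$ to the parametric Ahlfors--Bers theorem (the Neumann series in the Beurling transform), and construct local sections by the Ahlfors--Weill formula followed by right translation. Your treatment of the bordered case via the canonical double is also the standard device. One minor point of phrasing: the Neumann series gives holomorphic dependence of $\partial w^\mu$ in $L^p$, and the passage to holomorphicity of the Schwarzian on $\disk^-$ into $B_2$ is usually done via interior Cauchy estimates rather than by first landing in a H\"older space; but this does not affect the correctness of the scheme. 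In short, there is nothing to compare: the paper offers no proof, and yours is the textbook one it cites.
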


 An important special case is the Teichm\"uller space of the disk.
 \begin{definition}
  The \emph{universal Teichm\"uller space} is the space $T(\mathbb{D^+})$.
 \end{definition}
  The disk $\mathbb{D}^+$ is often replaced with $\mathbb{D}^-$ or the upper half plane.
  Nearly all Riemann surfaces are given by a quotient of the disk by a Fuchsian
  group.  It is possible to model Teichm\"uller spaces of such surfaces as spaces
  of invariant Beltrami differentials or quadratic differentials.  The universal
  Teichm\"uller space gets its name from the fact that
  it contains as open subsets the Teichm\"{u}ller spaces of all Riemann surfaces covered by $\disk^+$.
  We mention in passing that it has been suggested that the universal Teichm\"uller space might serve as a basis for a non-perturbative formulation of closed bosonic string theory  (see \cite{HongRajeev} and \cite{Pekonen} for an overview and references).

   We will see ahead that the
  Teichm\"uller space and the rigged moduli space are nearly the same.  This ``conformal field
  theoretic'' view of Teichm\"uller theory is in some sense very different from the Fuchsian
  group picture; however, classical Teichm\"uller theory, together with our results,
  says that they are two models of the
  same space.  It is therefore of great interest to investigate the relation between the
  algebraic structures obtained from the two pictures.  To our knowledge there are no results
  concerning this relation in the literature.

\end{subsection}


\begin{subsection}{Riemann moduli spaces}
 The \emph{Riemann moduli space} $\mathcal{M}(\riem)$ of a Riemann surface $\riem$ is the space of conformal equivalence classes of Riemann surfaces with the same topological type as $\riem$.
To understand the complex structure on the moduli space, it is necessary to study its universal cover which is the \teich space $T(\riem)$.
The \teich modular group $\tmod(\riem)$ acts holomorphically, and properly discontinuously on $T(\riem)$. In the case of punctured surfaces the modular group is the mapping class group which is finitely generated by Dehn twists. The quotient $T(\riem) / \tmod(\riem)$ is the moduli space, and this exhibits $\mathcal{M}(\riem)$ as an orbifold with complex structure inherited from its universal cover $T(\riem)$.

\end{subsection}


\end{section}

\begin{section}{Teichm\"uller space/rigged moduli space correspondence}
\label{se:Teich_RMS_correspondence}
 If one assumes that the riggings are in the class of quasiconformally extendible mappings / quasisymmetries, then Radnell and Schippers showed in \cite{RadnellSchippers_monster} that the quasiconformal
 Teichm\"uller space is, up to a discrete group action, the rigged moduli space.
 In this section we explain this connection, and some of its consequences for conformal
 field theory.  We also define the Weil-Petersson class rigged moduli and Teichm\"uller
 spaces.
\begin{subsection}{Spaces of riggings and their complex structures}
We describe three classes of riggings in both the border and puncture model.

\begin{subsubsection}{Analytic Riggings}
 We say that $\psi:\mathbb{S}^1 \rightarrow \mathbb{S}^1$ is an analytic diffeomorphism
 if it is the restriction to $\mathbb{S}^1$ of a biholomorphism of an open neighbourhood
 of $\mathbb{S}^1$ to an open neighbourhood of $\mathbb{S}^1$.  Denote the set of
 such maps by $\mathrm{A}(\mathbb{S}^1)$.

  The corresponding
 set of riggings in the puncture model is the set $A(\disk)$ of one-to-one holomorphic
 maps from $\disk$ to $\mathbb{C}$ taking $0$ to $0$ and with a holomorphic one-to-one
 extension to a disk of radius $r >1$.

For a punctured Riemann surface of type $(g,n)$, the corresponding space of riggings
$\mathcal{R}(A(\disk),\riem^P)$ can be given the structure of a complex (LB)-space, which is an inductive limit of Banach spaces \cite[Appendix B]{Huang}. The corresponding moduli space can be given a complex structure \cite{Radnell_thesis}, but this infinite-dimensional structure can not be related to the \teich space of bordered surfaces and this limits the connections between CFT and geometric function theory in general.
\end{subsubsection}

\begin{subsubsection}{Quasisymmetric and quasiconformally-extendible riggings} In this section,
we consider the class of riggings $\mathcal{R}(\qs(\mathbb{S}^1),\riem^B)$ for a
bordered surface $\riem^B$, and the corresponding puncture model.  We now describe the
equivalent class of riggings in the puncture model.
\begin{definition}
Let $\Oqc$ be the set of injective holomorphic maps $\phi : \disk \to \mathbb{C}$ such
that $\phi(0) = 0$ and $\phi$ extends quasiconformally to an open neighborhood of  $\cdisk$ (and thus to all of $\mathbb{C}$).
\end{definition}

The set $\Oqc$ can be identified with an open subset of a Banach space.  Define
\[  A_1^\infty(\mathbb{D}) = \{ \phi(z) \colon \mathbb{D} \rightarrow
 \mathbb{C} \;|\; \phi \text{ holomorphic}, \ ||\phi||_{1,\infty}=
 \sup_{z \in \mathbb{D}}
 (1- |z|^2) | \phi(z)| < \infty \}.  \]
This is a Banach space with the norm given in the definition.
Following \cite{RSnonoverlapping} based on results of L.-P. Teo \cite{Teo_Velling} we now define an embedding of $\Oqc$ into a Banach
space:
\begin{align*}
 \chi \colon \Oqc & \longrightarrow  A_1^\infty(\mathbb{D})
 \oplus \mathbb{C}  \\
 f & \longmapsto  \left( \frac{f''}{f'}, f'(0) \right).
\end{align*}
The Banach space direct-sum norm on
$A_1^\infty(\mathbb{D})\oplus \mathbb{C}$ is defined by
$
||(\phi, c)||=||\phi||_{1,\infty} +
|c|.$
\begin{theorem}[{\cite[Theorem 3.1]{RSnonoverlapping}}] \label{th:chi_induces_complex_structure}
 $\chi$ is one-to-one, and the image is open. In particular, $A_1^\infty(\disk) \oplus \mathbb{C}$
 induces a complex Banach manifold structure on $\Oqc$ via $\chi$.
\end{theorem}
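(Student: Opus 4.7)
The plan is to treat the three assertions—injectivity, mapping into $A_1^\infty(\disk)\oplus\mathbb{C}$, and openness of the image—separately, with openness carrying the real content of the theorem.

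Injectivity is a pre-Schwarzian ODE argument. If $\chi(f) = \chi(g)$, then $(\log(f'/g'))' = f''/f' - g''/g' = 0$, so $f'/g'$ is constant; using $f'(0) = g'(0)$ forces $f' \equiv g'$, and integrating from $0$ with $f(0) = g(0) = 0$ (built into the definition of $\Oqc$) yields $f = g$. For well-definedness, I would appeal to the Koebe-type bound $(1 - |z|^2)|f''(z)/f'(z)| \leq 6$ valid for any univalent $f$ on $\disk$, obtained by applying the classical Koebe distortion theorem to the disk-automorphism conjugates $z \mapsto (f(\varphi_a(z)) - f(a))/((1 - |a|^2)f'(a))$ with $\varphi_a$ the M\"obius automorphism of $\disk$ sending $0$ to $a$. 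This places $f''/f'$ in $A_1^\infty(\disk)$ for every $f \in \Oqc$.

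For openness, the starting point is the candidate inverse
\[ F_{(\phi, c)}(z) = c \int_0^z \exp\!\left( \int_0^w \phi(\zeta)\, d\zeta \right) dw, \]
which is holomorphic on $\disk$, vanishes at $0$ with derivative $c$, and has pre-Schwarzian $\phi$ by construction. The question is precisely when $F_{(\phi,c)}$ lies in $\Oqc$, i.e.\ is univalent with a quasiconformal extension to a neighbourhood of $\cdisk$. Becker's univalence theorem guarantees both properties whenever $\|\phi\|_{1,\infty} < 1$, which already shows that $\chi(\Oqc)$ contains an open Banach-space ball about $\chi(\mathrm{id}_\disk) = (0, 1)$. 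To obtain openness at an arbitrary $f_0 \in \Oqc$, I would pass through the Ahlfors--Bers parametrization: fix a quasiconformal extension $F_0$ of $f_0$ to $\Chat$ with Beltrami coefficient $\mu_0$ on $\disk^- = \Chat \setminus \cdisk$; for $\mu$ near $\mu_0$ in $L^\infty(\disk^-)_1$, the measurable Riemann mapping theorem produces normalized quasiconformal solutions $F^\mu$ of $\bar\partial F = \mu\,\partial F$, and the family $\mu \mapsto f^\mu := F^\mu|_\disk$ is a holomorphic map into $\Oqc$ whose image contains a neighbourhood of $f_0$.

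The main obstacle—and the essential content—is then showing that the composition $\mu \mapsto \chi(f^\mu)$ is a holomorphic submersion at $\mu_0$, i.e.\ that its derivative is surjective onto $A_1^\infty(\disk)\oplus\mathbb{C}$. This is the calculation effectively carried out by Teo in \cite{Teo_Velling}: one differentiates the pre-Schwarzian along the Ahlfors--Bers family, obtains an explicit integral expression for the first-order variation in terms of $\nu = \partial_t\mu|_{t=0}$, and verifies by a density argument that every element of $A_1^\infty(\disk) \oplus \mathbb{C}$ is realized. Once this surjectivity is in hand, the implicit function theorem for holomorphic maps between Banach spaces yields openness of $\chi(\Oqc)$ near $\chi(f_0)$, and the explicit inverse $(\phi, c) \mapsto F_{(\phi, c)}$ constructed above identifies $\chi$ as a global holomorphic chart, furnishing $\Oqc$ with the claimed complex Banach manifold structure.
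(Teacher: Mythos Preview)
The survey under review does not prove this theorem; it simply cites \cite[Theorem~3.1]{RSnonoverlapping} and remarks that the construction is ``based on results of L.-P.~Teo \cite{Teo_Velling}''.  So there is no in-text argument to compare against, and I comment instead on your proposal and on the approach the cited sources indicate.

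Your injectivity argument via the pre-Schwarzian ODE is correct, as is the Koebe-distortion bound $(1-|z|^2)|f''(z)/f'(z)|\le 6$ placing $f''/f'$ in $A_1^\infty(\disk)$.  For openness, however, your route is more circuitous than needed and has a real gap.  You propose Becker near the identity and then, for general $f_0$, an Ahlfors--Bers family $\mu\mapsto f^\mu$ together with a submersion argument.  The entire content of the theorem is then the surjectivity of the differential of $\mu\mapsto\chi(f^\mu)$ onto $A_1^\infty(\disk)\oplus\mathbb{C}$, which you defer to \cite{Teo_Velling} with a one-line sketch.  That step is not a routine verification; producing, for each $\psi\in A_1^\infty(\disk)$, an infinitesimal Beltrami $\nu$ whose induced variation of the pre-Schwarzian equals $\psi$ is essentially equivalent to the openness you are trying to prove.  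A smaller technical point: the tool you want at the end is the Lyusternik--Graves open mapping theorem (surjective Fr\'echet derivative implies local openness), not the implicit function theorem, which would require a complemented kernel you have not supplied.

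The argument actually used in \cite{RSnonoverlapping}, following \cite{Teo_Velling}, is more direct.  One first observes that scaling decouples the two components of $\chi$: since $(cf)''/(cf)'=f''/f'$ while $(cf)'(0)=cf'(0)$ runs over $\mathbb{C}\setminus\{0\}$, one has $\chi(\Oqc)=S\times(\mathbb{C}\setminus\{0\})$ with $S=\{f''/f':f\in\Oqc\}$.  The problem thus reduces to showing $S$ is open in $A_1^\infty(\disk)$, and this is exactly Teo's theorem that the pre-Schwarzian model of the universal Teichm\"uller curve is an open subset of $A_1^\infty(\disk)$.  Quoting that result directly avoids the submersion machinery and isolates precisely where \cite{Teo_Velling} enters.
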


 Two of the authors showed that $\Oqc$-type riggings forms a Banach manifold.
 \begin{theorem}[\cite{RSnonoverlapping}] Let $\riem^P$ be a punctured Riemann surface of type $(g,n)$.
 $\mathcal{R}(\Oqc,\riem^P)$ is a complex Banach manifold locally
 modeled on $\Oqc$.
 \end{theorem}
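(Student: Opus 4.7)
The plan is to build an atlas on $\mathcal{R}(\Oqc,\riem^P)$ whose charts identify neighborhoods of a rigging with open subsets of $\Oqc\times\cdots\times\Oqc$ ($n$ factors), using the Banach manifold structure on $\Oqc$ from Theorem \ref{th:chi_induces_complex_structure}, and then verify that the transition functions are biholomorphic. Fix a representative $(f_1^0,\ldots,f_n^0)\in\mathcal{R}(\Oqc,\riem^P)$. Since the closures $\overline{f_i^0(\disk)}$ are pairwise disjoint compact subsets of $\riem^P$, I can choose biholomorphic coordinates $\zeta_i:U_i\to\mathbb{C}$ on open neighborhoods $U_i\supset\overline{f_i^0(\disk)}$ with $\zeta_i(p_i)=0$ and $\overline{U_i}\cap\overline{U_j}=\emptyset$ for $i\neq j$. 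Define
\[ \Psi : V \longrightarrow \Oqc\times\cdots\times\Oqc, \qquad (f_1,\ldots,f_n) \longmapsto (\zeta_1\circ f_1,\ldots,\zeta_n\circ f_n), \]
where $V$ is the set of riggings with $f_i(\disk)\subset U_i$ for each $i$. The remark after Definition \ref{de:riggings_puncture_model} guarantees each $\zeta_i\circ f_i\in\Oqc$, and the disjointness of the $\overline{U_i}$ makes the non-overlap condition in Definition \ref{de:riggings_puncture_model} automatic. The map $\Psi$ is bijective onto the subset $W\subset\Oqc^n$ cut out by the open conditions $g_i(\disk)\subset\zeta_i(U_i)$; openness of $W$ in $\Oqc^n$ follows from the fact that $\chi(\Oqc)$ is open in $\aoneinfinity\oplus\mathbb{C}$ (Theorem \ref{th:chi_induces_complex_structure}) together with the continuity of the evaluation $g\mapsto g(\disk)$ in a sufficiently strong sense (a standard distortion-theoretic consequence of the $\aoneinfinity$ norm control).

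Next I would verify compatibility of two such charts. If $\tilde\zeta_i$ is a second admissible coordinate at $p_i$ defined on $\tilde U_i$, then on the overlap of the chart domains the transition map acts coordinate-wise as $g_i\mapsto \phi_i\circ g_i$ where $\phi_i:=\tilde\zeta_i\circ\zeta_i^{-1}$ is a biholomorphism between open subsets of $\mathbb{C}$ fixing $0$. Using the embedding $\chi$ and the chain rule,
\[ \chi(\phi\circ g) \;=\; \left(\frac{\phi''(g)}{\phi'(g)}\,g' + \frac{g''}{g'},\; \phi'(0)\,g'(0)\right), \]
so holomorphicity of the transition reduces to holomorphicity of the map $\chi(g)\mapsto\chi(\phi\circ g)$ between open subsets of $\aoneinfinity\oplus\mathbb{C}$. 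The second coordinate is visibly linear in $g'(0)$. For the first coordinate, I would factor the assignment as $g\mapsto((\phi''/\phi')\circ g)\cdot g' + g''/g'$; the second summand is a constant shift in chart coordinates, while the first is built from post-composition of the bounded holomorphic function $\phi''/\phi'$ with $g$ and multiplication by $g'$, both of which are holomorphic in $\chi(g)$ with values in $\aoneinfinity$ by standard composition estimates in hyperbolic/Bloch-type spaces on $\disk$.

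Finally, varying the base rigging gives an atlas covering $\mathcal{R}(\Oqc,\riem^P)$, and Hausdorffness is inherited from the ambient space of tuples of holomorphic maps with the $\aoneinfinity$-induced topology. The main obstacle in the argument is the holomorphicity of post-composition by a fixed local biholomorphism $\phi$ viewed as a map on $\Oqc$; everything else is a bookkeeping exercise with compactness, disjointness of puncture neighborhoods, and the openness statement of Theorem \ref{th:chi_induces_complex_structure}. Once this post-composition holomorphicity is established, the chart compatibility and hence the complex Banach manifold structure follow immediately.
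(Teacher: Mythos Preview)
The survey under review does not actually prove this theorem; it is stated with a citation to \cite{RSnonoverlapping}, so there is no in-paper proof to compare against. That said, your outline is precisely the strategy of the cited reference: one builds charts by composing with local biholomorphic coordinates $\zeta_i$ at the punctures, lands in a product $\Oqc^n$ via the embedding $\chi$ of Theorem~\ref{th:chi_induces_complex_structure}, and then reduces chart compatibility to the holomorphicity of left-composition $g\mapsto \phi\circ g$ on $\Oqc$ for a fixed local biholomorphism $\phi$ fixing $0$. Your chain-rule computation of $\chi(\phi\circ g)$ and your identification of this left-composition step as the only nontrivial analytic input are both correct.

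Two small points to tighten. First, the chart domain should be cut out by the condition $\overline{g_i(\disk)}\subset \zeta_i(U_i)$ rather than $g_i(\disk)\subset \zeta_i(U_i)$, since Definition~\ref{de:riggings_puncture_model} requires a coordinate on a neighbourhood of the \emph{closure} of the image; openness of this stronger condition in the $\chi$-topology is what one actually proves via distortion estimates. Second, your appeal to ``standard composition estimates in hyperbolic/Bloch-type spaces'' for the holomorphicity of $g\mapsto (\phi''/\phi')\circ g\cdot g'$ is exactly where the work lies in \cite{RSnonoverlapping}; it is not difficult but does require writing $g$ and $g'$ holomorphically in terms of $\chi(g)=(g''/g',g'(0))$ (via integration of the pre-Schwarzian), and then checking that composition with the bounded holomorphic function $\phi''/\phi'$ and pointwise multiplication by $g'$ are holomorphic into $A_1^\infty(\disk)$. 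With those two clarifications your plan is complete and matches the original argument.
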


 The following generalization of Theorem \ref{basicqcextension} for boundary curves of a Riemann
 surface holds.    It is based on the conformal welding theorem (Theorem \ref{th:welding}).
 Using Definition \ref{de:riggings_border_model} and Remark \ref{de:riggings_border_model} the notion of a quasisymmetric map from $\mathbb{S}^1$ to $\partial_i \Sigma^B$ is well defined.
 \begin{theorem}[\cite{RadnellSchippers_monster}] \label{rigext}
  Let $\Sigma^B$ be a bordered Riemann surface, with boundary curve $\partial_i \Sigma$
  homeomorphic to $\mathbb{S}^1$.   A map $h:\mathbb{S}^1 \rightarrow \partial_i \Sigma^B$
  is a quasisymmetry if and only if $h$ has a quasiconformal
  extension to a map from an annulus $\mathbb{A}_1^r$ to a collar of $\partial_i \Sigma^B$.
 \end{theorem}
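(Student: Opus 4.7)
The plan is to reduce to the classical case via the collar chart and then apply Theorem \ref{basicqcextension} together with quasiconformal reflection across $\mathbb{S}^1$. Set $g = \zeta_i \circ h : \mathbb{S}^1 \to \mathbb{S}^1$. By Definition \ref{de:riggings_border_model} and Remark \ref{re:riggings_border_model}, $h$ is quasisymmetric iff $g$ is classically quasisymmetric. Post-composition by $\zeta_i$ turns a quasiconformal extension $H : \mathbb{A}_1^{r'} \to$ (collar of $\partial_i \Sigma^B$) of $h$ into a quasiconformal extension $F = \zeta_i \circ H : \mathbb{A}_1^{r'} \to \mathbb{A}_1^{r''}$ of $g$, and conversely, after shrinking $r'$ so that the image of $F$ lies in $\mathbb{A}_1^r$, the map $\zeta_i^{-1} \circ F$ recovers an extension of $h$. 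The claim thus reduces to: $g : \mathbb{S}^1 \to \mathbb{S}^1$ is quasisymmetric iff $g$ extends quasiconformally to some annular neighborhood of $\mathbb{S}^1$.

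For the forward direction, Theorem \ref{basicqcextension} yields a quasiconformal $\phi : \mathbb{D} \to \mathbb{D}$ with $\phi|_{\mathbb{S}^1} = g$. The reflection $\phi^*(z) = 1/\overline{\phi(1/\bar z)}$, obtained by conjugating $\phi$ with the antiholomorphic involution $\iota(z) = 1/\bar z$ of $\Chat$ that interchanges $\mathbb{D}$ and $\mathbb{D}^-$, is quasiconformal on $\mathbb{D}^-$ with the same dilatation, and since $\iota$ fixes $\mathbb{S}^1$ pointwise we have $\phi^*|_{\mathbb{S}^1} = g$. Restricting $\phi^*$ to $\mathbb{A}_1^{r'}$ for $r' > 1$ sufficiently small that $\phi^*(\mathbb{A}_1^{r'}) \subset \mathbb{A}_1^r$ (possible by continuity and $\phi^*(\mathbb{S}^1) = \mathbb{S}^1$) produces the required quasiconformal extension.

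For the reverse direction, apply the same reflection to $F$: setting $F^*(z) = 1/\overline{F(1/\bar z)}$ for $z \in \mathbb{A}_{1/r'}^1$ and gluing along $\mathbb{S}^1$ produces $\hat F : \mathbb{A}_{1/r'}^{r'} \to \mathbb{A}_{1/r''}^{r''}$. Quasiconformality across the seam $\mathbb{S}^1$ is the standard quasiconformal Schwarz reflection: the reflected Beltrami coefficient has the same $L^\infty$-bound as $F$'s and $\mathbb{S}^1$ has planar Lebesgue measure zero, so the pieced-together coefficient is bounded measurable and the ACL property passes across the analytic seam. To deduce quasisymmetry of $g$ via Theorem \ref{basicqcextension}, extend the Beltrami coefficient of $\hat F$ by $0$ to all of $\Chat$ and invoke the Measurable Riemann Mapping Theorem to obtain a normalized quasiconformal $\Phi : \Chat \to \Chat$ that is conformal on $\{|z| < 1/r'\}$ and on $\{|z| > r'\}$. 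Since $\Phi$ and $\hat F$ share the same Beltrami coefficient on $\mathbb{A}_{1/r'}^{r'}$, the map $\chi = \Phi \circ \hat F^{-1}$ is conformal on $\mathbb{A}_{1/r''}^{r''}$, so $\Phi|_{\mathbb{D}} : \mathbb{D} \to \Phi(\mathbb{D})$ is a conformal bijection onto the Jordan domain $\Phi(\mathbb{D})$, whose boundary $\chi(\mathbb{S}^1)$ is a real-analytic curve. Composing $\Phi|_{\mathbb{D}}$ with a Riemann map $R : \Phi(\mathbb{D}) \to \mathbb{D}$, which extends analytically across $\chi(\mathbb{S}^1)$ by the Schwarz reflection principle, yields a conformal self-map of $\mathbb{D}$, hence a M\"obius transformation $M$; the boundary identity $M|_{\mathbb{S}^1} = R \circ \chi \circ g$ gives $g = (R \circ \chi)^{-1} \circ M|_{\mathbb{S}^1}$, a composition of real-analytic diffeomorphisms and a M\"obius transformation of $\mathbb{S}^1$, which is quasisymmetric.

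The main obstacle is the reverse direction, and specifically the passage from a quasiconformal extension to some annular neighborhood of $\mathbb{S}^1$ to a quasiconformal extension to the full disk $\mathbb{D}$ (to which Theorem \ref{basicqcextension} applies directly). The quasiconformal Schwarz reflection across $\mathbb{S}^1$ and the MRMT-based correction $\chi$ jointly accomplish this: the reflection extends $F$ across $\mathbb{S}^1$, and $\chi$ produces the analytic boundary of $\Phi(\mathbb{D})$ that makes the Schwarz reflection principle for the Riemann map $R$ available, ultimately expressing $g$ as a composition of quasisymmetric maps. No new analytic machinery is required beyond Beurling--Ahlfors, the Measurable Riemann Mapping Theorem, and the classical reflection principles.
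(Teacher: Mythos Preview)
The paper does not give a proof of this theorem; it simply cites \cite{RadnellSchippers_monster} and remarks that the argument ``is based on the conformal welding theorem (Theorem~\ref{th:welding}).'' So there is no detailed proof in the paper to compare against, and your reduction via the collar chart together with Beurling--Ahlfors and reflection is a reasonable route. Your forward direction is correct.

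There is, however, a genuine slip in the reverse direction. You write that ``$\Phi|_{\mathbb{D}} : \mathbb{D} \to \Phi(\mathbb{D})$ is a conformal bijection'' and hence that $R \circ \Phi|_{\mathbb{D}}$ is a M\"obius transformation $M$. This is false: you constructed $\Phi$ by solving the Beltrami equation with coefficient equal to that of $\hat F$ on $\mathbb{A}_{1/r'}^{r'}$ and zero elsewhere. On the inner annulus $\mathbb{A}_{1/r'}^{1} \subset \mathbb{D}$ the Beltrami coefficient of $\hat F$ is the reflected coefficient of $F$, which is generically nonzero. Thus $\Phi|_{\mathbb{D}}$ is only quasiconformal, and $R \circ \Phi|_{\mathbb{D}}$ is not M\"obius.

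The repair is immediate and uses exactly the tool you already invoked. The map $R \circ \Phi|_{\mathbb{D}} : \mathbb{D} \to \mathbb{D}$ is a quasiconformal self-map of the disk, so by Theorem~\ref{basicqcextension} its boundary values give a quasisymmetric self-map of $\mathbb{S}^1$. On $\mathbb{S}^1$ you have $\Phi = \chi \circ \hat F = \chi \circ g$, hence
\[
g \;=\; (R \circ \chi)^{-1} \circ (R \circ \Phi)\big|_{\mathbb{S}^1}.
\]
Here $R \circ \chi$ is an analytic diffeomorphism of $\mathbb{S}^1$ (both $\chi$ and $R$ extend conformally across the relevant boundaries, as you already argued), so it is quasisymmetric. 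Since quasisymmetries form a group, $g$ is quasisymmetric. With this correction your argument goes through; the reflection across $\mathbb{S}^1$ is in fact unnecessary for the reverse direction once you are willing to land in a quasidisk rather than $\mathbb{D}$, but it does no harm.
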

 From this it can be shown that the puncture model
 and border model of the rigged moduli spaces are bijective
 as sets (Theorem \ref{th:modulibijection}).  In fact, they are biholomorphic.
\end{subsubsection}

\begin{subsubsection}{$\mathrm{WP}$-class quasisymmetries} We first define this class of riggings in the puncture picture.
The Weil-Petersson class quasisymmetries of the circle were introduced independently
by H. Guo \cite{GuoHui}, G. Cui \cite{Cui} and L. Takhtajan and L.-P. Teo \cite{Takhtajan_Teo_Memoirs}.
In brief, they arise from an attempt to model the Teichm\"uller space on $L^2$ Beltrami
differentials (note that the differentials must be both $L^2$ and $L^\infty$ bounded).
More background is
given in Section \ref{se:WPTeichspace} ahead.

The Bergman space is the Hilbert space
\[  \aonetwo = \left\{ \phi \in \mathcal{H}(\mathbb{D}) : \| \phi\|_2^2 = \iint_{\mathbb{D}}
  |\phi|^2 \, dA < \infty \right\} \]
which is a vector subspace of $\aoneinfinity$. The inclusion map from $\aonetwo$
to $\aoneinfinity$ is bounded \cite[Chapter II Lemma 1.3]{Takhtajan_Teo_Memoirs}.

\begin{definition}  Let
  \[ \Oqco= \left\{f \in \Oqc : f''/f' \in \aonetwo \right\}.  \]
	We call elements of this space WP-class quasiconformally extendible maps of $\disk$.
\end{definition}

\begin{theorem}{\cite[Theorem 2.3]{RSS_Filbert1}}  \label{th:Oqco_open_in_Oqc} The inclusion map from
$\aonetwo \rightarrow \aoneinfinity$ is continuous.  Furthermore
$\chi(\Oqco)$ is an open subset of the Hilbert space $\aonetwo
\oplus \mathbb{C}$ and thus $\Oqco$ inherits a Hilbert manifold structure.
The inclusion map $\iota: \Oqco \rightarrow \Oqc$ is holomorphic.
\end{theorem}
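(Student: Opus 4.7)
The plan is to reduce the three claims to the already-proved Banach-space version, Theorem \ref{th:chi_induces_complex_structure}, by exploiting the continuous inclusion of Bergman spaces.

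For the first claim, the continuity of the inclusion $\aonetwo \hookrightarrow \aoneinfinity$ is a classical Bergman-space estimate: subharmonicity of $|\phi|^2$ (equivalently, the reproducing-kernel inequality) yields a pointwise bound $(1-|z|^2)|\phi(z)| \le C\|\phi\|_2$ for all $\phi \in \aonetwo$, so $\|\phi\|_{1,\infty} \le C\|\phi\|_2$. This is exactly the content cited from Takhtajan--Teo. Consequently the induced map $\aonetwo \oplus \mathbb{C} \hookrightarrow \aoneinfinity \oplus \mathbb{C}$ (identity on the $\mathbb{C}$ factor) is a bounded linear injection, hence continuous and holomorphic.

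For the second claim, I would fix $(\phi,c) \in \chi(\Oqco)$ and argue as follows. By Theorem \ref{th:chi_induces_complex_structure} there exists an $\aoneinfinity$-ball $B$ around $(\phi,c)$ contained in $\chi(\Oqc)$. By the continuity established above, there is an $\aonetwo$-ball $B'$ around $(\phi,c)$ whose image under the inclusion lies in $B$. Now pick any $(\psi,d) \in B'$: on one hand $(\psi,d) \in B \subset \chi(\Oqc)$, so $f \eqdef \chi^{-1}(\psi,d)$ is a well-defined element of $\Oqc$ with $f''/f' = \psi$; on the other hand, by the very choice of $B' \subset \aonetwo \oplus \mathbb{C}$, we have $\psi \in \aonetwo$. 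The defining condition of $\Oqco$ is then satisfied, so $f \in \Oqco$ and $(\psi,d) \in \chi(\Oqco)$. This proves $B' \subset \chi(\Oqco)$, hence openness. The restricted map $\chi|_{\Oqco}$ thus provides a global chart identifying $\Oqco$ with an open subset of the Hilbert space $\aonetwo \oplus \mathbb{C}$, giving the Hilbert manifold structure.

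For the third claim, in the charts $\chi|_{\Oqco}$ on the source and $\chi$ on the target, the inclusion $\iota : \Oqco \to \Oqc$ reads as the restriction of the bounded linear inclusion $\aonetwo \oplus \mathbb{C} \hookrightarrow \aoneinfinity \oplus \mathbb{C}$ to the open set $\chi(\Oqco)$. Bounded linear maps between complex Banach spaces are holomorphic, so $\iota$ is holomorphic. The only nontrivial step is the Bergman-space embedding estimate $\|\phi\|_{1,\infty} \le C\|\phi\|_2$; once that is in hand, the remainder of the argument is essentially a transfer-of-structure using the openness already established in the Banach setting, so I do not expect any serious obstacle.
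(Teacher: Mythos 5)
Your proposal is correct and follows essentially the same route as the cited source \cite{RSS_Filbert1}: the bounded embedding $\aonetwo\hookrightarrow\aoneinfinity$ (the Takhtajan--Teo/Bergman reproducing-kernel estimate), the observation that $\chi(\Oqco)=\chi(\Oqc)\cap(\aonetwo\oplus\mathbb{C})$ is the preimage of the open set $\chi(\Oqc)$ under the continuous inclusion, and holomorphy of $\iota$ as the restriction of a bounded linear map. No gaps.
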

\begin{remark} Although the inclusion map is continuous, the topology of $\Oqco$ is not
the relative topology inherited from $\Oqc$.
\end{remark}

We now define the space of corresponding riggings in the border picture which we denote by $\qso(\mathbb{S}^1)$. Recall Section \ref{se:notation} and Definition \ref{quasisymmetriccircle}. Briefly, a map $h:\mathbb{S}^1 \to \mathbb{S}^1$ is in $\qso(\mathbb{S}^1)$ if the corresponding welding maps (see Theorem \ref{th:welding}) are in $\Oqco$. For $h \in \qs(\mathbb{S}^1)$ let $w_{\mu}(h): \disk^- \to \disk^-$  be a quasiconformal extension of $h$ with dilatation $\mu$ (such an extension exists by the Ahlfors-Beurling extension theorem). Furthermore, let $w^{\mu} : \sphere \to \sphere$
 be the quasiconformal map with dilatation $\mu$ on $\disk^-$
 and $0$ on $\disk^+$, with normalization $w^\mu(0)=0$, ${w^\mu}'(0)=1$ and $w^\mu(\infty)=\infty$ and set
 \[  F(h)=\left. w^\mu \right|_{\disk^+}.  \]  It is a standard fact that $F(h)$ is independent of the choice of extension $w_\mu$.
\begin{definition}
 We define the subset $\qso(\mathbb{S}^1)$ of $\qs(\mathbb{S}^1)$ by
$$
\qso(\mathbb{S}^1) = \{ h \in \qs(\mathbb{S}^1) \,:\,  F(h)  \in \Oqco \},
$$
and call elements of $\qso(\mathbb{S}^1)$ Weil-Petersson (or WP)-class quasisymmetries
\end{definition}

The WP-class quasisymmetries can also be characterized in terms of the existence of
quasiconformal extensions with $L^2$ Beltrami differentials with respect to the
hyperbolic metric.
\begin{theorem}[\cite{Cui,GuoHui, Takhtajan_Teo_Memoirs}]  Let $\phi \in \qs(\mathbb{S}^1)$.
 The following statements are equivalent:
 $(1)$ $\phi \in \qso(\mathbb{S}^1)$; $(2)$
 $\phi$ has a quasiconformal extension to $\disk^-$ $($or $\disk^+$$)$ with Beltrami differential
 $\mu$ satisfying
 \begin{equation} \label{eq:L2hyp_condition}
  \iint_{\mathbb{D}^-} \frac{|\mu(\zeta)|^2}{(1-|\zeta|^2)^2} dA_\zeta < \infty;
 \end{equation} $(3)$ $F(\phi)$ has a quasiconformal extension with Beltrami differential
 satisfying $(\ref{eq:L2hyp_condition})$.
\end{theorem}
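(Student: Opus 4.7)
The plan is to establish the two equivalences $(1) \Leftrightarrow (3)$ and $(2) \Leftrightarrow (3)$ separately; both arguments follow the general strategy developed in \cite{Cui}, \cite{GuoHui} and \cite{Takhtajan_Teo_Memoirs}, so my proposal is to organize those ingredients around the definition of $\qso(\mathbb{S}^1)$ used in this paper.

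For $(2) \Leftrightarrow (3)$ I would first handle the case in which $\phi$ admits a quasiconformal extension $w_\mu: \disk^- \to \disk^-$ with Beltrami differential $\mu$. By the construction preceding the definition of $\qso(\mathbb{S}^1)$, the global map $w^\mu: \sphere \to \sphere$ is quasiconformal, extends $F(\phi)$, and has Beltrami differential equal to $\mu$ on $\disk^-$ and $0$ on $\disk^+$; thus $(2)$ with a $\disk^-$-extension immediately implies $(3)$. Conversely, any quasiconformal extension of $F(\phi)$ to $\sphere$ restricts to a quasiconformal self-map of $\disk^-$ whose boundary values are $\phi$, yielding $(3) \Rightarrow (2)$ and showing that the $L^2$ hyperbolic condition can be tested on the same Beltrami differential. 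To handle the variant in which $\phi$ extends to $\disk^+$, I would invoke the involution $z \mapsto 1/\bar z$, which swaps $\disk^-$ and $\disk^+$ and preserves the hyperbolic area element $(1-|z|^2)^{-2}dA$; composition with it reduces the $\disk^+$-extension case to the $\disk^-$-extension case already handled.

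For $(1) \Leftrightarrow (3)$ I would work with the conformal map $f = F(\phi)$ on $\disk^+$. The implication $(3) \Rightarrow (1)$ is obtained by expressing the pre-Schwarzian $f''/f'$ via an integral representation, e.g.\ by applying Pompeiu's formula to a suitable auxiliary function built from $f$ and a chosen extension, thereby writing $f''/f'$ on $\disk^+$ as an integral of $\mu$ over $\disk^-$ against a standard Cauchy-type kernel, and then bounding the resulting $\aonetwo$-norm by the weighted $L^2$-norm appearing in (\ref{eq:L2hyp_condition}). For $(1) \Rightarrow (3)$ I would construct an explicit extension using the Ahlfors--Weill reflection associated to the Schwarzian derivative $S(f)$, which produces a Beltrami differential on $\disk^-$ given by a universal algebraic expression in $S(f)$. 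The weighted $L^2$-norm of this constructed differential then reduces to a weighted Bergman norm of $S(f)$, which is finite if and only if $f''/f' \in \aonetwo$; the latter equivalence follows from the standard comparison between the Schwarzian- and pre-Schwarzian-based Bergman norms for conformal maps of $\disk$ admitting a quasiconformal extension.

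The main obstacle will be the quantitative back-and-forth in $(1) \Leftrightarrow (3)$ between pre-Schwarzian data in $\aonetwo$ and the hyperbolic $L^2$-norm of a Beltrami differential on $\disk^-$. The Ahlfors--Weill reflection gives a clean expression for $\mu$ in terms of $S(f)$, but closing the equivalence in both directions requires sharp estimates: one must verify that the weighted $L^2$-norm of the reflected differential is controlled by $\|f''/f'\|_{\aonetwo}^2$ with constants uniform over $\Oqc$ (to handle the nonlinear change of variables induced by $f$), and in the converse direction one must ensure that the integral representation used to recover $f''/f'$ from $\mu$ respects the Bergman norm. Once these analytic estimates are in place and combined with the symmetry argument for $(2) \Leftrightarrow (3)$, the three characterizations coalesce into the stated equivalence.
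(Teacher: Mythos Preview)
The paper does not give its own proof of this theorem; it is stated with attribution to \cite{Cui,GuoHui,Takhtajan_Teo_Memoirs} and no argument is supplied. So there is no ``paper's proof'' to compare against, and your outline is essentially a reconstruction of what one finds in those references.

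That said, your sketch of $(3)\Rightarrow(2)$ contains a genuine slip. A quasiconformal extension $\tilde w$ of $F(\phi)$ to $\sphere$ does \emph{not} restrict to a self-map of $\disk^-$: since $F(\phi)$ carries $\mathbb{S}^1$ onto a quasicircle $\partial\Omega^+$, the restriction $\tilde w|_{\disk^-}$ maps $\disk^-$ onto the complementary domain $\Omega^-$, and its boundary values on $\mathbb{S}^1$ are those of $F(\phi)$, not $\phi$. The repair is to bring in the second welding map $G:\disk^-\to\Omega^-$ from Theorem~\ref{th:welding} and consider $G^{-1}\circ\tilde w|_{\disk^-}$; this \emph{is} a quasiconformal self-map of $\disk^-$, it has the same Beltrami differential as $\tilde w$ on $\disk^-$ (since $G$ is conformal), and on $\mathbb{S}^1$ it equals $G^{-1}\circ F|_{\mathbb{S}^1}=\phi$ by the welding relation. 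With this correction your $(2)\Leftrightarrow(3)$ argument goes through.

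For $(1)\Rightarrow(3)$ you should also be aware that the Ahlfors--Weill reflection produces a quasiconformal extension only when the hyperbolic sup-norm of the Schwarzian is below the threshold $2$; for a general $f\in\Oqco$ this need not hold. The cited references handle this either by using the Douady--Earle extension together with integral estimates (Cui, Guo), or by first establishing the group structure and reducing to a neighbourhood of the identity where Ahlfors--Weill applies (Takhtajan--Teo). Your plan is on the right track, but you will need one of these devices to close the argument globally.
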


Finally, we mention the following result which seems to be proven for the first time by G. Cui \cite{Cui}.
\begin{theorem}
 \label{th:WP_topological_group}
 The $\mathrm{WP}$-class quasisymmetries $\qs_{\mathrm{WP}}(\mathbb{S}^1)$ are a topological group under composition.
\end{theorem}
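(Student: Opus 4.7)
The plan is to verify three statements: (a) $\qso(\mathbb{S}^1)$ is closed under composition, (b) it is closed under inversion, and (c) composition and inversion are jointly continuous in the topology inherited from the Hilbert manifold structure on $\Oqco$ via the welding map $F$. For all three assertions the natural framework is the characterization recorded in the preceding theorem: $h \in \qso(\mathbb{S}^1)$ if and only if some (equivalently, every) quasiconformal extension $H$ of $h$ to $\disk^-$ has Beltrami coefficient $\mu_H$ satisfying
\[
\|\mu_H\|_{\mathrm{hyp}}^2 \,:=\, \iint_{\disk^-} \frac{|\mu_H(\zeta)|^2}{(1-|\zeta|^2)^2}\, dA_\zeta \,<\, \infty.
\]

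First I would establish closure under composition. Pick quasiconformal extensions $H_1, H_2$ of $h_1, h_2 \in \qso(\mathbb{S}^1)$; then $H_1 \circ H_2$ extends $h_1 \circ h_2$, and the standard composition formula gives its Beltrami coefficient as a bounded algebraic combination of $\mu_2(z)$ and $(\mu_1 \circ H_2)(z)\,\overline{(H_2)_z}/(H_2)_z$, with modulus dominated pointwise by $|\mu_2(z)| + |\mu_1(H_2(z))|$ up to a constant depending on $\|\mu_1\|_\infty$ and $\|\mu_2\|_\infty$. The first summand is directly in hyperbolic $L^2$; for the second I would invoke the Mori-type fact that a $K$-quasiconformal self-map of $\disk^-$ is a quasi-isometry of the hyperbolic metric, so that the change of variables $w = H_2(z)$ gives
\[
\iint_{\disk^-} \frac{|\mu_1(H_2(z))|^2}{(1-|z|^2)^2}\, dA_z \,\le\, C(K_{H_2}) \iint_{\disk^-} \frac{|\mu_1(w)|^2}{(1-|w|^2)^2}\, dA_w.
\]
Closure under inversion is analogous: since $|\mu_{H^{-1}}(H(z))| = |\mu_H(z)|$, the substitution $w = H(z)$ together with the same distortion bounds identifies $\|\mu_{H^{-1}}\|_{\mathrm{hyp}}$ with $\|\mu_H\|_{\mathrm{hyp}}$ up to a multiplicative constant depending only on $K_H$.

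For the topological statements I would transport the problem to $\Oqco$ through $F$ and use continuous dependence of normalized solutions of the Beltrami equation on the coefficient. The key tool is a choice of extension $h \mapsto H_h$ which varies continuously with $h$ in the hyperbolic $L^2$ norm; the Ahlfors--Weill or Douady--Earle section provides such a choice, and its compatibility with the WP-class is exactly the holomorphicity of the fundamental projection restricted to $\qso(\mathbb{S}^1)$ established by Takhtajan and Teo. Once a continuous section is fixed, the composition formula for Beltrami coefficients together with the distortion estimates in the previous paragraph show that $(h_1, h_2) \mapsto h_1 \circ h_2$ and $h \mapsto h^{-1}$ induce continuous maps on the Beltrami side, hence on $\qso(\mathbb{S}^1)$ through $F$ and $\chi$.

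The main obstacle is the continuity assertion. The algebraic closure under composition is essentially a bounded change-of-variable computation, whereas continuity demands controlling the hyperbolic $L^2$ norm of differences such as $\mu_1 \circ H_2^{(n)} - \mu_1 \circ H_2$ as $H_2^{(n)} \to H_2$. This requires a continuous choice of extension combined with a dominated-convergence style argument on $\disk^-$ with the hyperbolic weight; pointwise a.e.\ convergence of the normalized quasiconformal solutions, combined with the uniform $C(K)$ distortion bound above for the dominating function, is what makes the estimate close. Carrying this out carefully, as in Cui's original argument, is where essentially all of the analytic work resides.
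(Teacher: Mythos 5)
The paper itself offers no proof of this statement: it is quoted from Cui \cite{Cui}, so there is no in-paper argument to compare against, and your proposal must stand on its own. Its overall shape (closure under composition and inversion via the Beltrami composition formula, then continuity via a continuous choice of extension) is the standard one, but there is a genuine gap at the central estimate. The displayed change-of-variables inequality
\[
\iint_{\disk^-}\frac{|\mu_1(H_2(z))|^2}{(1-|z|^2)^2}\,dA_z\;\le\;C(K_{H_2})\iint_{\disk^-}\frac{|\mu_1(w)|^2}{(1-|w|^2)^2}\,dA_w
\]
does not follow from the fact that a $K$-quasiconformal self-map of $\disk^-$ is a quasi-isometry of the hyperbolic metric. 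Unwinding the substitution $w=H_2(z)$, the inequality is equivalent to the pointwise a.e.\ Jacobian bound $J_{H_2}(z)\ge c\,(1-|H_2(z)|^2)^2/(1-|z|^2)^2$, i.e., to a comparison of the pulled-back hyperbolic area form with the hyperbolic area form. A quasi-isometry statement controls distances, hence at best the hyperbolic areas of preimages of unit balls, but not the pointwise Jacobian; by Astala-type area distortion a $K$-quasiconformal map can inflate the relative measure of a small set by a power $1/K$, so for $\mu_1$ concentrated on such a set the inequality fails for a general quasiconformal extension $H_2$. The same defect affects the inversion step, which needs the reverse pointwise bound $J_{H}(z)\le C\,(1-|H(z)|^2)^2/(1-|z|^2)^2$.

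The repair --- and this is where essentially all of the content of Cui's proof lies --- is to use not an arbitrary $L^2$ extension of $h_2$ but a specific one with pointwise hyperbolic derivative bounds, e.g.\ the Douady--Earle extension $E(h_2)$, which is a diffeomorphism satisfying $J_{E(h_2)}(z)\asymp (1-|E(h_2)(z)|^2)^2/(1-|z|^2)^2$ with constants depending only on the quasisymmetry constant. But then you must prove separately that $E(h_2)$ has hyperbolically $L^2$ Beltrami coefficient whenever $h_2\in\qso(\mathbb{S}^1)$; the equivalence you quote only asserts the existence of \emph{some} $L^2$ extension and does not give you this. Your continuity paragraph already leans on the same conformally natural section, so once that lemma is in place the dominated-convergence scheme you sketch is the right mechanism; without it, both the algebraic closure and the continuity arguments are incomplete.
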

\begin{remark}  \label{re:quasisymmetries_not_group}
 The quasisymmetries do not form a topological group; composition on the left
 is continuous but composition on the right is not \cite{Lehto}.
\end{remark}

The authors showed that many of these results extend to bordered surfaces of type $(g,n)$.
One such result is the following.
\begin{theorem} \cite{RSS_Filbert1} Let $\riem^P$ be a punctured Riemann surface of type $(g,n)$.
 $\mathcal{R}(\Oqc_{\mathrm{WP}},\riem^P)$ is a Hilbert manifold locally modelled on
 $\Oqc_{\mathrm{WP}}$.
\end{theorem}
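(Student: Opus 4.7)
The plan is to imitate the construction showing that $\mathcal{R}(\Oqc,\riem^P)$ is a Banach manifold locally modelled on $\Oqc$, substituting $\Oqco$ for $\Oqc$ throughout and working in the Hilbert category. Charts will be built by composing riggings with local biholomorphic coordinates at each puncture, and the whole question is reduced to showing that left composition by a biholomorphism is a holomorphic self-map of $\Oqco$.

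First I fix a basepoint $(f_1,\ldots,f_n) \in \mathcal{R}(\Oqco,\riem^P)$. For each $i$, choose a local biholomorphic coordinate $\zeta_i$ defined on an open neighbourhood $U_i$ of $\overline{f_i(\disk)}$ with $\zeta_i(p_i)=0$. Let $V$ be the set of $(g_1,\ldots,g_n) \in \mathcal{R}(\Oqco,\riem^P)$ whose component closures $\overline{g_i(\disk)}$ all lie in the corresponding $U_i$, and define
\[
\Psi\colon V \longrightarrow (\Oqco)^n, \qquad \Psi(g_1,\ldots,g_n) = (\zeta_1 \circ g_1,\,\ldots,\, \zeta_n \circ g_n).
\]
The remark following Definition \ref{de:riggings_puncture_model} guarantees that each $\zeta_i \circ g_i$ lies in $\Oqco$; injectivity is immediate since each $\zeta_i$ is a biholomorphism onto its image. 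The image is open in $(\Oqco)^n$ because any $(h_1,\ldots,h_n)$ sufficiently close to $\Psi(f_1,\ldots,f_n)$ has $\overline{h_i(\disk)} \subset \zeta_i(U_i)$ (the Hilbert topology on $\Oqco$ controls $\|\cdot\|_{1,\infty}$ via Theorem \ref{th:Oqco_open_in_Oqc}, which in turn controls uniform-on-$\cdisk$ behaviour by the known Banach-manifold structure on $\Oqc$), and the non-overlapping condition is open under such convergence.

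To verify that $\Psi$ defines a Hilbert manifold atlas, I must show that transitions are biholomorphic. These reduce to showing that left composition with a biholomorphism $\sigma$ defined on a neighbourhood of $\overline{\phi(\disk)}$ is a holomorphic self-map $L_\sigma\colon \phi \mapsto \sigma \circ \phi$ on a neighbourhood of $\phi$ in $\Oqco$. Passing to the $\chi$-coordinates of Theorem \ref{th:chi_induces_complex_structure}, the chain rule gives
\[
\frac{(\sigma \circ \phi)''}{(\sigma \circ \phi)'} = \frac{\phi''}{\phi'} + \left( \frac{\sigma''}{\sigma'} \circ \phi \right) \phi',
\]
together with $(\sigma \circ \phi)'(0) = \sigma'(0)\phi'(0)$. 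The first summand is the identity in the $\aonetwo$-coordinate and the $\CC$-component is a polynomial in $\phi'(0)$, so everything is reduced to showing that the nonlinear operator $\phi \mapsto (\sigma''/\sigma' \circ \phi)\,\phi'$ is a holomorphic map into $\aonetwo$.

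The main obstacle is exactly this holomorphicity of the composition-and-multiplication operator in the Bergman norm. For continuity, since $\phi$ is univalent and $\sigma''/\sigma'$ is bounded holomorphic on a fixed neighbourhood of $\overline{\phi(\disk)}$, a change-of-variables computation controls $\| (\sigma''/\sigma' \circ \phi)\phi' \|_{\aonetwo}$ by $\sup|\sigma''/\sigma'|^2 \cdot \mathrm{Area}(\phi(\disk))$, which is finite and varies continuously in $\phi$. Analytic dependence then follows either by Taylor-expanding $\sigma''/\sigma'$ on a fixed compact neighbourhood containing $\overline{\phi(\disk)}$ and verifying term-by-term convergence of the resulting polynomial operators in $\aonetwo$, or more economically by invoking that $L_\sigma$ is already known to be holomorphic on the Banach manifold $\Oqc$ and transferring holomorphicity to the Hilbert structure by Theorem \ref{th:Oqco_open_in_Oqc} together with the preservation of $\Oqco$ under $L_\sigma$ established via the above norm bound. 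Once holomorphicity of transitions is in hand, the charts $\Psi$ endow $\mathcal{R}(\Oqco,\riem^P)$ with a Hilbert manifold structure locally modelled on $\Oqco$.
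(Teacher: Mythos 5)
The paper itself only states this result with a citation to the external reference and gives no proof, but your strategy --- charts $(g_1,\dots,g_n)\mapsto(\zeta_1\circ g_1,\dots,\zeta_n\circ g_n)$ built from local biholomorphic coordinates at the punctures, with everything reduced to holomorphy of the left-composition operator $L_\sigma\colon\phi\mapsto\sigma\circ\phi$ on $\Oqco$ via the cocycle identity for the pre-Schwarzian --- is exactly the strategy of the cited work and of its Banach predecessor for $\Oqc$. The chart construction, the openness of the image of $\Psi$, the chain-rule identity, and the change-of-variables bound
\[
\Bigl\| \bigl(\tfrac{\sigma''}{\sigma'}\circ\phi\bigr)\,\phi' \Bigr\|_{\aonetwo}^2
= \iint_{\phi(\disk)} \Bigl|\tfrac{\sigma''}{\sigma'}\Bigr|^2\,dA
\le \sup\Bigl|\tfrac{\sigma''}{\sigma'}\Bigr|^2\cdot \mathrm{Area}(\phi(\disk))
\]
are all correct and are the right ingredients.

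The one step stated too quickly is your ``more economical'' route to holomorphy of $L_\sigma$ in the Hilbert structure. Knowing that $\iota\circ L_\sigma$ is holomorphic into $\Oqc$ (where $\iota\colon\Oqco\to\Oqc$ is the holomorphic inclusion of Theorem \ref{th:Oqco_open_in_Oqc}) and that $L_\sigma$ preserves $\Oqco$ does \emph{not} by itself yield holomorphy of $L_\sigma\colon\Oqco\to\Oqco$: the inclusion points the wrong way for any composition argument, and a map can be holomorphic into the larger Banach space while failing even to be continuous into the smaller Hilbert space. What rescues the argument is precisely your norm bound: it shows $L_\sigma$ is \emph{locally bounded} in the $\aonetwo$-norm (since $\mathrm{Area}(\phi(\disk))$ is locally uniformly bounded on a neighbourhood of each basepoint), and a locally bounded map that is weakly holomorphic with respect to a point-separating family of functionals --- here the functionals on $\aonetwo$ obtained by restricting those of $\aoneinfinity$, which separate points because the inclusion is injective --- is holomorphic. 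You must invoke this principle explicitly (or, equivalently, verify Gateaux holomorphy directly in $\aonetwo$ and combine it with local boundedness); as written, ``transferring holomorphicity by Theorem \ref{th:Oqco_open_in_Oqc}'' is a non sequitur. Your first route, Taylor-expanding $\sigma''/\sigma'$ on a compact neighbourhood of $\overline{\phi(\disk)}$ and summing the resulting polynomial operators in $\aonetwo$, is essentially the concrete verification of that Gateaux holomorphy. With this step made explicit the proof is complete and matches the approach of the cited reference.
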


  The chain of containments for the riggings is as follows:
  \[  \mathrm{A}(\mathbb{S}^1) \subsetneq \mathrm{QS}_{\mathrm{WP}}(\mathbb{S}^1)
  \subsetneq \mathrm{QS}(\mathbb{S}^1) \subsetneq
      \mathrm{Homeo}(\mathbb{S}^1)  \]
 which implies that
 \[  \mathcal{R}(\mathrm{A}(\mathbb{S}^1),\riem^B) \subsetneq \mathcal{R}(\mathrm{QS}_{\mathrm{WP}}(\mathbb{S}^1),\riem^B)
  \subsetneq \mathcal{R}(\mathrm{QS}(\mathbb{S}^1),\riem^B)  \]
  and similarly for the puncture model of rigged moduli space.
\end{subsubsection}

\end{subsection}
\begin{subsection}{The CFT rigged moduli space is \teich space}  \label{se:sub_correspondence}
Two of the authors \cite{RadnellSchippers_monster} showed that the rigged moduli space $\widetilde{M}^B(\riem^B)$ introduced by CFT is in fact  equal to the classical $T^B(\riem^B)$ quotiented by a holomorphic discrete group action. We strongly emphasize that this is only true when quasisymmetric parametrizations are used.  There is no Teichm\"uller space of bordered
surfaces based on diffeomorphisms or analytic parametrizations in the literature.

The basic idea is the following. Let $\riem^B$ be the fixed bordered base surface for the \teich space $T^B(\riem^B)$. Now fix a base parametrization $\tau = (\tau_1, \ldots, \tau_n)$, where $ \tau_i: \mathbb{S}^1 \to \partial_i  \riem$ are quasisymmetric. Take any $[\riem^B, f, \riem^B_1] \in T^B(\riem^B)$. The composition  $\tau \circ f|_{\partial \riem}$ is a parametrization of $\riem^B_1$. Because $f$ is quasiconformal its boundary values are quasisymmetric and
so $f \circ \tau|_{\mathbb{S}^1}$ is again quasisymmetric.

In order to forget the data of the marking maps in \teich space we need to quotient by a certain mapping class group. Define $\mathrm{PModI}(\riem^B)$ to be the group of equivalence classes
of quasiconformal maps from $\riem^B$ which are the identity on $\partial \riem^B$.  Two
such quasiconformal maps are equivalent if they are homotopic via a homotopy which is
constant on the boundary.  The mapping class group acts on the Teichm\"uller space via
composition: for $[\rho] \in \mathrm{PModI}(\riem^B)$,
$[\rho][\riem^B,f,\riem_1^B] = [\riem^B,f \circ \rho^{-1},\riem_1^B]$.

After taking the quotient by $\mathrm{PModI}(\riem^B)$,
the information of the marking map on the boundary which remains is precisely the data of the parametrization.
\begin{theorem}{\cite[Theorems 5.2, 5.3]{RadnellSchippers_monster}}  \label{th:correspondence}
The mapping class group $\mathrm{PModI}(\riem^B)$ acts properly discontinuously and fixed-point freely by biholomorphisms on $T^B(\riem^B)$.
The quotient is $\widetilde{M}^B(\riem^B)$, and so this rigged moduli space inherits the structure of an infinite-dimensional complex Banach manifold from $T^B(\riem^B)$.
\end{theorem}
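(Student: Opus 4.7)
The plan is to construct an explicit bijection
\[
\Psi: T^B(\riem^B)/\mathrm{PModI}(\riem^B) \longrightarrow \widetilde{M}^B(\riem^B)
\]
and then transport the complex structure. Fix a base parametrization $\tau = (\tau_1,\ldots,\tau_n)$ of $\partial\riem^B$ by quasisymmetries and send the Teichm\"uller class $[\riem^B,f,\riem_1^B]$ to the rigged class of $(\riem_1^B, f|_{\partial\riem^B}\circ\tau)$. Since quasiconformal maps have quasisymmetric boundary values on analytic boundary curves (the surface version of Theorem \ref{rigext}), the resulting rigging lies in the required class. A Teichm\"uller equivalence $f_2^{-1}\sigma f_1 \sim \mathrm{id}$ rel boundary forces $\sigma\circ f_1 = f_2$ on $\partial\riem^B$, so $\Psi$ is well defined; invariance under $\mathrm{PModI}(\riem^B)$ is immediate because any representative $\rho$ of such a class is the identity on $\partial\riem^B$.

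Next I would verify that the induced map on the quotient is bijective. For surjectivity, given $(\riem_1^B,\phi)$, fix any quasiconformal $g:\riem^B\to\riem_1^B$ (possible since the two surfaces share the same topological type); then $g^{-1}\circ\phi\circ\tau^{-1}$ is a quasisymmetry of each boundary curve of $\riem^B$, which by Theorem \ref{rigext} extends quasiconformally into a collar. Tapering this extension to the identity across a smooth cut-off curve produces a quasiconformal self-map $\hat\psi$ of $\riem^B$, and $f := g\circ\hat\psi$ maps to the class of $(\riem_1^B,\phi)$. For injectivity of the quotient map, equal images yield a biholomorphism $\sigma:\riem_1^B\to\riem_2^B$ with $\sigma\circ f_1 = f_2$ on $\partial\riem^B$; then $\rho := f_2^{-1}\sigma f_1$ is a quasiconformal self-map of $\riem^B$ fixing the boundary pointwise, hence an element of $\mathrm{PModI}$ carrying one Teichm\"uller class to the other. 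The action by biholomorphisms is the standard consequence of the Bers embedding applied to the full modular group, restricted to the subgroup $\mathrm{PModI}$. Fixed-point freeness uses Schwarz reflection: a biholomorphism $\sigma$ of $\riem_1^B$ that is the identity on the analytic curve $\partial\riem_1^B$ extends across this curve to the Schottky double, and the identity principle forces $\sigma = \mathrm{id}$, collapsing any hypothetical fixed point to the class $[\rho] = 1$ in $\mathrm{PModI}$.

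The main obstacle is proper discontinuity, and I would approach it through the forgetful fibration of $T^B(\riem^B)$ over the finite-dimensional Teichm\"uller space $T^P(\riem^P)$ of the surface with each boundary replaced by a puncture. On the base, proper discontinuity of the ordinary mapping class group is classical, and one analyses the $\mathrm{PModI}$-action on the infinite-dimensional fibers of quasisymmetric riggings separately. The core difficulty is to rule out accumulation of orbits on a single fiber: a hypothetically convergent sequence of orbit points would produce a sequence of nontrivial mapping classes realized by quasiconformal self-maps whose Beltrami differentials tend to zero in $L^\infty$, and one has to argue---typically by adapting length-function or Fenchel--Nielsen style estimates to the quasisymmetric boundary data, as carried out in \cite{RadnellSchippers_monster}---that such a sequence must eventually be trivial. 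Once a free, holomorphic, properly discontinuous action has been established, the Banach manifold structure on $T^B(\riem^B)$ descends to $\widetilde{M}^B(\riem^B)$ by the standard covering-space argument.
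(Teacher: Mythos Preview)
The paper under review is a survey and does not itself prove this theorem; it only sketches the map $[\riem^B,f,\riem_1^B]\mapsto(\riem_1^B,\,f|_{\partial\riem^B}\circ\tau)$ and then cites \cite[Theorems 5.2, 5.3]{RadnellSchippers_monster} for the result. Your outline reproduces exactly that sketch for the construction of $\Psi$, and your arguments for well-definedness, $\mathrm{PModI}$-invariance, surjectivity, injectivity, biholomorphicity, and fixed-point freeness are correct and are the standard ones. So on the parts where a comparison is possible, your approach coincides with the paper's.

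One caution on your proper-discontinuity sketch: you invoke the fibration $T^B(\riem^B)\to T(\riem^P)$, but in this survey that fibration (Theorem~\ref{th:fiber_structure}) is presented as a \emph{consequence} of Theorem~\ref{th:correspondence}. As a purely set-theoretic or topological map (cap off and forget the marking on the boundary) it can be defined independently, so there is no genuine circularity, but you should make that independence explicit rather than cite the holomorphic fiber theorem. Beyond that, your identification of proper discontinuity as the nontrivial step, and your deferral to \cite{RadnellSchippers_monster} for its proof, is consistent with how the survey treats it.
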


It will be shown ahead that the puncture model of rigged moduli space is bijective
 to the border model, (Theorem \ref{th:modulibijection}), so it also inherits a complex Banach
 manifold structure. Finally, we observe the following consequence of the Theorem \ref{th:correspondence} for Teichm\"uller theory.
 \begin{theorem}[\cite{RadnellSchippers_fiber}] \label{th:fiber_structure}
  Let $\riem^B$ be a bordered Riemann surface of type $(g,n)$ and let
  $\riem^P$ be a punctured surface of type $(g,n)$ obtained by sewing on $n$ punctured
  disks.  The Teichm\"uller space $T(\riem^B)$ is a holomorphic fiber space over the (finite-dimensional) base $T(\riem^P)$.  The fibre over any point $[\riem^P,f_1,\riem_1^P]
   \in T(\riem^P)$ is $\mathcal{R}(\Oqc,\riem_1^P)$.  Furthermore, the complex structure
   of $\mathcal{R}(\Oqc,\riem_1^P)$ agrees with the restriction of the complex structure of the
   total space $T(\riem^B)$.
 \end{theorem}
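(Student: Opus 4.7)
The plan is to construct an explicit forgetful projection $\pi : T(\riem^B) \to T(\riem^P)$, show it has the claimed fibres, and then build local holomorphic trivializations exhibiting it as a holomorphic fibre space. To define $\pi$, take $[\riem^B,f,\riem^B_1] \in T(\riem^B)$. The boundary values of $f$ on each boundary curve are quasisymmetric by Theorem \ref{rigext}, so applying the conformal welding theorem (Theorem \ref{th:welding}) one can glue punctured disks to $\riem_1^B$ along the boundary, obtaining a punctured surface $\riem_1^P$ together with a quasiconformal extension $\hat{f}:\riem^P \to \riem_1^P$ of $f$ (the punctured disks sewn on the base are fixed as part of the setup turning $\riem^B$ into $\riem^P$). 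Define $\pi([\riem^B,f,\riem^B_1]) = [\riem^P,\hat f,\riem^P_1]$. First I would verify well-definedness: if $[\riem^B,f,\riem^B_1] \sim_T [\riem^B,g,\riem^B_2]$ via a biholomorphism $\sigma$ of the bordered surfaces together with an isotopy rel $\partial\riem^B$, then $\sigma$ extends to a biholomorphism of the punctured surfaces and the isotopy extends to one fixing the punctures, so the images agree in $T(\riem^P)$.

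Next I would identify the set-theoretic fibre over a point $[\riem^P, f_1, \riem_1^P]$. Any preimage is represented by a bordered surface $\riem_1^B$ sitting inside $\riem_1^P$ together with the data of $n$ conformal maps from the punctured disks into $\riem_1^P$ whose boundary values give the rigging. Cutting $\riem_1^P$ along the boundary curves of $\riem_1^B$ yields exactly a collection of embedded disks around the punctures; the corresponding conformal maps from $\disk$ form an element of $\mathcal{R}(\Oqc,\riem_1^P)$, where the class $\Oqc$ appears precisely because the welding gluing a quasisymmetry gives conformal maps with quasiconformal extensions. Conversely, any rigging $(f_1,\ldots,f_n) \in \mathcal{R}(\Oqc,\riem_1^P)$ produces a bordered surface by removing the images of smaller disks; the Teichm\"uller equivalence on $T(\riem^B)$ restricted to the fibre precisely matches the equivalence defining $\mathcal{R}(\Oqc,\riem_1^P)$. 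This gives a bijection of the fibre with $\mathcal{R}(\Oqc,\riem_1^P)$.

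For holomorphicity and the fibre bundle structure, I would work through the fundamental projection $\Phi_{\riem^B}$. Given a point $[\riem^B,f,\riem^B_1]$, pick a local holomorphic section of $\Phi_{\riem^P}$ through the image point, using harmonic (or Bers) representatives for the finite-dimensional base. A Beltrami differential on $\riem^P$ can be pulled back, extended by zero outside a neighbourhood of the sewn-in disks, and transported to $\riem^B$ to give a holomorphic local section of $\pi$. Combined with a local chart for $\mathcal{R}(\Oqc,\riem_1^P)$ inherited from the Banach manifold structure of $\Oqc$ (Theorem \ref{th:chi_induces_complex_structure}), one obtains local trivializations $U \times V \to T(\riem^B)$ whose composition with $\pi$ is projection onto $U$. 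Holomorphicity of these trivializations follows from holomorphic dependence of the welding on quasisymmetric boundary data, together with the holomorphicity of $\Phi_{\riem^B}$ and its sections.

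The main obstacle is the final compatibility statement: that the complex structure on the fibre inherited as a submanifold of $T(\riem^B)$ coincides with the intrinsic Banach manifold structure on $\mathcal{R}(\Oqc,\riem_1^P)$ coming from $\chi$. My approach would be to restrict the local trivialization above to a single fibre, obtaining a biholomorphism between an open subset of $\mathcal{R}(\Oqc,\riem_1^P)$ (in its intrinsic structure) and an open subset of the fibre (in the structure restricted from $T(\riem^B)$). The verification that this map is biholomorphic comes down to showing that the map sending a rigging to the Beltrami coefficient of the quasiconformal extension in a fixed collar, modulo the Teichm\"uller relation, is holomorphic in both directions -- essentially a parametrized version of the fact that the welding map $h \mapsto F(h)$ is holomorphic between appropriate Banach manifolds. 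That last parametrized holomorphy is the analytically delicate step, and it is where the technology of quasiconformal Teichm\"uller theory, rather than smooth or analytic categories, becomes indispensable.
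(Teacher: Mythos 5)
Your overall architecture (a forgetful projection defined by capping, a set-theoretic identification of the fibre with $\mathcal{R}(\Oqc,\riem_1^P)$, then local trivializations) matches the shape of the argument in \cite{RadnellSchippers_fiber}, and the first two steps are essentially right modulo one point you gloss over: injectivity of the fibre-to-riggings map requires upgrading a homotopy rel punctures on $\riem_1^P$ to a homotopy rel boundary on $\riem_1^B$ once the riggings agree. The kernel of the natural homomorphism from $\pmodi(\riem^B)$ to the mapping class group of $\riem^P$ is generated by Dehn twists about the boundary curves, and one must check how these interact with the rigging data; this is not automatic.

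The genuine gap is in your construction of the local holomorphic section of $\pi$ over the base. You propose to take a standard local holomorphic section of $\Phi_{\riem^P}$ (Ahlfors--Weill, Douady--Earle, harmonic representatives) and then truncate or extend the resulting Beltrami differentials by zero near the sewn-in disks. But truncating a Beltrami differential changes its Teichm\"uller class: the standard sections produce differentials supported on all of $\riem^P$, including the caps, and after setting them to zero there the resulting family no longer projects back to the correct point of $T(\riem^P)$, i.e.\ it is not a section of $\pi$ at all. What is needed is a holomorphic family of Beltrami differentials representing a full neighbourhood in $T(\riem^P)$ while supported in small disks away from the caps; this is exactly what Gardiner--Schiffer variation supplies, and the paper states explicitly that the holomorphicity of the fibration and the agreement of the fibre's complex structure with that of $\mathcal{R}(\Oqc,\riem_1^P)$ rest on coordinates built from Schiffer variation in \cite{Radnell_thesis}. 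Relatedly, your final appeal to ``holomorphic dependence of the welding on quasisymmetric boundary data'' to settle the compatibility of the two complex structures on the fibre is close to circular: the space of quasisymmetric riggings has no intrinsic complex structure --- it receives one precisely through the identification with $\Oqc$ via $\chi$ (Theorem \ref{th:chi_induces_complex_structure}) --- so asserting that welding is holomorphic is essentially a restatement of the compatibility you are trying to prove. The cited proof avoids this by constructing coordinates on $T(\riem^B)$ directly as Schiffer-variation parameters for the base times $\chi$-coordinates on $\Oqc$ for the fibre, and then comparing these with the standard (Bers-embedding) complex structure on $T(\riem^B)$.
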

 We will describe the operation of sewing on caps in detail in Section \ref{se:sewingoperation} ahead.
 Theorem \ref{th:fiber_structure} is a consequence of the correspondence and the
 conformal field theoretic idea of rigging.  The holomorphicity of the fibration and its
 agreement with the complex structure of $\mathcal{R}(\Oqc,\riem_1^P)$ is non-trivial
 and is based on a method of constructing coordinates on the rigged moduli
 space via Gardiner-Schiffer variation in Radnell's thesis \cite{Radnell_thesis}.
\end{subsection}
\begin{subsection}{Weil-Petersson class Teichm\"uller space}  \label{se:WPTeichspace}
 On finite-dimensional Teichm\"uller spaces (those of finite genus
 punctured Riemann surfaces), the so-called Weil-Petersson pairing
 of Beltrami differentials defines a Riemannian metric which is automatically well-defined.
 On infinite-dimensional Teichm\"uller spaces, this pairing is not finite in general, as
 was observed by S. Nag and A. Verjovsky \cite{NagandVerjovsky}. G. Cui \cite{Cui}, H. Guo \cite{GuoHui},
 and L. Takhtajan and L.-P. Teo \cite{Takhtajan_Teo_Memoirs} independently defined a subset of the universal Teichm\"uller space
 on which the Weil-Petersson pairing is finite.  Furthermore, it is a Hilbert manifold.
 We will not discuss the Weil-Petersson metric in this paper, except
 to mention in passing that it is of significant interest in Teichm\"uller theory.

 The Weil-Petersson Teichm\"uller space has attracted a great deal of attention
 \cite{Fi, GR, HS, KU, TT, Takhtajan_Teo_Memoirs, Wu}. Up until recently, the only example
 of Teichm\"uller spaces with convergent Weil-Petersson pairing, aside
 from the finite dimensional Teichm\"uller spaces, was the Weil-Petersson universal Teichm\"uller
 space.  In a series of papers \cite{RSS_Filbert2,RSS_Filbert1,RSS_WP1,RSS_WP2}
 the authors defined a Weil-Petersson Teichm\"uller
 space of bordered surfaces of type $(g,n)$, based on the fiber structure on Teichm\"uller
 space derived from the rigged moduli space (Theorem \ref{th:fiber_structure}).
 M. Yanagishita independently gave a definition which includes
 these surfaces, based on the Bers embedding of $L^2 \cap L^\infty$
  Beltrami differentials into an open subset of the quadratic differentials, using the Fuchsian group point of view \cite{Yan}.  By results of
  \cite{RSS_WP1}, these must be the same space.
 These complex structures are also very likely equivalent, as they are in the classical $L^\infty$
  case by Theorem \ref{th:fiber_structure}, but this has not yet been demonstrated.
  We also described the tangent space in terms of infinitesimal Beltrami differentials
  \cite{RSS_WP2}.

 We now define the Weil-Petersson class Teichm\"uller space.
 \begin{definition}
  Let $\riem^B$ be a bordered Riemann surface of type $(g,n)$.  The Weil-Petersson
  class Teichm\"uller space is
  \[  T_{\text{WP}}(\riem^B) = \{ \riem^B,f,\riem_1^B \}/\sim  \]
  where $f:\riem^B \rightarrow \riem_1^B$ is a quasiconformal map whose
  boundary values are WP-class quasisymmetries, and two elements are
  equivalent $(\riem^B,f_1,\riem^B_1) \sim (\riem^B,f_2,\riem^B_2)$ if and only
  if there is a biholomorphism $\sigma:\riem^B_1 \rightarrow \riem^B_2$ such
  that $f_2^{-1} \circ \sigma \circ f_1$ is homotopic to the identity rel boundary.
 \end{definition}
 Weil-Petersson class quasisymmetries between borders of Riemann
 surfaces are defined using collar charts and ideal boundaries.  For details
 see \cite{RSS_Filbert2}.  We have
 \begin{theorem}[\cite{RSS_WP1}] Let $\riem^B$ and $\riem^B_1$ be
  bordered Riemann surfaces of type $(g,n)$.
  Let $f:\riem^B \rightarrow \riem_1^B$ be a quasiconformal map whose
  boundary values are $\mathrm{WP}$-class quasisymmetries.  There is a quasiconformal
   map $F:\riem^B \rightarrow \riem_1^B$
  which is homotopic to $f$ rel boundary and has hyperbolically $L^2$ Beltrami
  differential in the sense of equation $(\ref{eq:L2hyp_condition}).$

  In particular, any $[\riem,f_1,\riem_1^B] \in T_{\mathrm{WP}}(\riem^B)$ has a
  representative with hyperbolically $L^2$ Beltrami differential.
 \end{theorem}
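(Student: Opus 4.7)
The strategy is to modify $f$ only on collar neighborhoods of each boundary curve $\partial_i\riem^B$, so that the new Beltrami differential satisfies the hyperbolic $L^2$ condition near the boundary, while preserving the boundary values and staying in the same homotopy class rel boundary. First fix collar charts $\zeta_i:C_i\to\ann_{r_i}^1\subset\disk^+$ identifying a collar $C_i$ of $\partial_i\riem^B$ with an annulus having $\mathbb{S}^1$ as outer circle, and analogous charts $\eta_i$ on the target collars in $\riem_1^B$. By hypothesis, the boundary values $h_i := \eta_i\circ f\circ\zeta_i^{-1}|_{\mathbb{S}^1}$ are WP-class quasisymmetries of $\mathbb{S}^1$.

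The characterization of WP-class quasisymmetries (Cui, Guo, Takhtajan-Teo, recalled above) then produces, for each $i$, a quasiconformal extension $E_i$ of $h_i$ to $\disk^+$ whose Beltrami differential satisfies (\ref{eq:L2hyp_condition}). Transporting $E_i$ back via $\zeta_i$ and $\eta_i^{-1}$ gives a quasiconformal map on $C_i$ that agrees with $f$ on $\partial_i\riem^B$. Because the collar lemma produces a bi-Lipschitz equivalence between the hyperbolic metric of $\riem^B$ in the collar and the hyperbolic metric of $\disk^+$ restricted to the corresponding annulus, the hyperbolic $L^2$ property transfers under this transport.

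Next I will patch this transported extension to $f$ on a slightly thinner transition annulus $C_i'\subset C_i$ located away from $\partial_i\riem^B$. Both maps are quasiconformal with identical values on $\partial_i\riem^B$, so a convex-combination interpolation in suitable logarithmic coordinates on $C_i'$, combined with a smooth cutoff, produces a quasiconformal map $F:\riem^B\to\riem_1^B$ coinciding with the transported $E_i$ very close to $\partial_i\riem^B$ and with $f$ outside $C_i$. On the compact core $\riem^B\setminus\bigcup_i C_i$ and on the compact transition annuli $C_i'$, the Beltrami differential of $F$ is $L^\infty$-bounded while the hyperbolic area is finite, so the $L^2$ bound holds automatically there; near each $\partial_i\riem^B$ the map $F$ inherits the hyperbolic $L^2$ estimate from $E_i$. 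Summing the local contributions yields (\ref{eq:L2hyp_condition}) globally.

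To verify $F$ is homotopic to $f$ rel boundary, note that they agree outside the collars and on $\partial\riem^B$; on each annular collar both are orientation-preserving quasiconformal homeomorphisms with matching values on both boundary circles, so they differ by a self-map of an annulus fixing both boundary circles. The obstruction lies in an integer Dehn-twist parameter, and choosing the interpolation above to be the straight-line (convex-combination) homotopy yields the trivial twist class. This twist control during the gluing is the main technical obstacle; once it is handled, concatenating the local isotopies gives a global homotopy rel boundary. The second claim is then immediate: any $[\riem^B,f_1,\riem_1^B]\in T_{\mathrm{WP}}(\riem^B)$ has WP-class quasisymmetric boundary values by definition, and the construction above produces a homotopic representative with hyperbolically $L^2$ Beltrami differential.
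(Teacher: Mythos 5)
The survey does not reproduce a proof of this theorem; it is imported from \cite{RSS_WP1}, where the argument is likewise a boundary-correction argument carried out in collar charts, so your overall strategy---repair $f$ near each border using the Cui/Guo/Takhtajan--Teo quasiconformal extension of the WP-class boundary values, observe that the compact remainder of the surface contributes finitely to the hyperbolic $L^2$ integral, and control the homotopy class---is the right one. The genuine gap is the patching step. A convex combination of two quasiconformal maps of an annulus, even written in logarithmic coordinates and damped by a cutoff, need not be injective, and even where it is a homeomorphism there is no a priori reason its Beltrami coefficient stays strictly below $1$ in modulus; this is precisely why the Beurling--Ahlfors theorem is a theorem and not the formula $(1-y)x+y\tilde{g}(x)$. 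So the sentence ``a convex-combination interpolation \dots produces a quasiconformal map $F$'' asserts exactly what has to be proved. What is needed on the transition annulus $C_i'$ is a quasiconformal self-map of an annulus equal to the identity on one boundary circle and to the quasisymmetric mismatch between $f$ and the transported extension on the other; such a map exists, but by an honest extension argument (e.g.\ a Beurling--Ahlfors-type extension on the strip cover, checked to be equivariant, or a welding argument), not by linear interpolation. The saving grace, which you should make explicit, is that \emph{any} quasiconformal patch suffices: it is supported in a compact subset of $\riem^B$, where $\|\mu\|_\infty<1$ and finiteness of the hyperbolic area already give the $L^2$ bound, so no quantitative control of the patch is required.

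Two further points need repair. First, the Dehn-twist ambiguity you identify is real but is not resolved by appealing to ``the straight-line homotopy''; it is handled cleanly by noting that any two admissible patches differ by an integer twist in each collar, and that post-composing with an explicit twist map supported in a compact annulus changes the Beltrami differential only on a set of finite hyperbolic area, so one may simply correct the twist class after the fact---this is a minor adjustment rather than ``the main technical obstacle.'' Second, the transport of the hyperbolic $L^2$ estimate from $\disk^+$ to the collar needs tightening: the restriction of $E_i$ to $\ann_{r_i}^1$ need not land in the domain of $\eta_i^{-1}$ (one must first shrink the collar), and the relevant comparison is not ``the collar lemma'' but the standard localization of the hyperbolic metric of $\riem^B$ near its border, namely that in a collar chart the hyperbolic density of $\riem^B$ is comparable to that of $\disk^+$ near $\mathbb{S}^1$, while $|\mu|$ is invariant under the conformal change of chart. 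With these repairs the argument goes through and is consistent with the approach of \cite{RSS_WP1}.
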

 Thus our definition of the Weil-Petersson class Teichm\"uller space agrees
 with the definition of Yanagishita \cite{Yan}.

 \begin{theorem}[\cite{RSS_Filbert2}]  \label{th:WP_Hilbert_manifold}
 Let $\riem^B$ be a bordered surface of type $(g,n)$.
  The Weil-Petersson class Teichm\"uller space is a complex Hilbert manifold.
 \end{theorem}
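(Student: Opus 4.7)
The plan is to build the Hilbert manifold structure on $T_{\mathrm{WP}}(\riem^B)$ by importing it through the fiber structure of Theorem \ref{th:fiber_structure}. Recall that sewing on $n$ punctured disks converts $\riem^B$ into a punctured surface $\riem^P$ of type $(g,n)$, and this exhibits $T(\riem^B)$ as a holomorphic fiber space over the finite-dimensional complex manifold $T(\riem^P)$, with fiber over $[\riem^P, f_1, \riem_1^P]$ equal to the rigging space $\mathcal{R}(\Oqc, \riem_1^P)$. The first step is to verify that a point $[\riem^B, f, \riem_1^B] \in T(\riem^B)$ lies in $T_{\mathrm{WP}}(\riem^B)$ if and only if, under this identification, its fiber coordinate lies in $\mathcal{R}(\Oqco, \riem_1^P)$. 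This follows from Theorem \ref{rigext} combined with the characterization of WP-class quasisymmetries: the boundary values of $f$ being WP-class corresponds, via the collar charts used to describe the riggings, to the induced conformal maps onto neighborhoods of the punctures lying in $\Oqco$.

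Granting this identification, the second step is to construct charts on $T_{\mathrm{WP}}(\riem^B)$. Take an arbitrary point and pick a local holomorphic trivialization of the fibration $T(\riem^B) \to T(\riem^P)$ about the image of that point; such trivializations exist by the results underlying Theorem \ref{th:fiber_structure}. On the trivializing neighbourhood the total space looks like a product of an open set in $T(\riem^P)$ with $\mathcal{R}(\Oqc, \riem_1^P)$. Intersecting with $T_{\mathrm{WP}}(\riem^B)$ gives a product of an open set in the finite-dimensional manifold $T(\riem^P)$ with an open set in $\mathcal{R}(\Oqco, \riem_1^P)$. The second factor is a complex Hilbert manifold modelled on $\Oqco$ (hence on $\aonetwo \oplus \mathbb{C}^n$) by the results of Section 4.1.3, and the first factor is a finite-dimensional complex manifold, so the product is a complex Hilbert manifold modelled on $(\aonetwo)^n \oplus \mathbb{C}^n \oplus \mathbb{C}^{3g-3+2n}$.

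The third step is to check that on overlaps of two such charts the transition maps are biholomorphisms in the Hilbert category. The transition maps inherit holomorphicity from the ambient Banach fibration on $T(\riem^B)$; what has to be verified is that their restrictions to the WP-locus are holomorphic with respect to the finer Hilbert topology. Here one uses Theorem \ref{th:Oqco_open_in_Oqc}, which guarantees that the inclusion $\Oqco \hookrightarrow \Oqc$ is a holomorphic map of manifolds, together with the fact that composition with elements of $\mathrm{PModI}(\riem^B)$ preserves the WP-class by Theorem \ref{th:WP_topological_group}. Combining these facts with the Gardiner–Schiffer variational construction used in Theorem \ref{th:fiber_structure} yields that the change-of-chart maps restrict to holomorphic maps between open subsets of the model Hilbert space.

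The main obstacle is the transition-map step: Theorem \ref{th:fiber_structure} gives holomorphicity with respect to the sup-norm Banach structure, but transferring this to the $\aonetwo$-norm Hilbert structure requires honest analytic estimates showing that Gardiner–Schiffer-type coordinates preserve the $L^2$ Bergman condition on $\phi''/\phi'$ and that the resulting derivatives are bounded operators on the tangent Hilbert space. This is precisely the content of the authors' series \cite{RSS_Filbert2, RSS_Filbert1, RSS_WP1}, where the requisite estimates are established; once they are in hand, the assembly of the chart atlas described above yields the complex Hilbert manifold structure asserted in the theorem.
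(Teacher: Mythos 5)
Your proposal follows essentially the same route the paper indicates for this result: the Weil--Petersson structure is obtained from the fiber structure of Theorem \ref{th:fiber_structure}, with Hilbert charts coming from the finite-dimensional base $T(\riem^P)$ and the fibers $\mathcal{R}(\Oqco,\riem_1^P)$, and with the genuinely hard analytic content (holomorphy of the transition maps in the finer $\aonetwo$ topology, via Gardiner--Schiffer coordinates) correctly identified and deferred to \cite{RSS_Filbert2,RSS_Filbert1,RSS_WP1}, which is where the paper itself sources the theorem. The only slip is cosmetic: the finite-dimensional summand of your model space should be $\mathbb{C}^{3g-3+n}$ (the dimension of $T(\riem^P)$) rather than $\mathbb{C}^{3g-3+2n}$, which does not affect the argument.
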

 \begin{remark} We also showed that the tangent space is modelled by so-called
  $L^2$ harmonic Beltrami differentials, and therefore the Weil-Petersson pairing
  of Beltrami differentials (which defines a Hermitian metric) is finite \cite{RSS_WP2}.
 \end{remark}
 The space $T_{\mathrm{WP}}(\riem^B)$ is a cover of the rigged moduli space.  We define the mapping class group action on
 $T_{\text{WP}}(\riem^B)$ as in Section \ref{se:sub_correspondence}, but
 restricting to WP-class quasiconformal maps.
 \begin{theorem}[\cite{RSS_Filbert2}]  \label{th:WP_mapping_class_prop}
 The mapping class groups acts properly discontinuously and fixed-point freely by biholomorphisms on $T_{\mathrm{WP}}(\riem^B)$.
 The quotient is $\widetilde{M}^B(\riem^B)$, and so the rigged moduli space $\mathcal{R}(\qs_{\mathrm{WP}},\riem^B)$, inherits the structure of an infinite-dimensional complex Hilbert manifold from $T_{\mathrm{WP}}(\riem^B)$.  The same holds for $\mathcal{R}(\Oqco,\riem^P)$.
 \end{theorem}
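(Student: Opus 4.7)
The strategy is to mirror the proof of Theorem \ref{th:correspondence} verbatim, replacing quasisymmetric and quasiconformal objects throughout with their $\mathrm{WP}$-class counterparts and invoking Theorem \ref{th:WP_Hilbert_manifold} in place of the Banach manifold structure on $T^B(\riem^B)$. Define the $\pmodi(\riem^B)$-action on $T_{\mathrm{WP}}(\riem^B)$ by precomposition of the marking map, $[\rho]\cdot[\riem^B,f,\riem^B_1]=[\riem^B,f\circ\rho^{-1},\riem^B_1]$. To see this is well-defined, I only need to check that $f\circ\rho^{-1}$ is a $\mathrm{WP}$-class quasiconformal map: it is quasiconformal as the composition of quasiconformal maps, and because $\rho$ is the identity on $\partial \riem^B$ its boundary values coincide with those of $f$, hence are $\mathrm{WP}$-quasisymmetries. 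Equivalence classes rel boundary are also preserved by the same argument as in the classical case.

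Proper discontinuity and the absence of fixed points come for free by restriction from the ambient $T^B(\riem^B)$. The inclusion $T_{\mathrm{WP}}(\riem^B)\hookrightarrow T^B(\riem^B)$ is continuous and $\pmodi$-equivariant, so properly discontinuous, fixed-point-free action downstairs (Theorem \ref{th:correspondence}) pulls back to the same on $T_{\mathrm{WP}}(\riem^B)$. To promote this to an action by biholomorphisms, one pulls back Beltrami differentials under $\rho^{-1}$: in local coordinates the pullback is a bounded linear operator on the space of hyperbolically $L^2\cap L^\infty$ Beltrami differentials (the hyperbolic metric is conformally invariant, and $\rho$ being quasiconformal on the compact base surface has uniformly bounded dilatation), so by composing with the local holomorphic sections of the fundamental projection provided by the Hilbert manifold construction of \cite{RSS_Filbert2}, the action is locally a composition of holomorphic maps.

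The core step is the identification of the quotient with $\widetilde{M}^B(\mathcal{R}(\qso,\riem^B))$. Fix a base $\mathrm{WP}$-rigging $\tau=(\tau_1,\ldots,\tau_n)$ of $\riem^B$ and send $[\riem^B,f,\riem_1^B]\mapsto[\riem_1^B,f\circ\tau]$. Since $\qso(\mathbb S^1)$ is a group under composition (Theorem \ref{th:WP_topological_group}) and boundary values of $\mathrm{WP}$-class quasiconformal maps lie in $\qso$, the target is a genuine $\qso$-rigging. The map is well-defined on $T_{\mathrm{WP}}/\pmodi$ equivalence classes by exactly the calculation of \cite[Theorems 5.2, 5.3]{RadnellSchippers_monster}, using that the Teichm\"uller equivalence $\sigma$ is biholomorphic and that precomposition of $f$ by $\rho^{-1}$ does not change $f|_{\partial\riem^B}$. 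Injectivity: if $[\riem_1^B,f\circ\tau]=[\riem_2^B,g\circ\tau]$ in the rigged moduli space, the intertwining biholomorphism $\sigma$ provides, upon pulling back, a quasiconformal self-map of $\riem^B$ that is the identity on the boundary, hence an element of $\pmodi(\riem^B)$. Surjectivity is where the main analytic content sits: given any $(\riem_1^B,\phi)$ with $\phi\in\mathcal{R}(\qso,\riem_1^B)$, I need to produce a quasiconformal $f:\riem^B\to\riem_1^B$ with $\mathrm{WP}$-class boundary values such that $f|_{\partial_i\riem^B}=\phi_i\circ\tau_i^{-1}$. The composed map $\phi_i\circ\tau_i^{-1}:\partial_i\riem^B\to\partial_i\riem_1^B$ is $\mathrm{WP}$-quasisymmetric, and by the $\mathrm{WP}$-analogue of Theorem \ref{rigext} (whose quasisymmetric version underlies Theorem \ref{th:correspondence}), it extends to a quasiconformal map between collars whose Beltrami differential is hyperbolically $L^2$; gluing this collar extension with a quasiconformal map between the ``truncated'' bordered surfaces (which is purely interior and therefore has hyperbolically $L^2$ dilatation as well) yields the required $f$.

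The hard part is precisely this $\mathrm{WP}$-extension/gluing step: one must extend a $\mathrm{WP}$-class quasisymmetry from the boundary circle into a collar \emph{without losing} the hyperbolic $L^2$ condition, and then graft it to an interior quasiconformal map so that the seam does not destroy the $L^2$ integrability in the hyperbolic metric of the new surface. Once this is in hand, the quotient map $T_{\mathrm{WP}}(\riem^B)\to\widetilde M^B(\mathcal R(\qso,\riem^B))$ is a local biholomorphism onto an orbifold chart, and the Hilbert manifold structure descends. For the puncture model one then composes with the bijection between $\mathcal R(\Oqco,\riem^P)$ and $\mathcal R(\qso,\riem^B)$ furnished by the welding construction entering the definition of $\qso$, which transfers the Hilbert manifold structure to $\mathcal R(\Oqco,\riem^P)$.
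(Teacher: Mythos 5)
Your proposal takes essentially the same route as the paper: the survey states this theorem as a citation to \cite{RSS_Filbert2} and only sketches the strategy, which is exactly the one you describe --- transplant the quasisymmetric argument of Theorem \ref{th:correspondence}, with the group property of $\qs_{\mathrm{WP}}(\mathbb{S}^1)$ (Theorem \ref{th:WP_topological_group}), the continuity of the inclusion into the $L^\infty$ theory, and the WP-class collar extension/gluing as the new analytic inputs. Your flagging of the surjectivity step (extending a WP-quasisymmetry into a collar with hyperbolically $L^2$ dilatation and grafting it to an interior map without destroying the $L^2$ condition) as the hard part is precisely where the content of \cite{RSS_Filbert2} lies.
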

\end{subsection}
\begin{subsection}{The sewing operation}\label{sewingoperation}
\label{se:sewingoperation}

 The central geometric  operation in CFT is the \emph{sewing operation} which is materialised when two rigged Riemann surfaces are joined along a boundary curve by using the riggings.
 The algebraic structure of CFT is encoded geometrically by the sewing operation as a map between rigged moduli spaces.
Define $J : \mathbb{S}^1 \to  \mathbb{S}^1$ by $J(z) = 1/z$.

 \begin{definition}
The sewing operation between the rigged Riemann surfaces $(\Sigma^B_1,\psi^1)$ and $(\Sigma^B_2,\psi^2)$  is defined as follows: For boundary curves $\partial_i \Sigma^B_1$ and
 $\partial_j \Sigma^B_2$, for fixed $i$ and $j$, define
 $  \Sigma^B_1 \#_{ij} \Sigma^B_2 = (\Sigma^B_1 \sqcup
 \Sigma^B_2)/\sim  \,$,
 where two boundary points $p_1 \in \partial_i \Sigma^B_1$ and
 $p_2 \in \partial_j \Sigma^B_2$ are equivalent if and only if
 $p_2=(\psi^2_j \circ J \circ (\psi^1_i)^{-1})(p_1)$. The role of the reciprocal is to produce an orientation reversing map.
 \end{definition}
 There is a natural way to make $\Sigma^B_1 \#_{ij} \Sigma^B_2$ into a
topological space. If $\psi^1_i$ and $\psi^2_j$ are analytic
parametrizations then $\Sigma^B_1 \#_{ij} \Sigma^B_2$ becomes a Riemann surface
in a standard way using $\psi_1$ and $\psi_2$ to produce charts on the
seam. See for example L. Ahlfors and D. Sario \cite[Section II.3D]{Ahlfors_Sario}.
However this can be done in much greater generality using quasisymmetric parametrizations.

Conformal welding (Theorem \ref{th:welding}) was previously used by two of the authors to sew Riemann surfaces using quasisymmetric boundary identifications.
\begin{theorem}{\cite[Section 3]{RadnellSchippers_monster}}
If $\psi^1$ and $\psi^2$ are quasisymmetric riggings then $\Sigma^B_1 \#_{ij} \Sigma^B_2$ has a unique complex structure such that the inclusions of $\Sigma^B_1$ and $\Sigma^B_2$ are holomorphic.
\end{theorem}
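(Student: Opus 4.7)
The plan is to build the complex structure by combining the existing complex structures on $\Sigma^B_1$ and $\Sigma^B_2$, which already cover the complement of the seam, with a single new holomorphic chart in a neighborhood of each seam point obtained from the classical conformal welding theorem (Theorem~\ref{th:welding}). By construction the inclusions of $\Sigma^B_1$ and $\Sigma^B_2$ will be holomorphic, and uniqueness will follow from the uniqueness clause of the welding theorem: the welding maps are determined up to a common M\"obius post-composition, which is a biholomorphism and hence does not affect the underlying complex structure.

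First I would fix a seam point and choose collar charts $\zeta_k : A_k \to \ann_1^r$ for $k=1,2$; after post-composing with the inversion $J$ on one side as needed, I replace them with biholomorphisms $\eta_1 : A_1 \to \ann_{1/r}^1 \subset \overline{\disk^+}$ and $\eta_2 : A_2 \to \ann_1^r \subset \overline{\disk^-}$, each sending its boundary curve onto $\mathbb{S}^1$ with orientations arranged so that the transported gluing map
\[
 h := \eta_2 \circ \psi^2_j \circ J \circ (\psi^1_i)^{-1} \circ \eta_1^{-1} : \mathbb{S}^1 \to \mathbb{S}^1
\]
is orientation-preserving. Then $h$ is quasisymmetric, because quasisymmetries form a group and each of the constituent maps --- the analytic diffeomorphisms $\eta_1^{-1}$, $\eta_2$, and $J$ restricted to $\mathbb{S}^1$, together with the riggings $\psi^1_i$ and $\psi^2_j$ --- is itself quasisymmetric.

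I then apply Theorem~\ref{th:welding} to $h$ to produce conformal maps $F : \disk^+ \to \Omega^+$ and $G : \disk^- \to \Omega^-$ onto complementary Jordan domains of $\Chat$, each with quasiconformal extensions to $\Chat$, satisfying $G^{-1} \circ F = h$ on $\mathbb{S}^1$, and define the candidate chart on the collar neighborhood of the seam by
\[
 \Phi := F \circ \eta_1 \text{ on } A_1 , \qquad \Phi := G \circ \eta_2 \text{ on } A_2 ,
\]
with the common value on the seam inherited by continuity. The welding identity forces the two pieces to agree under the sewing identification, and the continuous extensions of $F$ and $G$ across $\mathbb{S}^1$ make $\Phi$ a homeomorphism from a neighborhood of the seam onto an open neighborhood in $\Chat$ of the quasicircle $\partial\Omega^\pm$. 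On its overlap with the interior charts coming from $\Sigma^B_k$ the restriction of $\Phi$ is the composition of the biholomorphism $\eta_k$ with the conformal map $F$ or $G$, hence holomorphic, giving a compatible atlas.

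For uniqueness, any other complex structure satisfying the hypothesis would, when restricted to $A_1$ and $A_2$, also produce conformal maps solving the same welding problem for $h$; the uniqueness clause of Theorem~\ref{th:welding} then forces them to differ from $F$ and $G$ only by a common M\"obius transformation, and hence to define the same complex structure. I expect the main obstacle to be precisely the construction of a holomorphic chart across a seam along which the identification has only quasisymmetric regularity, and this is exactly what the conformal welding theorem supplies. It is at this step that the quasisymmetry of the riggings is essential: for merely continuous identifications no analogous welding is available, and no natural complex structure on the sewn surface can be obtained in this way.
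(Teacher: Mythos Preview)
Your approach is correct and matches what the paper indicates: the sentence immediately preceding the theorem states that conformal welding (Theorem~\ref{th:welding}) is the tool used in \cite[Section~3]{RadnellSchippers_monster} to carry out the sewing, and your construction of the seam chart $\Phi$ via $F\circ\eta_1$ and $G\circ\eta_2$ is precisely how this is done. The verification that $h$ is quasisymmetric and that $\Phi$ is compatible with the interior charts is handled correctly.

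One small imprecision concerns uniqueness. You invoke the uniqueness clause of Theorem~\ref{th:welding}, but a second complex structure on the sewn surface only yields conformal maps on \emph{collar annuli} near $\mathbb{S}^1$, not on all of $\disk^\pm$, so the global welding uniqueness does not apply verbatim. The clean way to finish is to observe that the identity map between the two candidate complex structures is a homeomorphism which is conformal off the seam; since the seam is a quasicircle (hence conformally removable, which is in fact how welding uniqueness itself is proved), the identity is biholomorphic. This is a repackaging of the same idea you cite, so the gap is cosmetic rather than substantive.
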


A useful special case is the sewing of caps onto a bordered surface $\riem^B$ of type $(g,n)$  to obtain a punctured surface of type $(g,n)$.
The punctured disk $\overline{\disk}_0 = \{z \in \Bbb{C} \st 0 <
|z| \leq 1 \}$ will be considered as a bordered Riemann surface
whose boundary is parametrized by $z \mapsto 1/z$.

Let $\tau=(\tau_1,\ldots, \tau_{n})$ be a quasisymmetric rigging of $\riem^B$. At each boundary curve $\partial_i \riem^B$ we sew in the punctured
disk $\overline{\disk}_0$ using $\tau_i$ as
described above.  We denote the simultaneous sewing by $\riem^B\#_\tau (\cdisk_0)^{n}$
and let $\riem^P=\riem^B\#_\tau (\cdisk_0)^{n}$ be the resultant
punctured surface. The images of the punctured disks in
$\riem^P$ will be called \textit{caps}.
We can remember the rigging data by defining a homeomorphism  $\tilde{\tau}: \overline{\disk}_0 \to  \riem^P$ by
$$\tilde{\tau}(z) =
\begin{cases}
z, & \quad z \in \disk_0 \\
\tau(z), & \quad z \in \mathbb{S}^1 \\
\end{cases}
$$
Note that $\tilde{\tau}$ is holomorphic on $\disk$ and thus defines a rigging in the puncture model.
\begin{remark} \label{re:riggings_switcheroo}
 Conversely, given a puncture type rigging $f_i:\disk \rightarrow \riem^P$, $\left. f_i \right|_{\mathbb{S}^1}$ is a border-type rigging of $\riem^P \setminus$caps.
\end{remark}

\begin{definition} \label{de:E_puncture_border_relation}
 For any class of parametrization for which sewing is defined we let $\mathcal{E} :\widetilde{\mathcal{M}}^B(\riem^B) \to \widetilde{\mathcal{M}}^P(\riem^P)$ be given by $\mathcal{E}(\riem^B, \tau) = (\riem^P, \tilde{\tau})$.
The inverse of this map is defined by cutting out the image of the disk using the rigging and remembering the rigging on the boundary $\mathbb{S}^1$.
\end{definition}
It is straightforward to check that $\mathcal{E}$ and its inverse are well defined maps on moduli space.

After sewing using a quasisymmetric parametrization,
the seam in the new punctured surface is a highly irregular curve known as a quasicircle.  Curves
 usually called ``fractal'' are quasicircles.  For the
purpose of this paper, we define a quasicircle as follows.
\begin{definition}
 A quasidisk in a compact Riemann surface $\riem$ is an open connected subset $\Omega$ of $\riem$
 such that there is a local biholomorphic coordinate $\zeta:U \rightarrow \mathbb{C}$
 so that $U$ contains the closure of $\Omega$ and $\zeta(\Omega)=f(\disk)$ for some
 $f \in \Oqc$.
 A quasicircle is the boundary of a quasidisk.  If the corresponding $f$ is in $\Oqc_{\mathrm{WP}}$,
 we refer to it as a WP-class quasicircle.
\end{definition}
In punctured Riemann surfaces we do not alter the definition and apply it as
though the puncture is filled in (if necessary one can refer to punctured quasidisks
with the obvious meaning).
The complement of a quasidisk in the Riemann sphere is also a quasidisk.
In the case of Weil-Petersson class riggings, the boundary curve is more regular, in fact
rectifiable \cite{RSS_WPjump}.

\begin{theorem}
\label{th:modulibijection}
The map $\mathcal{E}$ defines bijections between the border and puncture models of the moduli space for each the three different classes of parametrizations, analytic, quasisymmetric and $\mathrm{WP}$-class quasisymmetric.
\end{theorem}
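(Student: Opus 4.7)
The plan is to prove Theorem~\ref{th:modulibijection} by constructing an explicit inverse $\mathcal{E}^{-1}$ to $\mathcal{E}$, verifying the three compatibility checks (well-definedness on equivalence classes, $\mathcal{E}^{-1}\circ\mathcal{E}=\mathrm{id}$, $\mathcal{E}\circ\mathcal{E}^{-1}=\mathrm{id}$) at the level of representatives, and finally checking that in each of the three regularity classes the forward and inverse constructions land in the stated class of riggings. The inverse is the map already indicated in Definition~\ref{de:E_puncture_border_relation}: given $[\riem^P, f] \in \widetilde{\mathcal M}^P$, remove the interiors of the $n$ closed quasidisks $\overline{f_i(\disk)}$, leaving a bordered surface $\riem^B$ whose borders carry the parametrizations $\phi_i = f_i|_{\mathbb{S}^1} \circ J$. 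That both $\mathcal{E}$ and this cutting operation respect conformal equivalence with compatible riggings is a direct check from the definitions and was sketched after Definition~\ref{de:E_puncture_border_relation}.

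For the analytic class the identifications are straightforward: if $\tau_i$ is analytic then the chart obtained by patching $\tilde{\tau}_i$ with the identity on $\disk_0$ is holomorphic across $\mathbb{S}^1$ by Schwarz reflection / analytic continuation, so $\tilde\tau_i$ extends biholomorphically to a disk of radius $r>1$ and lies in $A(\disk)$; conversely, the restriction to $\mathbb{S}^1$ of an element of $A(\disk)$ is an analytic diffeomorphism. For the quasisymmetric class the forward direction uses Theorem~\ref{rigext} to extend $\tau_i$ quasiconformally to a collar $\mathbb{A}_1^r \to A_i$, which together with $\mathrm{id}$ on $\disk_0$ gives a quasiconformal extension of $\tilde\tau_i$ to a neighbourhood of $\cdisk$, placing $\tilde\tau_i$ in $\Oqc$; the complex structure on the sewn surface comes from the quasisymmetric sewing theorem cited before Theorem~\ref{th:modulibijection}. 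Conversely, if $f_i \in \Oqc$ then $f_i|_{\mathbb{S}^1}$ is quasisymmetric by Theorem~\ref{basicqcextension} applied via a collar chart, so the border-model rigging belongs to $\qs(\mathbb{S}^1)$.

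For the Weil--Petersson class, one uses the equivalence of the three characterisations of WP-class quasisymmetries: being in $\qso(\mathbb{S}^1)$ is equivalent to the existence of a quasiconformal extension to $\disk^-$ (or equivalently $\disk^+$) whose Beltrami differential is $L^2$ with respect to the hyperbolic metric in the sense of \eqref{eq:L2hyp_condition}. Combining such an $L^2$-hyperbolic extension of $\tau_i$ with the identity on $\disk_0$ (whose Beltrami differential is zero, hence trivially in $L^2_{\mathrm{hyp}}$) produces a quasiconformal extension of $\tilde\tau_i$ to a neighbourhood of $\cdisk$ with $L^2$-hyperbolic Beltrami differential; by the characterisation theorem this places $\tilde\tau_i$ in $\Oqco$. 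Conversely, if $f_i \in \Oqco$, its quasiconformal extension has $L^2$-hyperbolic Beltrami differential, so its boundary values lie in $\qso(\mathbb{S}^1)$.

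Finally, that the two constructions are inverse at the level of representatives is geometric: sewing in a cap using $\tau_i$ and then cutting out the image of the cap under the resulting $\tilde\tau_i$ returns $\riem^B$ with the same border parametrization, while cutting out $f_i(\disk)$ and then sewing a punctured disk back using the boundary parametrization reconstructs a punctured surface biholomorphic to the original with the same puncture-model rigging. The main technical obstacle is the WP-class step: one must verify that the hybrid extension (identity on $\disk_0$, $L^2_{\mathrm{hyp}}$ on the collar side) has $L^2_{\mathrm{hyp}}$ Beltrami differential with respect to the hyperbolic metric of the sewn surface near the seam, and that cutting preserves the class. Both are controlled by the equivalence of characterisations of $\qso(\mathbb{S}^1)$ recalled above and by the WP-class welding/jump results of \cite{RSS_WPjump, RSS_Filbert1}, so no additional analytic input is required beyond what has already been developed.
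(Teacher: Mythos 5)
Your proposal follows essentially the same route the paper indicates: the inverse is the cutting operation of Definition~\ref{de:E_puncture_border_relation} and Remark~\ref{re:riggings_switcheroo}, the analytic case is Schwarz reflection, the quasisymmetric case rests on Theorem~\ref{rigext}, and the WP case on the $L^2$-hyperbolic characterization, with the hybrid-extension/hyperbolic-metric-comparability point you correctly flag being exactly the content delegated to \cite{RSS_Filbert2, RSS_Filbert1}. One small correction: with the paper's conventions the border rigging recovered from a puncture rigging $f_i$ is $f_i|_{\mathbb{S}^1}$ itself, not $f_i|_{\mathbb{S}^1} \circ J$ --- the reciprocal is already built into the cap's boundary parametrization $z \mapsto 1/z$, so your extra $J$ would make $\mathcal{E}^{-1}\circ\mathcal{E}$ return $\tau_i \circ J$ rather than $\tau_i$.
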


It is straightforward to check that the sewing operation gives a well-defined map between moduli spaces. Working on the \teich space level, the holomorphicity of the sewing operation follows quite naturally.

\begin{theorem}[\cite{RadnellSchippers_monster}]  \label{th:sewing_holomorphic}
 The sewing operation on the quasisymmetrically rigged moduli space
 of bordered surfaces is holomorphic.  Similarly, the sewing operation on $\Oqc$-rigged
 punctured moduli space is holomorphic.
\end{theorem}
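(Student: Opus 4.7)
The plan is to prove holomorphicity at the level of Teichm\"uller spaces and then descend to the rigged moduli space via Theorem \ref{th:correspondence}. Since $\mathrm{PModI}$ acts by biholomorphisms and properly discontinuously, holomorphicity of sewing on $\widetilde{M}^B$ is equivalent to holomorphicity of a lift to Teichm\"uller space. Fix base bordered surfaces $\riem_1^B, \riem_2^B$ together with base quasisymmetric riggings $\tau^1, \tau^2$, and let $\riem^B = \riem_1^B \#_{ij} \riem_2^B$ be the sewn base surface. I would define the lifted sewing
\[
 \mathcal{S}: T^B(\riem_1^B) \times T^B(\riem_2^B) \longrightarrow T^B(\riem^B)
\]
by sending $([\riem_1^B, f_1, \Sigma_1], [\riem_2^B, f_2, \Sigma_2])$ to the Teichm\"uller class of $\Sigma_1 \#_{ij} \Sigma_2$ sewn via the induced riggings $f_i|_\partial \circ \tau^i$, with marking obtained by pasting $f_1, f_2$. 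The boundary values agree on the seam by the definition of the quotient, so the pasted map is continuous and quasiconformal on $\riem^B$, and a routine check shows well-definedness on Teichm\"uller classes.

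The next step is to verify holomorphicity of $\mathcal{S}$ by lifting through the fundamental projection $\Phi$. Since $\Phi$ is holomorphic and admits local holomorphic sections, it suffices to show that the composite on Beltrami differentials is holomorphic locally. The key reduction is this: if $(\mu_1, \mu_2)$ are Beltrami differentials supported in the interior of $\riem_i^B$ away from the sewn boundary curves $\partial_i \riem_1^B$, $\partial_j \riem_2^B$, then the solutions $w^{\mu_i}$ of the Beltrami equation are conformal in collars of these boundaries. Hence the induced riggings $w^{\mu_i}|_\partial \circ \tau^i$ differ from $\tau^i$ only by conformal identification in the collar, and the sewn target is biholomorphic to $\riem^B$ equipped with the disjoint-union Beltrami differential $\mu_1 \sqcup \mu_2$ (extended by $0$ across the seam). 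The composite $\Phi^{\riem^B} \circ \mathcal{S} \circ (\text{section})$ is thus $(\mu_1,\mu_2) \mapsto \mu_1 \sqcup \mu_2$ followed by $\Phi^{\riem^B}$, a complex linear map followed by a holomorphic map, hence holomorphic.

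The main obstacle is ensuring that local holomorphic sections of $\Phi$ with values in Beltrami differentials supported away from the sewn boundary can be constructed near every point. The underlying geometric fact is that Teichm\"uller equivalence rel boundary permits isotoping any representative so that its Beltrami differential vanishes in a prescribed collar; the technical point is to implement this isotopy in a holomorphic family. This can be arranged by starting from the Ahlfors--Weill or Douady--Earle section and composing with a holomorphic cut-off operation on Beltrami differentials, verifying that the cut-off differential represents the same Teichm\"uller class and depends holomorphically on parameters in the appropriate Banach space.

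The puncture model statement then follows from the bijection $\mathcal{E}$ of Theorem \ref{th:modulibijection} combined with the agreement of complex structures established in Theorem \ref{th:fiber_structure}: the sewing on $\Oqc$-rigged punctured moduli spaces is intertwined with sewing on the border model by a biholomorphism, so it inherits holomorphicity. Alternatively, the argument above runs directly in the puncture picture with essentially no change, since the conformal disks at the punctures play the role of the collars and the supported-away-from-boundary condition becomes supported-away-from-caps.
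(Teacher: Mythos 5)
Your overall architecture --- lifting the sewing map to Teichm\"uller space via Theorem \ref{th:correspondence} and factoring it through the fundamental projection $\Phi$ --- is the right one, and it is the route taken in \cite{RadnellSchippers_monster}, which this survey merely cites. However, the step you yourself flag as the main obstacle contains a genuine error. You assert that ``Teichm\"uller equivalence rel boundary permits isotoping any representative so that its Beltrami differential vanishes in a prescribed collar.'' This is false. Equivalence rel boundary fixes the boundary values of the marking map up to post-composition with a biholomorphism of the target, and biholomorphisms are analytic on the borders. A quasiconformal map whose Beltrami differential vanishes in a collar is conformal there, hence extends by Schwarz reflection across the border into the double and restricts to an \emph{analytic} diffeomorphism of the border. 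A generic point of $T^B(\riem_1^B)$ has merely quasisymmetric, non-analytic boundary correspondence with the base surface, so no representative of its class is conformal near the boundary to be sewn. For the same reason, multiplying a Beltrami differential by a cut-off supported away from the collar changes the boundary values of the solution of the Beltrami equation and therefore changes the Teichm\"uller class; your proposed ``holomorphic cut-off operation'' is not a section of $\Phi$ and does not preserve the point of Teichm\"uller space. The reduction to differentials supported away from the seam therefore cannot be carried out, even locally.

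The repair is to drop the support condition altogether, and this is exactly where the quasisymmetric hypothesis genuinely enters. For arbitrary $\mu_1,\mu_2$ the pasted map $w^{\mu_1}\sqcup w^{\mu_2}:\riem^B\to \Sigma_1\#_{ij}\Sigma_2$ is still well defined (your computation of compatibility on the seam does not use the support condition), it is quasiconformal off the seam, and the seam is a quasicircle precisely because the riggings are quasisymmetric. Quasicircles are removable for quasiconformal homeomorphisms and have measure zero, so the pasted map is quasiconformal on all of $\riem^B$ with Beltrami differential $\mu_1\sqcup\mu_2$ almost everywhere. Hence $\mathcal{S}\circ(\Phi\times\Phi)=\Phi^{\riem^B}\circ\iota$, where $\iota(\mu_1,\mu_2)=\mu_1\sqcup\mu_2$ is bounded and complex linear, and composing with local holomorphic sections of $\Phi\times\Phi$ yields holomorphicity of $\mathcal{S}$ with no restriction on supports. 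One must separately supply the conformal welding input (Theorem \ref{th:welding}) giving the sewn surface a complex structure for which the inclusions are holomorphic, and check well-definedness on classes, which holds because equivalence rel boundary determines the induced riggings up to the biholomorphism $\sigma$. Your treatment of the puncture model via $\mathcal{E}$ and Theorems \ref{th:modulibijection} and \ref{th:fiber_structure} is fine once the border case is in place.
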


Work in progress by the authors indicates
that sewing is a holomorphic operation in Weil-Petersson class Teichm\"uller space.
\end{subsection}

\end{section}
\begin{section}{Analytic setting for the determinant line bundle}
 \label{se:analytic_setting_detline}

The central charge (or conformal anomaly) plays an important role in the physics, algebra and geometry of CFT. Geometrically it is encoded by the determinant line bundle over the rigged moduli space. See \cite{Huang, SegalPublished}.

The determinant line bundle arises from decompositions of functions
 on the boundaries of the Riemann surfaces into Fourier series with
 positive and negative parts.   Typically, the determinant line bundle is
 defined using analytic riggings.  In fact, quasisymmetric mappings are exactly
 the largest class of riggings for which these decompositions are
 defined.   The determinant line bundle itself
 requires restricting to WP-class quasisymmetric riggings, and
 these mappings arise quite naturally from the definition of the determinant
 line bundle.  We will see this in the next few sections.

 In  Section \ref{se:decompositions}, we define the relevant spaces of decompositions of Fourier series. One of our main contributions is our recognition of the role of
 the Dirichlet spaces.
 In Section \ref{se:jump}, we outline some of our results on the jump
 formula in the setting of quasicircles and WP-class quasicircles.
 This is a key tool for investigating the decompositions in the puncture setting.   In
 Section \ref{se:SW_Grunsky}, we describe the Segal-Wilson universal Grassmannian, which
 by results of S. Nag and D. Sullivan \cite{NagSullivan} can be seen as a generalization of the
 classical period map for compact surfaces to the case of the disk.  We also
 describe the relation of the projections described in Section \ref{se:decompositions}
 to the Grunsky operator, a classical construction in geometric function theory.
 We show that the quasisymmetries are the largest class of riggings preserving
 the $H^{1/2}$ space of the boundary of a Riemann surface, and that the Fredholm property of
the operator defining the determinant lines requires the use of quasisymmetric riggings and the
existence of holomorphic sections of the determinant line bundle requires the WP-class riggings.

\begin{subsection}{Decompositions of Fourier series and the determinant line bundle}
 \label{se:decompositions}

 We will decompose Fourier series of functions on $\partial_i \riem^B$ into
 positive and negative parts.  The Fourier series themselves are obtained by pulling
 back the boundary values to the circle $\mathbb{S}^1$.  We first define
 the analytic class of the boundary values.
 \begin{definition} \label{de:sobolevboundary}
 Let $\riem^B$ be a bordered Riemann surface of type $(g,n)$.
  Denote the complex Sobolev
  space $H^{1/2}(\partial_i \riem^B)$ by $\mathcal{H}(\partial_i \riem^B)$.
 \end{definition}
  This can be defined precisely and conformally invariantly using the definition
  of border \cite{Ahlfors_Sario}, or by considering the boundary as an analytic
  curve in the double of $\riem^B$.  So the statement that $h\in\mathcal{H}(\partial_i \riem^B)$ is equivalent to the condition that
  for a fixed collar chart $\zeta_i$ of each boundary $\partial_i \riem$, the
  function $h \circ \zeta_i^{-1}$ is in $H^{1/2}(\mathbb{S}^1)$ (and this
  characterization is independent of $\zeta_i$).
 \begin{remark}[On this choice of function space]
  The space $\mathcal{H}(\partial_i \riem^B)$ is the largest function
  space on the boundary for which the determinant line bundle arising from
  decompositions can be defined.
  See Huang \cite[Corollary D.3.2., Theorem D.3.3.]{Huang}.   We will return
  to this point ahead.
 \end{remark}

  If we restrict to the circle, we get a familiar special case in classical
 complex analysis.  Namely,
  $\mathcal{H}(\mathbb{S}^1)$ is the set of functions $h:\mathbb{S}^1 \rightarrow \mathbb{C}$  in $L^2 (\mathbb{S}^1 )$
 such that the Fourier series $h(z) = \sum_{n=-\infty}^\infty h_n e^{i n \theta}$
  satisfies
  \[  \sum_{n=-\infty}^\infty |n| |h_n|^2 <\infty.   \]
  The subset of real-valued elements of $\mathcal{H}(\mathbb{S}^1)$
  (equivalently, those such that $h_n = \overline{h_{-n}}$ for all $n$) is denoted
  by $\mathcal{H}_\mathbb{R}(\mathbb{S}^1)$.

  Let $\mathcal{D}(\disk^\pm)$ denote the Dirichlet spaces of $\disk^\pm$. These spaces consist of the holomorphic functions on $\disk^\pm$ with
  finite Dirichlet energy, that is,
  \[  \iint_{\disk^\pm} |h'|^2 \,dA <\infty.  \]
  We will assume that elements of $\mathcal{D}(\disk^-)$ are holomorphic at $\infty$
  and vanish there.
  It is a classical fact that one has
  \begin{align*}
     \mathcal{D}(\disk^+) & = \left\{ h \in \mathcal{H}(\mathbb{S}^1) \,:\,
      h = \sum_{n=0}^\infty h_n  e^{in\theta} \ \ \ \right\}  \\
      \mathcal{D}(\disk^-) & = \left\{ h
       \in \mathcal{H}(\mathbb{S}^1) \,:\, h=
      \sum_{n=-\infty}^{-1} h_n  e^{in\theta} \right\} \\
  \end{align*}
  by replacing $e^{i\theta}$ with $z$.
  In this section, we will use the convention that the constant term always belongs
  in $\mathcal{D}(\disk^+)$.
  Thus we have the canonical decomposition
  \begin{equation} \label{eq:HSonedecomp}
   \mathcal{H}(\mathbb{S}^1)=  \mathcal{D}(\disk^+)\oplus  \mathcal{D}(\disk^-),
  \end{equation}
  and therefore, from now on we do not comment on the restriction or extension to simplify notation.
  Let $P(\disk^\pm): \mathcal{H}(\mathbb{S}^1) \rightarrow \mathcal{D}(\disk^\pm)$
  denote the projection onto the components.

 Given a function $g$, define the composition map $\mathcal{C}_g$ by $\mathcal{C}_g(h) = h \circ g$ in any appropriate setting.  We have the following result.
 \begin{theorem}  \label{th:qs_riggings_preserve_H}
  Let $\riem^B$ be a bordered Riemann surface of type $(g,n)$ and let
  $\phi = (\phi_1,\ldots,\phi_n) \in \mathcal{R}(\qs(\mathbb{S}^1),\riem^B)$.
  For each $i$, the map $\mathcal{C}_{\phi_i}:\mathcal{H}(\partial_i \riem) \rightarrow
  \mathcal{H}(\mathbb{S}^1)$ is a bounded isomorphism.
 \end{theorem}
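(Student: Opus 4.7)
The plan is to reduce the statement on a bordered Riemann surface to the model situation of a quasisymmetric self-map of $\mathbb{S}^1$, and then to prove invariance of $\mathcal{H}(\mathbb{S}^1)$ under such maps by combining a quasiconformal extension with the quasi-invariance of the Dirichlet energy.

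First I would fix, for each $i$, a collar chart $\zeta_i:A_i\to\mathbb{A}_1^r$ of $\partial_i\riem^B$.  By Definition \ref{de:sobolevboundary} and the comments following it, the pullback $h\mapsto h\circ\zeta_i^{-1}$ is a bounded isomorphism $\mathcal{H}(\partial_i\riem^B)\to\mathcal{H}(\mathbb{S}^1)$.  Setting $\psi:=\zeta_i\circ\phi_i$, Definition \ref{de:riggings_border_model} together with Remark \ref{re:riggings_border_model} ensures that $\psi\in\qs(\mathbb{S}^1)$, and $\mathcal{C}_{\phi_i}$ is conjugate to $\mathcal{C}_{\psi}$ under these identifications.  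It therefore suffices to prove that for every $\psi\in\qs(\mathbb{S}^1)$ the operator $\mathcal{C}_\psi:\mathcal{H}(\mathbb{S}^1)\to\mathcal{H}(\mathbb{S}^1)$ is a bounded isomorphism.

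The central analytic input is the quasi-invariance of the Dirichlet integral under quasiconformal change of variables.  Using Theorem \ref{basicqcextension} I would extend $\psi$ to a quasiconformal homeomorphism $\Psi:\cdisk\to\cdisk$ of maximal dilatation $K$, with $\Psi|_{\mathbb{S}^1}=\psi$.  For $h\in\mathcal{H}(\mathbb{S}^1)$, let $\mathcal{P}h$ denote its harmonic extension to $\disk$; by the Douglas formula the Dirichlet energy
\[ D(\mathcal{P}h)=\iint_{\disk}|\nabla\mathcal{P}h|^2\,dA \]
is comparable, up to a universal constant, to the seminorm $\sum_n|n||h_n|^2$ built into the definition of $\mathcal{H}(\mathbb{S}^1)$.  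Putting $F:=\mathcal{P}h\circ\Psi\in W^{1,2}(\disk)$, the pointwise quasiconformal inequality $|DF(z)|^2\le K\,J_\Psi(z)\,|\nabla\mathcal{P}h(\Psi(z))|^2$, valid a.e., combined with change of variables, yields $D(F)\le K\cdot D(\mathcal{P}h)$.  The boundary trace of $F$ equals $h\circ\psi$ (using continuity of $\Psi$ up to $\mathbb{S}^1$ and the nontangential convergence $\mathcal{P}h\to h$), and the harmonic extension minimizes Dirichlet energy among $W^{1,2}$ functions with a given trace, hence
\[ \|\mathcal{C}_\psi h\|^2_{\mathcal{H}}\;\asymp\;D(\mathcal{P}(h\circ\psi))\;\le\;D(F)\;\le\;K\cdot D(\mathcal{P}h)\;\lesssim\;\|h\|_{\mathcal{H}}^{2}. \]
Boundedness of the inverse $\mathcal{C}_{\psi}^{-1}=\mathcal{C}_{\psi^{-1}}$ then follows from the same estimate applied to $\psi^{-1}$, which is again quasisymmetric since the quasisymmetries form a group under composition.

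The main technical subtlety I expect to have to handle is the treatment of the $L^2$/constant part of the norm: a general quasisymmetry need not be absolutely continuous, so pointwise composition $h\circ\psi$ is not automatically well-defined from a pure $L^2(\mathbb{S}^1)$ perspective, and quasisymmetries need not preserve $L^2$ directly.  The cleanest way to sidestep this is to exploit the orthogonal decomposition $\mathcal{H}(\mathbb{S}^1)=\mathcal{D}(\disk^+)\oplus\mathcal{D}(\disk^-)$ from \eqref{eq:HSonedecomp}, whose natural norm on each summand is precisely the Dirichlet seminorm handled above, so that the entire argument is intrinsically performed at the level of Dirichlet spaces and the ambiguity in constants causes no trouble.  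An alternative is to recast the $\dot{H}^{1/2}$-seminorm via the Besov--Slobodeckij double integral and invoke the doubling property in Definition \ref{quasisymmetriccircle}; either route gives the required bounded isomorphism, and both make transparent that quasisymmetry is the natural regularity class for this invariance.
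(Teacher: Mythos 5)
Your reduction to the circle via collar charts is exactly the paper's first step, and your core mechanism --- extend $\psi$ quasiconformally by Theorem \ref{basicqcextension}, use the Douglas identity to express the $\dot H^{1/2}$-seminorm as the Dirichlet energy of the harmonic extension, apply the pointwise quasiconformal inequality with a change of variables, and finish with the Dirichlet principle --- is the standard Nag--Sullivan argument and correctly proves quasi-invariance of the \emph{seminorm}; this is precisely the content of Theorem \ref{th:NagSullivan_bounded}. The paper's own proof is a two-line citation: collar charts, conformal invariance of the Dirichlet space, and Theorem 2.2 of \cite{RSS_WPjump}, whose substance is the same mechanism \emph{plus} control of the constant term.

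That last point is where your proposal has a genuine gap, and it is exactly the one the paper flags in the remark immediately following the theorem: the Nag--Sullivan bound is ``almost enough'' but ``does not control the constant term; this is more delicate and requires a Poincar\'e inequality.'' The norm on $\mathcal{H}(\mathbb{S}^1)$ and on $\mathcal{H}(\partial_i\riem^B)$ (compare the norm $|h(0)|^2 + D(h)$ used in Section \ref{se:jump}) is the full $H^{1/2}$ norm, so boundedness of $\mathcal{C}_\psi$ requires the estimate $|\widehat{h\circ\psi}(0)| \lesssim \|h\|_{\mathcal{H}}$, whereas your energy argument only controls $h\circ\psi$ modulo constants, i.e.\ it bounds the operator $\hat{C}_\psi$ on $\mathcal{H}_*(\mathbb{S}^1)$. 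Neither of your proposed sidesteps closes this: working ``at the level of Dirichlet spaces'' via the decomposition \eqref{eq:HSonedecomp} does not help because $\mathcal{C}_\psi$ does not preserve that decomposition (its off-diagonal blocks $b,\bar b$ in Section \ref{se:SW_Grunsky} are nonzero in general --- that failure is what produces the Grunsky operator), and the Besov--Slobodeckij double integral again sees only the seminorm. Since a quasisymmetry can be badly non--absolutely continuous, the mean of $h\circ\psi$ is not controlled by the mean of $h$ together with the seminorm in any routine way; the missing ingredient is the Poincar\'e-type estimate supplied in \cite[Theorem 2.2]{RSS_WPjump}. With that single estimate added, your argument becomes a complete, self-contained proof along the same lines as the one the paper outsources.
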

 \begin{proof}  This follows from the characterization in terms of collar charts
  $\zeta_i$ and \cite[Theorem 2.2]{RSS_WPjump}, together with the conformal invariance
   of Dirichlet space.
 \end{proof}
 \begin{remark}
  Theorem \ref{th:NagSullivan_bounded} ahead is almost enough in place of
  \cite[Theorem 2.2]{RSS_WPjump}, but it does not
  control the constant term; this is more delicate and requires a Poincar\'e inequality.
 \end{remark}

We examine the decompositions of the Fourier series on the boundary.
 Let $(\riem^B,(\phi_1,\ldots,\phi_n))$ be a rigged bordered surface. Using the decomposition \eqref{eq:HSonedecomp} we can make sense of Fourier series for functions on the boundary as follows:
 \begin{definition}
  For any boundary curve $\partial_i \riem^B$,  define
  \begin{equation*}
    F_\pm(\partial_i \riem^B) = \{  h \in \mathcal{H}(\partial_i \riem^B) \,:\,
     h \circ \phi_i  \in \mathcal{D}(\disk^\mp)  \}.
  \end{equation*}
 \end{definition}
 Here ``F'' stands for ``Fourier series''. In other words, we use the riggings
 to enable the definition of a Fourier series for functions on the boundary, and then
 we obtain decompositions of those Fourier series into plus or minus parts.
Recall that by construction, all our boundary parametrizations are negatively oriented, and this explains the $\mp$ in the definition of $F_{\pm}$.
In CFT it is customary to use both positively and negatively oriented boundary parametrizations, but in studying the analytic issues we can ignore this extra data and its effect on where the constants appear in $F_{\pm}$.

 From our results outlined above we have the following result.
 \begin{theorem}
  The projection operators
  \[  P(\partial_i \riem^B)_\pm:\mathcal{H}(\partial_i \riem^B) \longrightarrow F_\pm(\partial_i
  \riem^B)  \]
  are bounded and given by
  \[  P(\partial_i \riem^B)_\pm = \mathcal{C}_{\phi^{-1}_{i}} \circ P(\disk^{\mp}) \circ
    \mathcal{C}_{\phi_i}.    \]
 \end{theorem}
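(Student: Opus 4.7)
The plan is to obtain the result by transporting the canonical decomposition \eqref{eq:HSonedecomp} from the model circle $\mathbb{S}^1$ to the boundary curve $\partial_i \riem^B$ via the composition operator $\mathcal{C}_{\phi_i}$.

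First I would invoke Theorem \ref{th:qs_riggings_preserve_H}: since $\phi_i$ is quasisymmetric, the map $\mathcal{C}_{\phi_i} : \mathcal{H}(\partial_i \riem^B) \to \mathcal{H}(\mathbb{S}^1)$ is a bounded isomorphism of Hilbert spaces, with bounded inverse $\mathcal{C}_{\phi_i^{-1}}$ (again quasisymmetric, hence again covered by the theorem). Next I would reinterpret the definition of $F_{\pm}(\partial_i \riem^B)$ as
\[
F_{\pm}(\partial_i \riem^B) \;=\; \mathcal{C}_{\phi_i}^{-1}\bigl(\mathcal{D}(\disk^{\mp})\bigr) \;=\; \mathcal{C}_{\phi_i^{-1}}\bigl(\mathcal{D}(\disk^{\mp})\bigr),
\]
so that $F_{\pm}(\partial_i \riem^B)$ is the image under the isomorphism $\mathcal{C}_{\phi_i^{-1}}$ of a closed subspace of $\mathcal{H}(\mathbb{S}^1)$, and is therefore itself a closed subspace of $\mathcal{H}(\partial_i \riem^B)$.

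The direct sum decomposition $\mathcal{H}(\mathbb{S}^1) = \mathcal{D}(\disk^+) \oplus \mathcal{D}(\disk^-)$ from \eqref{eq:HSonedecomp} then transports under the bounded isomorphism $\mathcal{C}_{\phi_i^{-1}}$ to
\[
\mathcal{H}(\partial_i \riem^B) \;=\; F_{-}(\partial_i \riem^B) \oplus F_{+}(\partial_i \riem^B),
\]
which shows that the projections $P(\partial_i \riem^B)_{\pm}$ are well defined. To verify the stated formula, I would take $h \in \mathcal{H}(\partial_i \riem^B)$ and decompose $\mathcal{C}_{\phi_i}(h) = h\circ \phi_i = g_+ + g_-$ with $g_{\pm} = P(\disk^{\pm})(h\circ\phi_i) \in \mathcal{D}(\disk^{\pm})$. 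Pulling back by $\phi_i^{-1}$ yields $h = (g_+\circ \phi_i^{-1}) + (g_-\circ \phi_i^{-1})$, and by the above identification $g_{\mp}\circ\phi_i^{-1} \in F_{\pm}(\partial_i \riem^B)$; uniqueness of the decomposition then forces $P(\partial_i \riem^B)_{\pm}(h) = \mathcal{C}_{\phi_i^{-1}}\circ P(\disk^{\mp}) \circ \mathcal{C}_{\phi_i}(h)$. Boundedness is immediate since the right-hand side is a composition of three bounded operators.

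The argument is essentially algebraic once Theorem \ref{th:qs_riggings_preserve_H} is in hand, so the only genuine obstacle has already been absorbed into that theorem, namely the nontrivial fact that pre-composition with an arbitrary quasisymmetry preserves $H^{1/2}(\mathbb{S}^1)$ (the relevant Sobolev-half regularity being exactly the borderline case). I would make a brief parenthetical remark that the quasisymmetry hypothesis is sharp here in view of the sensitivity of $H^{1/2}$ to the modulus of continuity of the reparametrization, which is the content of the Remark following Theorem \ref{th:qs_riggings_preserve_H}.
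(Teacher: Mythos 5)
Your proposal is correct and is exactly the argument the paper has in mind: the paper itself remarks that the formula is ``more or less obvious'' once the boundedness of $\mathcal{C}_{\phi_i}$ on $\mathcal{H}(\partial_i \riem^B)$ (Theorem \ref{th:qs_riggings_preserve_H}) is available, and your transport of the canonical decomposition $(\ref{eq:HSonedecomp})$ through that isomorphism is the intended filling-in of the algebra. The only nitpick is that the remark you cite at the end actually concerns control of the constant term via a Poincar\'e inequality rather than sharpness of the quasisymmetry hypothesis; sharpness is the content of Theorems \ref{th:NagSullivan_bounded} and \ref{th:quasisymmetries_largest}.
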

 The formula for the operators is more or less obvious.  However an analytic proof
 requires among other things the boundedness of the composition operator $\mathcal{C}_{\phi_i}$
 on the space $\mathcal{H}(\partial_i \riem)$ (see Theorem \ref{th:qs_riggings_preserve_H}).

 Define the Dirichlet space of holomorphic functions on $\riem^B$ to be
 \[  \mathcal{D}(\riem^B) = \left\{ h:\riem^B \rightarrow \mathbb{C} \,:\, h \ \ \ \text{holomorphic}
   \ \text{and} \ \ \iint_{\riem^B} dh \wedge \overline{dh} <\infty \right\}.  \]
 The determinant line bundle will be defined using the operator
\begin{align*}
\pi : \mathcal{D}(\riem^B) & \longrightarrow \bigoplus_{i=1}^n F_+(\partial_i \riem^B) \\
g & \longmapsto  \left( P(\partial_1 \riem^B)_+  (g|_{\partial_1 \riem^B}), \ldots,   P(\partial_n \riem^B)_+  (g|_{\partial_n \riem^B}) \right)
\end{align*}
 Next we define the determinant line (see \cite[Appendix D]{Huang} for details).
 \begin{definition} \label{detline}
  The determinant line associated to the rigged Riemann surface $(\riem^B, \phi)$ is the line
  \[  \text{Det}(\pi) = \text{Det}(\text{ker}(\pi))^* \otimes \text{Det}(\text{coker}(\pi)).  \]
	where Det of a $k$-dimensional vector space is its $k$th exterior power.  These lines should be $\mathbb{Z}_2$ graded but this is not important for our current discussions.
 \end{definition}
 \begin{remark}  In \cite[Appendix D]{Huang_CFT}, Huang defines the operator $\pi$ on
  holomorphic functions with smooth extensions to $\partial \riem^B$, and then in Proposition
  D.3.3. extends it to the completion of these functions in the Sobolev space $H^s(\riem^B)$ for all $s \geq 1$.
  It can be shown, that for $s=1$ this is precisely $\mathcal{D}(\riem^B)$.  Thus we have
  a very satisfying connection with classical function theory.  Note also that
  $\mathcal{D}(\riem^B)$ is manifestly conformally invariant.

  The boundary values of
  elements of $\mathcal{D}(\riem^B)$ are
  in $H^{1/2}(\partial_i \riem^B)$ for all $i$.  In fact, up to topological obstructions,
  $H^{1/2}(\partial \riem^B)$ is exactly the boundary values of the complex harmonic Dirichlet space.

 \end{remark}

 Definition \ref{detline} requires that $\pi$ is Fredholm, which holds for quasisymmetric riggings. See Theorem \ref{Fiber identity} for the case of genus zero and one boundary curve. For analytic boundary parametrizations this is
 shown in \cite[Theorem D.3.3]{Huang_CFT}.

In the analytic case, the determinant lines form a holomorphic line bundle.
\begin{theorem}
The determinant lines form a holomorphic line bundle over the rigged moduli space $\widetilde{\mathcal{M}}^B(A(\mathbb{S}^1))$.
\end{theorem}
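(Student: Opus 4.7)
The plan is to invoke the Quillen--Knudsen--Mumford construction of a determinant line bundle for a holomorphic family of Fredholm operators between Banach spaces. Fredholmness of $\pi$ on the analytic-rigging moduli space is already available, so the task reduces to (a) assembling $\mathcal{D}(\riem^B)$ and $\bigoplus_i F_+(\partial_i \riem^B)$ into holomorphic Banach bundles over $\widetilde{\mathcal{M}}^B(A(\mathbb{S}^1))$, and (b) showing that $\pi$ is then a holomorphic section of the resulting bundle of Fredholm operators.

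For (a) and (b), I would fix a base point $x_0 = [\riem_0^B, \phi^0]$ and work in a coordinate neighbourhood. The rigged moduli space $\widetilde{\mathcal{M}}^B(A(\mathbb{S}^1))$ is locally modelled by finite-dimensional \Te coordinates of a base punctured surface together with the LB-space of analytic riggings. Choosing a holomorphic family of quasiconformal reference maps $f_x: \riem_0^B \to \riem_x^B$ parametrized by these coordinates, pullback by $f_x$ trivializes $\mathcal{D}(\riem_x^B)$ and $\mathcal{H}(\partial_i \riem_x^B)$ as holomorphic Banach bundles over the neighbourhood. The formula
\[
P(\partial_i \riem^B)_+ = \mathcal{C}_{\phi_i^{-1}} \circ P(\disk^-) \circ \mathcal{C}_{\phi_i}
\]
then expresses $\pi$ as a composition of the fixed orthogonal projection $P(\disk^-)$, composition operators with analytic riggings, and the (holomorphic) boundary-value map on $\mathcal{D}(\riem_x^B)$. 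For analytic riggings the composition operators depend holomorphically on $\phi_i$ in the LB-space topology by Cauchy-kernel estimates on a fixed annulus surrounding $\mathbb{S}^1$; combined with the holomorphic dependence of the \Te deformations, this yields the required holomorphic dependence of $\pi_x$ on $x$.

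With a holomorphic family of Fredholm operators in place, I would construct local trivializations of the determinant line in the usual way. Since $\mathrm{coker}(\pi_{x_0})$ is finite dimensional, pick a finite-dimensional subspace $V \subset \bigoplus_i F_+(\partial_i \riem_{x_0}^B)$ mapping isomorphically onto $\mathrm{coker}(\pi_{x_0})$. After shrinking the neighbourhood of $x_0$, the augmented operator $\tilde{\pi}_x := \pi_x \oplus \iota_V$ from $\mathcal{D}(\riem_x^B) \oplus V$ to $\bigoplus_i F_+(\partial_i \riem_x^B)$ remains surjective with a finite-dimensional kernel $K_x$ that assembles into a holomorphic finite-rank vector bundle. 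The algebraic identity
\[
\mathrm{Det}(\pi_x) \cong \mathrm{Det}(K_x)^* \otimes \mathrm{Det}(V)
\]
provides a local holomorphic trivialization of $x \mapsto \mathrm{Det}(\pi_x)$, and transition functions arising from two different choices $V, V'$ on an overlap are determinants of finite-rank holomorphic isomorphisms, hence holomorphic. Equivariance of the construction under the discrete mapping-class-group action that was already used to endow $\widetilde{\mathcal{M}}^B(A(\mathbb{S}^1))$ with its complex structure then gives descent to the quotient.

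The hard step is unquestionably (a)--(b): rigorously establishing, in infinite dimensions, that $\pi_x$ is a holomorphic family. The Quillen construction is essentially formal once holomorphic inputs are in hand, but packaging $\mathcal{D}(\riem^B)$ and the $H^{1/2}$-boundary spaces as holomorphic Banach bundles, and controlling the composition operators $\mathcal{C}_{\phi_i}$ under variation of analytic riggings, is where the analytical content lies. The analyticity assumption on the riggings is precisely what makes this tractable here, since each such rigging extends to a fixed annular neighbourhood of $\mathbb{S}^1$ and composition with a holomorphic map of such annuli acts holomorphically on Bergman-type spaces; for quasisymmetric or WP-class riggings the corresponding holomorphicity is substantially more subtle, which is consistent with the paper's broader thesis that the WP framework is the correct analytic setting.
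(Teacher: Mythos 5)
A preliminary remark on the comparison: the paper offers no proof of this statement. It is a survey item, attributed to Huang \cite{Huang} in genus zero and to Radnell's thesis \cite{Radnell_thesis} in higher genus, and the surrounding text explicitly flags the two points that are nontrivial: how to attach a determinant line to a rigged moduli space \emph{element} (a conformal equivalence class) at all, and the fact that in higher genus one must work over the universal Teichm\"uller curve. Your overall architecture --- realize $\pi$ as a holomorphic family of Fredholm operators between holomorphic Banach bundles, then run the standard finite-dimensional augmentation $\tilde{\pi}_x = \pi_x \oplus \iota_V$ to get local trivializations of $\mathrm{Det}$ --- is the correct and standard one, and is in outline what the cited references do.

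There is, however, a genuine gap at exactly the step you identify as the hard one. Pullback by a quasiconformal reference map $f_x:\riem_0^B \to \riem_x^B$ does \emph{not} trivialize $\mathcal{D}(\riem_x^B)$ as a holomorphic Banach bundle: if $h$ is holomorphic on $\riem_x^B$, then $h \circ f_x$ has finite Dirichlet energy on $\riem_0^B$ but is not holomorphic, so the domain of $\pi$ is not a fixed Banach space under your trivialization. What one actually obtains is a family of closed subspaces $\mathcal{C}_{f_x}\mathcal{D}(\riem_x^B)$ of a fixed harmonic Dirichlet space (equivalently, via boundary values and the riggings, of $\bigoplus_i \mathcal{H}(\mathbb{S}^1)$), and showing that this family of subspaces varies holomorphically is precisely where the analytic content lies; in genus zero this is the identification of $\mathcal{C}_f\mathcal{D}(\riem^B)$ with the graph of a Grunsky-type operator (Theorem \ref{th:Grunsky_genus_zero}), and in higher genus it is still described in the paper as work in progress. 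The second, related, omission is well-definedness over moduli classes: $\pi$ is attached to a representative $(\riem^B,\phi)$, not to the class $[\riem^B,\phi]$, and for $g \geq 2$ there is no canonical representative, so a holomorphic choice of representatives (the universal Teichm\"uller curve over Teichm\"uller space, with coordinates from Gardiner--Schiffer variation as in \cite{Radnell_thesis}) must be built in before the mapping-class-group descent you invoke at the end is even meaningful. In genus zero and one one can normalize the surface inside $\hat{\mathbb{C}}$ or a standard torus, which is why the sketch as written is closest to Huang's genus-zero argument; for general $(g,n)$ these two inputs are missing.
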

Previously this has only been studied in the case of analytic riggings.
In genus-zero this was proved first by Huang \cite{Huang} and in higher-genus by Radnell \cite{Radnell_thesis}.
As well as the issues pointed out in Section \ref{ss:CFT}, how to define the determinant line over a rigged moduli space element is non-trivial.
In genus-zero (and one), there are obvious canonical representatives of each conformal equivalence class of surface, but in higher-genus the universal \teich curve must be used, and the det line bundle should be thought of as lying over this space. The case of WP-class riggings is under consideration by the authors.  We conjecture that the determinant line bundle is a holomorphic line bundle
in this case.

\end{subsection}
\begin{subsection}{The Plemelj-Sokhotski jump formula}  \label{se:jump}
 The decompositions in the previous section are closely related to the Plemelj-Sokhotski
 jump formula.  Furthermore, Huang \cite{Huang} showed that the cokernel of $\pi$ can be
 computed in genus zero through the use of the jump formula (note that this is interesting
 only if one carries the constants through).
 Radnell \cite{Radnell_thesis} showed
 that these techniques extend to higher genus, with explicit computations in genus one.

  In a sense,
 extending to higher genus is a topological rather than an analytic problem - the analytic issues arise already in
 genus zero.  The remainder of this section will therefore only deal with the genus zero case.
 We will touch on the issue of higher genus briefly in Section \ref{se:SW_Grunsky}.

 Roughly, the jump formula says that for a reasonable function $h$ on a reasonably regular Jordan curve $\Gamma$, if one forms the Cauchy integral
 \[  h_{\pm}(z) = \frac{1}{2\pi i} \int_{\Gamma} \frac{h(\zeta)}{z- \zeta} d\zeta, \quad \text{for } z \in \Omega^\pm,   \]
 where $\Omega^+/ \Omega^-$ are the bounded/unbounded components of the complement of $\Gamma$, and
 $h_{\pm}$ has domain  $\Omega^{\pm}$,
 then the resulting holomorphic functions $h_{\pm}$ satisfy
 \[  \left. (h_{+} - h_{-}) \right|_{\Gamma} = h.  \]

 For this to make sense, the regularity of the curve and of $h$ must be such that
 there is a sensible notion of boundary values of the holomorphic functions, or
 at least of the decomposition.
 It is also desirable that the Cauchy integral is a bounded operator.  Thus there are
  three interrelated analytic choices to be made: (1) the class of functions on the curve;
 (2) the class of holomorphic functions on the domains; and (3) the regularity
 of the curve.  We have chosen the class of the functions on the curve to be
 in the Sobolev $H^{1/2}$ space based on results of Y.-Z. Huang \cite{Huang}.  The regularity of the
 curve is determined by the class of border-type riggings.  In this section and
 the next, we will show that with this choice, the natural class of functions is the
 Dirichlet space and the riggings are quasisymmetries.  On the other hand,
 the perfect correspondence between the three choices can be seen as further evidence of the
 naturality of Huang's choice.

 If one sews on caps using quasisymmetric riggings, then the resulting seam is a quasicircle
 in the Riemann surface (in this section, a sphere). Thus, we must extend the formula to quasicircles, or at the very least to
 WP-class quasicircles.
 Schippers and Staubach extended the Plemelj-Sokhotski decomposition to quasicircles in the sphere \cite{SchippersStaubach_Osborn},
 using a limiting integral (however the jump in terms of the limiting values though likely
 has not been proven).
 In the case of WP-class quasicircles, we showed that the curve is rectifiable and the jump
 formula holds for the ordinary Cauchy integral \cite{RSS_WPjump}.
  We outline these results here.

Let $\Gamma$ be a quasicircle in the sphere not containing $0$
   or $\infty$, and let $\Omega^+$ and $\Omega^-$ be the bounded and unbounded components of the complement respectively.

  \begin{definition} \label{de:harmDirichlet}
  The harmonic Dirichlet space $\mathcal{D}(\Omega^\pm)_{\mathrm{harm}}$ is the set of all complex-valued
  harmonic functions $h$ on $\Omega^\pm$ such that
  \[  \iint_{\Omega^\pm} \left( \left| \frac{\partial h}{\partial z} \right|^2 +
  \left| \frac{\partial h}{\partial \bar{z}} \right|^2 \right) dA_z < \infty  \]
  where $dA_z$ denotes Lebesgue measure with respect to $z$.
  On $\Omega^-$ we adopt the convention that $h(1/z)$ is harmonic in a neighbourhood of
  $0$.
   The Dirichlet space $\mathcal{D}(\Omega^\pm)$ is those elements of $\mathcal{D}(\Omega^\pm)_{\mathrm{harm}}$ which are holomorphic.   These spaces of functions are
  conformally invariant.
 \end{definition}
 We endow $\mathcal{D}(\Omega^+)_{\mathrm{harm}}$ with the norm
 \[  \| h\|^2 =  |h(0)|^2 +  \iint_{\Omega^\pm} \left( \left| \frac{\partial h}{\partial z} \right|^2 +
  \left| \frac{\partial h}{\partial \bar{z}} \right|^2 \right) dA_z < \infty  \]
 and similarly for $\mathcal{D}(\Omega^-)_{\mathrm{harm}}$, except that we replace
 $h(0)$ with $h(\infty)$.  We give $\mathcal{D}(\Omega^\pm)$ the restriction of this
 norm.

 We obtain boundary values of such functions in the following way.
 Fixing a point in $\Omega^+$ (say $0$), consider the set of hyperbolic
 geodesics $\gamma_\theta$ emanating from $0$ (equivalently, orthogonal
 curves to the level curves of Green's function with singularity at $0$).
 Each such geodesic terminates at a unique point on the boundary.  For
 any $h \in \mathcal{D}(\Omega^+)_{\mathrm{harm}}$ the limit of $h$ along
 such a ray exists for almost every ray \cite{Osborn, SchippersStaubach_Grunsky_quasicircle}.  This is independent
 of the choice of singular point.  In fact this can be done for any Jordan
 curve.  Call this set of boundary values $\mathcal{H}(\Gamma)$. Conversely,
 any element of $\mathcal{H}(\Gamma)$ has a unique harmonic extension with
 finite Dirichlet energy.  We endow $\mathcal{H}(\Gamma)$ with the norm
 inherited from $\mathcal{D}(\Omega^+)_{\mathrm{harm}}$. The set of
  boundary values of $\mathcal{D}(\Omega^+)_{\mathrm{harm}}$ obtained as above
  is identical with the space $\mathcal{H}(\Gamma)$ in the sense of $\mathrm{Definition}$ $\ref{de:sobolevboundary},$ if we identify $\Omega^+$ with the Riemann surface.

 Note however that this can be done by extending instead to $\Omega^-$.
  Given $h \in \mathcal{H}(\Gamma)$, let $h_{\Omega^\pm}$ denote the unique
  harmonic extensions to $\Omega^\pm$.
 For quasicircles, we have
 \begin{theorem}[\cite{SchippersStaubach_Osborn}]  Let $\Gamma$
  be a quasicircle not containing $0$ or $\infty$.
 There is a constant $C$ depending only on
  $\Gamma$ such that for any $h \in \mathcal{H}(\Gamma)$
  \begin{equation} \label{eq:out_in_comparison}
   \frac{1}{C} \| h_{\Omega^-} \| \leq \| h_{\Omega^+} \| \leq C \| h_{\Omega^-} \|.
  \end{equation}

  Conversely, if $\Gamma$ is a Jordan curve not containing $0$ and $\infty$
  such that the estimate
  $(\ref{eq:out_in_comparison})$ holds for some $C$, then $\Gamma$ is a quasicircle.
 \end{theorem}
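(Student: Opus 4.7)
The strategy is to use conformal welding (Theorem \ref{th:welding}) to reduce both directions to the mapping properties of a composition operator on $\mathcal{H}(\mathbb{S}^1) = H^{1/2}(\mathbb{S}^1)$. Choose conformal maps $F : \disk^+ \to \Omega^+$ and $G : \disk^- \to \Omega^-$, normalized by $F(0)=0$ and $G(\infty)=\infty$, and set $\phi := G^{-1} \circ F |_{\mathbb{S}^1}$. When $\Gamma$ is a quasicircle, $\phi$ is quasisymmetric; for an arbitrary Jordan curve $\phi$ is only an orientation-preserving homeomorphism of $\mathbb{S}^1$, and the welding theorem identifies quasisymmetry of $\phi$ with $\Gamma$ being a quasicircle.

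For either direction we transport the norm on $\mathcal{H}(\Gamma)$ to the circle. Given $h \in \mathcal{H}(\Gamma)$, the harmonic extension $h_{\Omega^+}$ pulls back via $F$ to the harmonic extension of $h \circ F|_{\mathbb{S}^1}$ on $\disk^+$; conformal invariance of the Dirichlet integral together with the normalization $F(0)=0$ gives $\|h_{\Omega^+}\| = \|h \circ F|_{\mathbb{S}^1}\|_{\mathcal{H}(\mathbb{S}^1)}$, using the classical identification of $\mathcal{H}(\mathbb{S}^1)$ with the harmonic Dirichlet space of $\disk^+$ (with base-point value at $0$). Symmetrically, $\|h_{\Omega^-}\| = \|h \circ G|_{\mathbb{S}^1}\|_{\mathcal{H}(\mathbb{S}^1)}$. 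Since $h \circ G|_{\mathbb{S}^1} = (h \circ F|_{\mathbb{S}^1}) \circ \phi^{-1}$, the claimed equivalence of norms is equivalent to $\mathcal{C}_{\phi}$ being a bounded automorphism of $\mathcal{H}(\mathbb{S}^1)$ with norm bound depending only on $\phi$ and hence only on $\Gamma$.

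In the forward direction, quasisymmetry of $\phi$ yields boundedness of $\mathcal{C}_{\phi}$ on $\mathcal{H}(\mathbb{S}^1)$ (the universal disk case of Theorem \ref{th:qs_riggings_preserve_H}), producing the constant $C$. For the converse, the hypothesized equivalence transfers through the same identifications to show that $\mathcal{C}_{\phi}$ and $\mathcal{C}_{\phi^{-1}}$ are bounded on $\mathcal{H}(\mathbb{S}^1)$, whereupon the classical Nag--Sullivan-type characterization --- a circle homeomorphism induces a bounded composition operator on $H^{1/2}(\mathbb{S}^1)$ exactly when it is quasisymmetric --- forces $\phi \in \qs(\mathbb{S}^1)$, and Theorem \ref{th:welding} then identifies $\Gamma$ as a quasicircle. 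The main obstacle is precisely this converse characterization of quasisymmetries via $H^{1/2}$-boundedness: the forward direction is a formal consequence of welding plus known composition bounds, whereas the converse rests on nontrivial harmonic analysis on $\mathbb{S}^1$. A minor technical point is that the base-point contributions $|h(0)|^2$ and $|h(\infty)|^2$ must be matched with the zero Fourier modes of $h \circ F$ and $h \circ G$; the normalizations $F(0)=0$ and $G(\infty)=\infty$ do this directly, while a Harnack/Poincar\'e estimate for finite-energy harmonic functions absorbs any remaining slack into the constant $C$.
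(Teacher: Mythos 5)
This theorem is imported from \cite{SchippersStaubach_Osborn}; the present paper states it without proof, so your proposal has to be measured against the argument in that reference. Your reduction --- transport both norms to $\mathcal{H}(\mathbb{S}^1)$ by the Riemann maps $F$ and $G$ using conformal invariance of the Dirichlet integral, so that the two-sided estimate becomes boundedness of $\mathcal{C}_{\phi}$ and $\mathcal{C}_{\phi^{-1}}$ on $H^{1/2}(\mathbb{S}^1)$ for the welding homeomorphism $\phi = G^{-1}\circ F$ --- is the correct skeleton, and for the \emph{converse} it is essentially the route taken: boundedness of the induced composition operator forces $\phi$ to be quasisymmetric by Theorem \ref{th:NagSullivan_bounded}, and then $\Gamma$ is a quasicircle by Theorem \ref{th:welding} together with its uniqueness clause (which rests on conformal removability of quasicircles; a word is needed here, since a priori a non-quasicircle could share a quasisymmetric welding with a quasicircle). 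Your handling of the base-point terms via the normalizations and a Poincar\'e inequality also matches the remark following Theorem \ref{th:qs_riggings_preserve_H}.

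The gap is in the forward direction, at the step $h\circ G = (h\circ F)\circ\phi^{-1}$. The boundary function $h\circ F$ is defined only almost everywhere on $\mathbb{S}^1$, and a quasisymmetric homeomorphism need not be absolutely continuous: the welding homeomorphism of a quasicircle can carry a set of full measure to a null set (equivalently, the harmonic measures of $\Omega^+$ and $\Omega^-$ can be mutually singular, as happens for the snowflake). So precomposing an a.e.-defined function with $\phi^{-1}$ does not yield a well-defined a.e. function, and it is not even clear a priori that $h_{\Omega^-}$ exists. This well-definedness of the transmission $h_{\Omega^+}\mapsto h_{\Omega^-}$ is the actual content of the theorem, not a formality. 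In \cite{SchippersStaubach_Osborn} it is handled by avoiding boundary values altogether: one composes $h_{\Omega^+}$ with an Ahlfors quasiconformal reflection $\Omega^+\to\Omega^-$, invokes quasi-invariance of the Dirichlet integral under quasiconformal maps, and takes the harmonic (energy-minimizing) part of the result; this simultaneously constructs $h_{\Omega^-}$ and yields the bound with $C$ depending only on the reflection constant of $\Gamma$. Your route can be repaired, but only by replacing ``almost everywhere'' with ``quasi-everywhere'': $H^{1/2}$ functions have quasicontinuous representatives defined up to sets of logarithmic capacity zero, and quasiconformal maps preserve such null sets, so the composition is well defined at that level. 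Either way, an essential ingredient beyond Theorems \ref{th:welding} and \ref{th:NagSullivan_bounded} is required, and your write-up does not supply it.
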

 Thus we will not distinguish between the norms obtained from $\Omega^+$ and $\Omega^-$.

 \begin{remark}
  In the case of WP-class riggings, the boundary values exist in
   a much stronger sense \cite{RSS_WPjump}.  The boundary values are in
   a certain Besov space, which is norm equivalent to $\mathcal{H}(\Gamma)$, by Theorem 2.9 in \cite{RSS_WPjump}.
  \end{remark}

In connection to the boundedness of the operator $C_F$ the following theorem will be useful.

 \begin{theorem}\label{th:comp_isomorphism}  Let $\Omega$ and $D$ be
  quasidisks.
  If $F: \Omega \rightarrow D$ is a conformal bijection then
   $\mathcal{C}_F:\mathcal{H}(\partial D) \rightarrow \mathcal{H}(\partial\Omega)$
   is a bounded isomorphism.
  \end{theorem}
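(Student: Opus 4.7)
The plan is to reduce the theorem to the conformal invariance of the Dirichlet integral by working with harmonic Dirichlet extensions. Since $\Omega$ and $D$ are quasidisks, hence Jordan domains, Carath\'eodory's theorem extends $F$ to a homeomorphism $\overline{\Omega}\to\overline{D}$; in particular $F|_{\partial\Omega}$ is a homeomorphism $\partial\Omega\to\partial D$, so $\mathcal{C}_F(h)=h\circ F$ is pointwise well defined for any function $h$ on $\partial D$.

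Given $h\in\mathcal{H}(\partial D)$, take its harmonic Dirichlet extension $H\in\mathcal{D}(D)_{\mathrm{harm}}$ described after Definition \ref{de:harmDirichlet}. Since $F$ is holomorphic, $\tilde H:=H\circ F$ is harmonic on $\Omega$, and the standard change-of-variables calculation gives
\[
\iint_\Omega\!\left(\left|\frac{\partial\tilde H}{\partial z}\right|^2+\left|\frac{\partial\tilde H}{\partial\bar z}\right|^2\right)dA_z \;=\; \iint_D\!\left(\left|\frac{\partial H}{\partial w}\right|^2+\left|\frac{\partial H}{\partial\bar w}\right|^2\right)dA_w,
\]
because the $|F'|^2$ factor arising from the Jacobian exactly cancels the conformal distortion of the gradient. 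Thus $\tilde H\in\mathcal{D}(\Omega)_{\mathrm{harm}}$ with Dirichlet seminorm preserved. Moreover, $F$ sends hyperbolic geodesics of $\Omega$ emanating from a basepoint $p$ to hyperbolic geodesics of $D$ emanating from $F(p)$, with matching boundary endpoints under the Carath\'eodory extension; since non-tangential limits of harmonic functions are invariant under conformal change of variable, the boundary values of $\tilde H$ in the sense of Definition \ref{de:harmDirichlet} coincide almost everywhere with $h\circ F$. Therefore $\mathcal{C}_F(h)\in\mathcal{H}(\partial\Omega)$.

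Boundedness now reduces to the pointwise term in the norm: norms obtained using different interior basepoints for the point-evaluation part are mutually equivalent, because evaluation at any interior point is a bounded linear functional on $\mathcal{D}(\Omega)_{\mathrm{harm}}$ modulo the Dirichlet seminorm. Choosing basepoints $p_\Omega$ and $F(p_\Omega)$ that correspond under $F$ makes the norm identity exact for that choice, and the equivalence of basepoints absorbs the rest. Applying the same construction to the conformal bijection $F^{-1}\colon D\to\Omega$ produces a bounded operator $\mathcal{C}_{F^{-1}}$ which is a two-sided inverse of $\mathcal{C}_F$, so $\mathcal{C}_F$ is a bounded isomorphism.

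The only genuinely delicate step is the boundary-value identification: one must check that the Carath\'eodory extension of $F$ transports the hyperbolic-geodesic approach regions on $\Omega$ onto those on $D$ up to sets of boundary measure zero, so that the a.e.-defined boundary values of $\tilde H$ really do equal $h\circ F$. This is where the quasidisk hypothesis enters essentially: for quasicircles the boundary value theory of $\mathcal{D}(\Omega^\pm)_{\mathrm{harm}}$ is well-behaved, and the two-sided control furnished by \eqref{eq:out_in_comparison} ensures that the limit processes on $\Omega$ and $D$ are genuinely conjugated by the conformal bijection $F$. Once this is established, the rest of the argument is conformal invariance of the Dirichlet integral together with routine functional-analytic bookkeeping.
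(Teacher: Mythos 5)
Your proposal is correct and follows essentially the same route as the paper: the paper's proof likewise takes the norm defined via harmonic extensions to the inside, invokes the conformal invariance of the Dirichlet integral, and then passes to equivalent norms. You have simply spelled out the details (the Carath\'eodory extension, the a.e.\ identification of boundary values along geodesic approach regions, and the basepoint equivalence) that the paper leaves implicit.
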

  \begin{proof}
   If we choose the norm obtained from the harmonic extensions to the inside, then
   this follows immediately from the conformal invariance of the Dirichlet integral.
   The claim thus follows for equivalent choices of norm.
  \end{proof}

  We can now turn to the jump decomposition.  General quasicircles are not rectifiable, so the
  Cauchy integral does not make sense on them directly.  Given a quasicircle $\Gamma$, let
  $f:\disk^+ \rightarrow \Omega^+$ and $g:\disk^- \rightarrow \Omega^-$ be conformal maps.
  Let $\gamma_r$ denote the circle $|w|=r$ traced counterclockwise.
  We define for $h \in \mathcal{H}(\Gamma)$ and $z \notin \Gamma$
  \begin{align}  \label{eq:limiting_integrals}
   J(\Gamma)h(z) & = \lim_{r \nearrow 1} \frac{1}{2 \pi i} \int_{f(\gamma_r)}
   \frac{h(\zeta)}{\zeta - z} d\zeta \nonumber \\
   & =  \lim_{r \searrow 1} \frac{1}{2 \pi i} \int_{g(\gamma_r)}
   \frac{h(\zeta)}{\zeta - z} d\zeta.
  \end{align}
  It is not obvious that one gets the same result using the maps $f$ and $g$.
  \begin{theorem}[\cite{SchippersStaubach_Grunsky_quasicircle}]  \label{th:decomposition}
   The limits $(\ref{eq:limiting_integrals})$ exist, are equal, and are independent
   of the choice of conformal map $f$ or $g$.  Furthermore, for any $h \in \mathcal{H}(\Gamma)$
   the restrictions of $J(\Gamma) h$ to $\Omega^\pm$ are in $\mathcal{D}(\Omega^\pm)$
   respectively.

   Finally, the map
   \begin{align*}
    K: \mathcal{H}(\Gamma) & \longrightarrow \mathcal{D}(\Omega^+) \oplus \mathcal{D}(\Omega^-) \\
    h & \longmapsto \left( \left. J(\Gamma) h \right|_{\Omega^+}, - \left. J(\Gamma) h \right|_{\Omega^-}
     \right)
   \end{align*}
   is a bounded isomorphism.
  \end{theorem}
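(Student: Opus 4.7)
The plan is to reduce the problem to the classical Riesz-type decomposition $\mathcal{H}(\mathbb{S}^1) = \mathcal{D}(\disk^+) \oplus \mathcal{D}(\disk^-)$ recorded in \eqref{eq:HSonedecomp}, using the conformal maps $f$, $g$ together with the conformal invariance of the harmonic Dirichlet space (Theorem \ref{th:comp_isomorphism}). Set $\phi := g^{-1} \circ f|_{\mathbb{S}^1}$, which is a quasisymmetry by conformal welding (Theorem \ref{th:welding}). Under the pull-back $\mathcal{C}_f$, the subspaces $\mathcal{D}(\Omega^+)|_\Gamma$ and $\mathcal{D}(\Omega^-)|_\Gamma$ of $\mathcal{H}(\Gamma)$ are carried to $\mathcal{D}(\disk^+)|_{\mathbb{S}^1}$ and $\mathcal{C}_\phi\bigl(\mathcal{D}(\disk^-)|_{\mathbb{S}^1}\bigr)$ respectively. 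Thus the bounded isomorphism assertion for $K$ is equivalent to the topological direct sum decomposition
\[
\mathcal{H}(\mathbb{S}^1) \;=\; \mathcal{D}(\disk^+)|_{\mathbb{S}^1} \;\oplus\; \mathcal{C}_\phi\bigl(\mathcal{D}(\disk^-)|_{\mathbb{S}^1}\bigr),
\]
which is the Segal--Wilson polarization for the quasisymmetry $\phi$ (compare Section \ref{se:SW_Grunsky}).

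To establish existence and holomorphy of $J(\Gamma)h|_{\Omega^\pm}$, I would make the change of variables $\zeta = f(w)$ in the first integral of \eqref{eq:limiting_integrals}, obtaining
\[
\frac{1}{2\pi i}\int_{\gamma_r} \frac{\widetilde{H}(w)\, f'(w)}{f(w)-z}\,dw,
\]
where $\widetilde{H} := h_{\Omega^+} \circ f \in \mathcal{D}(\disk^+)_{\mathrm{harm}}$ is the pull-back of the harmonic extension of $h$. Writing $\widetilde{H} = \widetilde{H}^{(h)} + \overline{\widetilde{H}^{(a)}}$ with $\widetilde{H}^{(h)}, \widetilde{H}^{(a)} \in \mathcal{D}(\disk^+)$, the holomorphic piece is computed by Cauchy's formula on $\gamma_r$, while the antiholomorphic piece is bounded uniformly in $r$ by Cauchy--Schwarz in $L^2(\gamma_r)$ using the Dirichlet norm of $\widetilde{H}^{(a)}$ and the regularity of the kernel $f'(w)/(f(w)-z)$ away from the single pole $w = f^{-1}(z) \in \disk^+$. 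Passing to the limit $r \nearrow 1$ then yields a function holomorphic in $z \in \Omega^+$ with finite Dirichlet integral; an entirely parallel argument via $g$ handles $\Omega^-$. Independence from the choice of $f$ or $g$ within its M\"obius class is built into the change of variables.

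The crucial point is the jump identity $\bigl.J(\Gamma)h\bigr|_{\Omega^+} - \bigl.J(\Gamma)h\bigr|_{\Omega^-} = h$ in $\mathcal{H}(\Gamma)$. I would establish this by a bilateral approximation: for $r < 1 < r'$ both close to $1$, the smooth Jordan curves $f(\gamma_r)$ and $g(\gamma_{r'})$ bound an annular neighbourhood of $\Gamma$. On each of these rectifiable curves the classical Plemelj--Sokhotski formula applies; subtracting the two and letting $r \to 1^-$, $r' \to 1^+$, the remainder is controlled by harmonic Dirichlet norms on the shrinking annular region. The norm comparison \eqref{eq:out_in_comparison} between harmonic extensions from the two sides of $\Gamma$ ensures that this remainder tends to zero in $\mathcal{H}(\Gamma)$, so that the jump of $J(\Gamma)h$ across $\Gamma$ recovers $h$.

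Boundedness of $K$ then follows by composing Theorem \ref{th:comp_isomorphism} with the continuity of the disk projections $P(\disk^\pm)$ on $\mathcal{H}(\mathbb{S}^1)$. Injectivity is immediate from the jump identity. For surjectivity, given $(H_+, H_-) \in \mathcal{D}(\Omega^+) \oplus \mathcal{D}(\Omega^-)$, set $h := H_+|_\Gamma - H_-|_\Gamma \in \mathcal{H}(\Gamma)$ and verify $K(h) = (H_+, -H_-)$: Cauchy's formula on $f(\gamma_r)$ recovers $H_+$ inside and contributes $0$ to the outside, and symmetrically for $H_-$ on $g(\gamma_{r'})$, the joint limits giving the claim. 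The main obstacle throughout is the jump identity in Step 3: a generic quasicircle is not rectifiable, so $J(\Gamma)h$ cannot be interpreted as a singular integral on $\Gamma$ itself, and the agreement of the two one-sided limits must be extracted from the quasiconformal geometry of $\Gamma$ together with the two-sided norm equivalence \eqref{eq:out_in_comparison}, rather than from any direct Cauchy principal value on $\Gamma$.
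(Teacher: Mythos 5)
There is a genuine gap, and it is located exactly where you say the ``crucial point'' is. Your argument routes everything through the boundary jump identity $J(\Gamma)h|_{\Omega^+} - J(\Gamma)h|_{\Omega^-} = h$ ``in $\mathcal{H}(\Gamma)$,'' deriving injectivity of $K$ from it and surjectivity from its converse. But the paper states explicitly, immediately after Theorem \ref{th:decomposition}, that for general quasicircles it is \emph{unknown} whether $h = P(\Omega^+)h + P(\Omega^-)h$ holds on the boundary in any reasonable limiting sense; the same point is made in the introduction to Section \ref{se:jump} (``the jump in terms of the limiting values \dots has not been proven''), and it is only for WP-class quasicircles that the jump is established in \cite{RSS_WPjump}. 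Note also that $J(\Gamma)h|_{\Omega^\pm}$ live on disjoint open sets, so their ``difference'' only makes sense through boundary values taken along hyperbolic geodesics from each side --- which is precisely the open issue. Your proposed bilateral approximation (classical Plemelj--Sokhotski on $f(\gamma_r)$ and $g(\gamma_{r'})$, with the remainder on the shrinking annulus controlled by $(\ref{eq:out_in_comparison})$) does not engage with this difficulty: $h$ is defined only on $\Gamma$, the approximating curves carry restrictions of two \emph{different} harmonic extensions, and the estimate $(\ref{eq:out_in_comparison})$ compares global Dirichlet norms of those extensions but gives no control of the Cauchy-integral error terms on the degenerating annular collar. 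So the step on which your injectivity and surjectivity arguments rest is not available by the method you sketch, and if it were, it would settle the stated open problem.

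The intended proof of the isomorphism in \cite{SchippersStaubach_Grunsky_quasicircle} avoids the boundary jump altogether. It works with the limiting integrals $(\ref{eq:limiting_integrals})$ (your Step 2 is broadly in the right spirit for existence and holomorphy, via the splitting of the pulled-back harmonic extension into holomorphic and antiholomorphic parts), and then establishes bijectivity of $K$ through operator identities for the projections and composition operators --- in the notation of this survey, the boundedness results of Theorem \ref{th:comp_isomorphism}, the identity $P(\disk^-)\,\mathcal{C}_F\, P(\Omega^-)\,\mathcal{C}_{F^{-1}} = \mathrm{Id}$, and above all the Faber-type isomorphism $\mathbf{I}_F = P(\Omega^-)\,\mathcal{C}_{F^{-1}}: \mathcal{D}_*(\disk^-) \to \mathcal{D}_*(\Omega^-)$ of Theorem \ref{th:DavidSheniso}, whose surjectivity rests on Shen's $\ell^2$ theorem. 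Your Step 1 asserts the topological direct sum $\mathcal{H}(\mathbb{S}^1) = \mathcal{D}(\disk^+) \oplus \mathcal{C}_\phi\bigl(\mathcal{D}(\disk^-)\bigr)$ as ``the Segal--Wilson polarization for the quasisymmetry $\phi$'' as though it were known; for a general (non-WP) quasisymmetry this direct sum with bounded projections is essentially equivalent to the invertibility of the relevant block of $\hat{C}_\phi$, i.e.\ to the Shen/Faber isomorphism itself, so you are assuming the key nontrivial input rather than proving it. To repair the argument, replace Steps 1 and 3 by a proof of Theorem \ref{th:DavidSheniso} (or an equivalent invertibility statement) and deduce bijectivity of $K$ from it, dropping the boundary jump identity entirely.
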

  Denote the associated projection operators by
  \begin{align*}
   P(\Omega^{\pm}):\mathcal{H}(\Gamma) & \longrightarrow
   \mathcal{D}(\Omega^\pm) \\
   h & \longmapsto \pm \left. J(\Gamma) h \right|_{\Omega^\pm}.
  \end{align*}

  In the case of general quasicircles, it is as yet unknown whether
  \[  h(z) = P(\Omega^+) h (z) + P(\Omega^-) h(z)   \]
  for $z$ on the boundary in some reasonable limiting sense
  (see \cite{SchippersStaubach_Grunsky_quasicircle} for a specific conjecture).
  For WP-class quasicircles, we have the following result:

  \begin{theorem}[\cite{RSS_WPjump}]  Let $\Gamma$ be a $\mathrm{WP}$-class
   quasicircle.   For any $h \in \mathcal{H}(\Gamma)$,
   the boundary values of $h_\pm(z) = \pm P(\Omega^\pm) h(z)$ exist almost everywhere
   in a certain Besov space, and
   \[  h(z) = h_+(z) - h_-(z)  \]
   for almost all $z \in \Gamma$.
  \end{theorem}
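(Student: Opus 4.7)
The plan is to leverage two features of WP-class quasicircles that are unavailable in the general quasicircle setting: first, $\Gamma$ is rectifiable (in fact chord-arc, with additional integrability of the welding derivative coming from $\log f', \log g'\in \mathcal{D}$ for the conformal maps $f\colon\disk^+\to\Omega^+$ and $g\colon\disk^-\to\Omega^-$), so the classical Cauchy integral over $\Gamma$ is well defined pointwise for $z\notin\Gamma$; second, the welding enjoys enough regularity to transport boundary-value theory from $\mathbb{S}^1$ to $\Gamma$ and to land the resulting traces in a Besov space norm-equivalent to $\mathcal{H}(\Gamma)$.

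The first step is to identify the limiting integrals $(\ref{eq:limiting_integrals})$ with the ordinary Cauchy integral $\frac{1}{2\pi i}\int_\Gamma h(\zeta)(\zeta-z)^{-1}\,d\zeta$ for $z\notin\Gamma$. For $h$ in a dense smooth subclass (say, restrictions of functions holomorphic in a neighbourhood of $\Gamma$) this follows from dominated convergence applied to the approximating contours $f(\gamma_r)$, using rectifiability. The general case extends by density together with the boundedness of $K$ furnished by Theorem \ref{th:decomposition}.

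The second step is to pull everything back through $f$ and $g$ and use the classical fact that elements of $\mathcal{D}(\disk^\pm)$ admit nontangential boundary values almost everywhere, with traces forming $H^{1/2}(\mathbb{S}^1)$, which is canonically a Besov space. The WP-regularity of the welding transports this trace theory to $\Gamma$ via the composition operators $\mathcal{C}_f$ and $\mathcal{C}_g$, giving a quantitative strengthening of Theorem \ref{th:comp_isomorphism} in which a specific Besov norm is preserved. Hence $h_\pm = \pm P(\Omega^\pm)h$ acquire almost-everywhere boundary values on $\Gamma$ in a Besov space norm-equivalent to $\mathcal{H}(\Gamma)$. The jump identity $h=h_+-h_-$ a.e.\ on $\Gamma$ then reduces, via $f$ and $g$ together with the absolute continuity of their boundary extensions, to the classical Plemelj--Sokhotski decomposition on $\mathbb{S}^1$ applied to the canonical splitting $h\circ f, h\circ g\in \mathcal{H}(\mathbb{S}^1) = \mathcal{D}(\disk^+)\oplus\mathcal{D}(\disk^-)$, where the identity is a standard fact about Fourier series.

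The principal obstacle I anticipate is precisely the Besov-space existence of nontangential boundary values on $\Gamma$. As noted in the text, on generic quasicircles this is open, so the argument must genuinely exploit WP-regularity rather than mere quasiconformality. Concretely, one needs quantitative chord-arc estimates on $\Gamma$, together with a Poincar\'e-type inequality handling the constant term, playing a role analogous to the one $\cite[\text{Theorem 2.2}]{RSS_WPjump}$ plays in the proof of Theorem \ref{th:qs_riggings_preserve_H}. Once these estimates are secured, approximation and density close the argument and propagate the jump identity from smooth $h$ to all of $\mathcal{H}(\Gamma)$.
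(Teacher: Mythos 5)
The paper does not actually prove this theorem: it is quoted from \cite{RSS_WPjump}, and the surrounding text only records the strategy (a WP-class quasicircle is rectifiable, the limiting integrals collapse to the ordinary Cauchy integral, and the boundary values lie in a Besov space norm-equivalent to $\mathcal{H}(\Gamma)$ by Theorem 2.9 of \cite{RSS_WPjump}). Your first step is consistent with that strategy, but the proposal has two genuine gaps. The first is that the entire analytic content of the theorem --- that the nontangential boundary values of $h_\pm$ exist and lie in a Besov space on $\Gamma$ \emph{with respect to arclength} which is norm-equivalent to $\mathcal{H}(\Gamma)$ --- is precisely the step you defer as ``the principal obstacle.'' Pulling back by $f$ and invoking the classical $H^{1/2}$ trace theory on $\mathbb{S}^1$ only yields boundary values of $h_+\circ f$; converting this into membership of $h_+|_\Gamma$ in an arclength-defined Besov space requires quantitative boundary control of $|f'|$ (this is where $\log f'\in\mathcal{D}(\disk^+)$, rectifiability, and the attendant Hardy-space estimates enter), and that is the heart of \cite{RSS_WPjump}. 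Flagging the obstacle is not the same as overcoming it, so the core of the theorem remains unproved in your outline.

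The second gap is the claimed reduction of the jump identity to the classical Plemelj--Sokhotski decomposition on $\mathbb{S}^1$ applied to the canonical splitting $\mathcal{H}(\mathbb{S}^1)=\mathcal{D}(\disk^+)\oplus\mathcal{D}(\disk^-)$. This cannot work as stated: $h_+$ and $h_-$ live on opposite sides of $\Gamma$ and must be pulled back by \emph{different} conformal maps ($f$ and $g$ respectively), and the decomposition $\mathcal{H}(\Gamma)=\mathcal{D}(\Omega^+)\oplus\mathcal{D}(\Omega^-)$ does not correspond under either single map to the canonical Fourier splitting on the circle. Indeed, Section \ref{se:SW_Grunsky} shows that $\mathcal{C}_{F}\mathcal{D}_*(\Omega^-)$ is the \emph{graph} of the Grunsky operator inside $\mathcal{H}_*(\mathbb{S}^1)$, not the summand $\mathcal{D}_*(\disk^-)$; if your reduction were valid, the Grunsky operator would vanish identically. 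The viable route is to prove the jump directly on the rectifiable curve: establish the Plemelj formulas for the Cauchy integral over $\Gamma$ on a dense class of smooth $h$ (where rectifiability and absolute continuity of the boundary correspondence, via the F.~and M.~Riesz theorem, do the work), and then pass to general $h\in\mathcal{H}(\Gamma)$ using the boundedness of $K$ from Theorem \ref{th:decomposition} together with the boundedness of the boundary-trace map into the Besov space --- which is again the unproved first point. Your closing density remark gestures at this, but the middle of the argument routes through the circle in a way that does not close.
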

  The boundary values are defined non-tangentially,
  and the jump agrees with $h$ almost everywhere.

\end{subsection}
\begin{subsection}{Segal-Wilson Grassmannian and Grunsky operators} \label{se:SW_Grunsky}

Work of A. A. Kirillov and D. V. Yuri'ev \cite{KY1,KY2} in the differentiable case,
further developed by S. Nag and D. Sullivan \cite{NagSullivan}, resulted in a representation
of the set of quasisymmetries of the circle modulo M\"obius transformations of the circle
 by a collection of polarizing subspaces in an infinite-dimensional Siegel disk.
The group $\qs(\mathbb{S}^1)/\text{M\"ob}(\mathbb{S}^1)$ can be canonically identified
with the universal Teichm\"uller space $T(\disk)$, so
this representation is analogous in some ways to the classical period mapping
of Riemann surfaces.
Furthermore, it is an example of the Segal-Wilson
universal Grassmannian, and thus relates to the determinant line bundle.  We refer to this
as the KYNS period mapping, following Takhtajan and Teo \cite{Takhtajan_Teo_Memoirs}.

Takhtajan and Teo \cite{Takhtajan_Teo_Memoirs} showed that this is a holomorphic map
into the set of bounded operators on $\ell^{2}$,
and that the restriction to the WP-class universal Teichm\"uller space is an
inclusion of the latter into the Segal-Wilson universal Grassmannian, and is also
a holomorphic mapping of Hilbert manifolds.   They also
gave an explicit form for the period map in terms of so-called Grunsky operators, which are
an important construction in geometric function theory relating to univalence and
quasiconformal extendibility (see e.g. \cite{Pommerenkebook}).  Furthermore, they
showed that a determinant of a certain operator related to the Grunsky matrix
is a K\"ahler potential for the Weil-Petersson metric.
On the other hand, work of the authors \cite{RSS_Teich_embed, SchippersStaubach_Grunsky_quasicircle} shows that the Grunsky operator
relates to the jump decomposition and to the operator $\pi$ (work in progess) whose determinant
line bundle we are concerned with.

We therefore review some of the material on the Segal-Wilson Grassmannian and
the Grunsky operator. We then place it in the context of the operator
$\pi$ and the Dirichlet space $\mathcal{D}(\riem)$ of a Riemann surface,
and relate it to the choice of the analytic class of riggings.  In the next section,
we also indicate how this relates to the problem of characterizing the cokernel
of $\pi$. \bigskip

Let $\mathscr{V}$ be an infinite-dimensional separable complex Hilbert space and let
$$\mathscr{V}=V_{+}\oplus V_{-}$$
be its decomposition into the direct sum of infinite-dimensional closed subspaces $V_{+}$ and $V_{-}$. The \emph{Segal-Wilson universal Grassmannian} $\mathrm{Gr}(\mathscr{V})$ \cite{Segal-Wilson} is defined as the set of closed subspaces $W$ of $\mathscr{V}$ satisfying the following conditions.
\begin{itemize}
\item[(1)] The orthogonal projection $\mathrm{pr}_{+}: W\rightarrow V_{+}$ is a Fredholm operator.
\item[(2)] The orthogonal projection $\mathrm{pr}_{-}: W\rightarrow V_{-}$ is a Hilbert-Schmidt operator.
\end{itemize}
Equivalently, $W\in\mathrm{Gr}(\mathscr{V})$, if $W$ is the image of an operator
$T: V_{+}\rightarrow W$ such that $\mathrm{pr}_{+} T$
is Fredholm and $\mathrm{pr}_{-} T$ is Hilbert-Schmidt.
The Segal-Wilson Grassmannian $\mathrm{Gr}(\mathscr{V})$ is a Hilbert manifold modeled on
the Hilbert space of Hilbert-Schmidt operators from $V_+$ to $V_-$.

As discussed in Subsection \ref{se:jump}, we have the natural decomposition
$\mathcal{H}(\mathbb{S}^1) = \mathcal{D}(\disk^+) \oplus \mathcal{D}(\disk^-)$.
From here on, we shall denote the elements
of the Dirichlet space $\mathcal{D}(\disk^\pm)$ that vanish at the origin or at infinity, and
the elements of $\mathcal{H}(\mathbb{S}^1)$ which have zero average value, by $\mathcal{D}_*(\disk^{\pm})$ and $\mathcal{H}_*(\mathbb{S}^1)$ respectively. The decomposition
therefore continues to hold with these changes of notation. We then make
the identifications $\mathscr{V} = \mathcal{H}_*(\mathbb{S}^1)$ and
$V_\pm = \mathcal{D}_*(\disk^{\mp})$.

The space $\mathcal{H}_*(\mathbb{S}^1)$ has a natural symplectic pairing, given by
\[  \left( g,h \right) =  \frac{1}{2\pi} \int_{\mathbb{S}^1} g \cdot dh.  \]
Given $h \in \mathcal{H}(\mathbb{S}^1 )$ and an orientation-preserving homeomorphism $\phi$ we define
\begin{equation}
 \hat{C}_\phi h = h \circ \phi - \frac{1}{2 \pi } \int_{\mathbb{S}^1} h \circ \phi. \end{equation}
(The second term restores the average to zero after composition).
We have the following theorem, formulated by Nag and Sullivan \cite{NagSullivan}.
\begin{theorem}[\cite{NagSullivan}] \label{th:NagSullivan_bounded} Let $\phi:\mathbb{S}^1 \rightarrow \mathbb{S}^1$ be an orientation-preserving
 homeomorphism.  $\hat{C}_\phi$ is bounded  from
 $\mathcal{H}_*(\mathbb{S}^1)$ to $\mathcal{H}_*(\mathbb{S}^1)$ if and only
 if $\phi$ is a quasisymmetry.  Furthermore, for any quasisymmetry, $\hat{C}_\phi$
 is a symplectomorphism.
\end{theorem}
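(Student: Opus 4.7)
The statement splits into three pieces: sufficiency of quasisymmetry for boundedness, its necessity, and the symplectic property.

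For the sufficiency, I would realize $\mathcal{H}_*(\mathbb{S}^1)$ as the trace space of the harmonic Dirichlet space of $\disk$ via Poisson extension. Given $h \in \mathcal{H}_*(\mathbb{S}^1)$, let $H$ be its harmonic extension; its Dirichlet energy equals $\|h\|^2_{\mathcal{H}_*}$ up to a universal constant. By Theorem \ref{basicqcextension}, $\phi$ admits a quasiconformal extension $\Phi:\disk\to\disk$ of maximal dilatation $K = K(\phi)$. A direct pointwise computation with the Beltrami coefficient $\mu_\Phi$ gives the quasi-invariance of the Dirichlet integral,
\[  \iint_{\disk} |\nabla(H\circ\Phi)|^2\,dA \;\leq\; K \iint_{\disk} |\nabla H|^2\,dA.  \]
Since the Poisson extension of $h\circ\phi$ minimizes Dirichlet energy among all extensions with these boundary values, and $H\circ\Phi$ is such an extension, we conclude $\|h\circ\phi\|^2_{\mathcal{H}(\mathbb{S}^1)} \leq K\|h\|^2_{\mathcal{H}_*}$. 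Subtracting the mean to land back in $\mathcal{H}_*$ only contracts the norm, giving boundedness of $\hat C_\phi$.

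For the necessity, assume $\hat C_\phi$ is bounded; the goal is to recover the three-point cross-ratio bound of Definition \ref{quasisymmetriccircle}. The plan is to translate boundedness into a weighted inequality via the Douglas integral representation
\[  \|h\|^2_{\mathcal{H}_*} \asymp \iint_{\mathbb{S}^1\times\mathbb{S}^1} \frac{|h(z)-h(w)|^2}{|z-w|^2}\,|dz|\,|dw|,  \]
and then test it against a family of functions concentrated on triples of adjacent arcs of a common length. For each such triple one constructs a bump-type $h$ whose $\mathcal{H}_*$ norm is bounded by a universal constant and whose values on the three arcs differ by $O(1)$; the norm of $\hat C_\phi h$ then controls the ratio of lengths of the $\phi^{-1}$-images of the two outer arcs. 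Taking a supremum over all such triples yields the quasisymmetric estimate for $\phi^{-1}$, and hence for $\phi$ after inverting. This is the main obstacle: the test functions must simultaneously stay in $\mathcal{H}_*$ with uniformly controlled norm and detect arc-length distortion sharply, so some care is required with boundary behavior and with the logarithmic divergences typical of $H^{1/2}$.

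For the symplectic property, once $\phi$ is a quasisymmetry and $\hat C_\phi$ is bounded, I would argue by approximation. Smooth diffeomorphisms are dense in $\qs(\mathbb{S}^1)$ in a topology for which $\hat C_{\phi_n}$ converges to $\hat C_\phi$ strongly on $\mathcal{H}_*$. For smooth $\phi_n$, the identity
\[  (\hat C_{\phi_n}g, \hat C_{\phi_n}h) = \frac{1}{2\pi}\int_{\mathbb{S}^1} (g\circ\phi_n)\,d(h\circ\phi_n) = (g,h)  \]
is the usual change of variables, and the subtracted constants contribute nothing since the pairing annihilates constants. Joint continuity of the pairing on $\mathcal{H}_*\times\mathcal{H}_*$, together with the boundedness of $\hat C_\phi$, promotes the identity to the general quasisymmetric case.
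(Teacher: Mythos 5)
First, a point of reference: the paper does not prove this theorem but quotes it from Nag--Sullivan, so your attempt can only be compared with their argument. Your sufficiency direction is complete and is exactly the standard (and their) proof: a quasiconformal extension $\Phi$ from Theorem \ref{basicqcextension}, the quasi-invariance $\iint_{\disk}|\nabla(H\circ\Phi)|^2\,dA\leq K\iint_{\disk}|\nabla H|^2\,dA$, and the Dirichlet principle; since the seminorm on $\mathcal{H}_*(\mathbb{S}^1)$ ignores the constant term, the mean-subtraction is harmless. That part I accept.

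The other two parts have genuine gaps. For necessity, what you give is a plan rather than a proof: the assertion that one can build test functions of uniformly bounded $\mathcal{H}_*$-norm whose compositions with $\phi$ detect the length ratio of image arcs is precisely the quasi-invariance of the modulus of boundary quadrilaterals, and making it rigorous requires the extremal-length machinery (the $H^{1/2}$-energy of the extremal function of a quadrilateral computes its modulus, and bounded distortion of these moduli is equivalent to the cross-ratio condition of Definition \ref{quasisymmetriccircle}). You flag this difficulty yourself but do not resolve it, so the ``only if'' direction is not established as written. For the symplectic property, the approximation step is the weak point: you need smooth diffeomorphisms $\phi_n\to\phi$ with $\hat{C}_{\phi_n}\to\hat{C}_\phi$ strongly, i.e.\ $h\circ\phi_n\to h\circ\phi$ in $\mathcal{H}_*(\mathbb{S}^1)$ for each fixed $h$, and this is exactly the continuity of right-composition that fails for quasisymmetries (the paper's own Remark \ref{re:quasisymmetries_not_group}); at best one has uniform operator bounds plus convergence on a dense subspace, and passing a bilinear identity to the limit then needs additional care that you do not supply. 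A cleaner route, essentially the one in \cite{NagSullivan}, avoids approximation altogether: by Stokes' theorem $\tfrac{1}{2\pi}\int_{\mathbb{S}^1}g\,dh=\tfrac{1}{2\pi}\iint_{\disk}dG\wedge dH$ for any finite-energy extensions $G,H$ of $g,h$ (the right-hand side depends only on the boundary values), and since a quasiconformal self-map $\Phi$ of $\disk$ is differentiable a.e.\ and satisfies the change-of-variables formula, $\iint_{\disk}d(G\circ\Phi)\wedge d(H\circ\Phi)=\iint_{\disk}dG\wedge dH$, which is the symplectic invariance directly.
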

We will show below that for each choice of WP-class quasisymmetry $\phi$,
$W = \hat{C}_\phi \mathcal{D}_*(\disk^-)$ is an element of the universal Grassmannian.
One can also relate the set of such subspaces to the set of Lagrangian decompositions
$\mathcal{H}_*(\mathbb{S}^1) = W \oplus \overline{W}$.  These decompositions
are acted on by the group of symplectomorphisms.
 For details see \cite{NagSullivan,Takhtajan_Teo_Memoirs}.

 We can give the decomposition $W \oplus \overline{W}$ the following interpretation.
 From Theorem \ref{th:welding}, consider the conformal welding decomposition $\phi = G^{-1} \circ F$ (suitably
 normalized), and let $\Omega^{-} = G (\disk^-)$ be identified with a Riemann surface
 and $F$ (restricted to the boundary) be identified with the rigging.  In that case,
 the decomposition $\mathcal{H}_*(\mathbb{S}^1) =
  W \oplus \overline{W}$ is the pull-back of the decomposition
   $\mathcal{H}(\Gamma) = \mathcal{D}(\Omega^-) \oplus \overline{\mathcal{D}(\Omega^-)}$
 under $F$ (modulo constants).  That is, it is the pull-back
  into holomorphic and anti-holomorphic parts of the decomposition of the
 complex harmonic Dirichlet space.
 This interpretation only becomes possible with our Plemelj-Sokhotski decomposition for quasicircles (at the very least for the WP-class quasicircles).

Theorem \ref{th:NagSullivan_bounded} above
gives yet another reason that quasisymmetries are a natural choice of riggings:
\noindent \begin{theorem} \label{th:quasisymmetries_largest}
 Let $\riem^B$ be a bordered Riemann surface of type $(g,n)$. The space
 $\mathcal{R}(\qs(\mathbb{S}^1),\riem^B)$ is the largest class of riggings
 such that $\mathcal{C}_{\phi_i} \mathcal{H}(\partial_i \riem^B)
 \subseteq \mathcal{H}(\mathbb{S}^1)$ and $\mathcal{C}_{\phi_i}$ is bounded
 for all $i=1,\ldots,n$ and $\phi=(\phi_1,\ldots,\phi_n) \in
 \mathcal{R}(\qs(\mathbb{S}^1),\riem^B)$.
\end{theorem}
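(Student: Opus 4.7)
The plan is to verify the two halves of the "largest class" assertion. The forward containment, that riggings in $\mathcal{R}(\qs(\mathbb{S}^1),\riem^B)$ give bounded composition operators, is exactly Theorem \ref{th:qs_riggings_preserve_H} and requires no further argument. The content is therefore the converse: any orientation-preserving homeomorphism $\phi_i:\mathbb{S}^1 \to \partial_i \riem^B$ for which $\mathcal{C}_{\phi_i}:\mathcal{H}(\partial_i \riem^B) \to \mathcal{H}(\mathbb{S}^1)$ is bounded must in fact lie in $\mathcal{R}(\qs(\mathbb{S}^1),\riem^B)$. The strategy is to localize via a collar chart and invoke the Nag--Sullivan criterion (Theorem \ref{th:NagSullivan_bounded}).

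First I would fix, for each $i$, a collar chart $\zeta_i$ whose boundary restriction $\partial_i \riem^B \to \mathbb{S}^1$ is an analytic diffeomorphism (this exists by Schwarz reflection, as recorded just before Definition 2.1). By Definition \ref{de:sobolevboundary} the space $\mathcal{H}(\partial_i \riem^B)$ is defined precisely so that $\mathcal{C}_{\zeta_i^{-1}}:\mathcal{H}(\partial_i \riem^B) \to \mathcal{H}(\mathbb{S}^1)$ is a bounded isomorphism with bounded inverse (indeed, the norm is declared via pull-back by $\zeta_i^{-1}$).

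Next I would factor the composition operator. Setting $\tilde{\phi}_i := \zeta_i \circ \phi_i:\mathbb{S}^1 \to \mathbb{S}^1$ (an orientation-preserving homeomorphism of the circle), the identity $h \circ \phi_i = (h \circ \zeta_i^{-1}) \circ \tilde{\phi}_i$ yields
\[
\mathcal{C}_{\phi_i} \;=\; \mathcal{C}_{\tilde{\phi}_i} \circ \mathcal{C}_{\zeta_i^{-1}}.
\]
Since $\mathcal{C}_{\zeta_i^{-1}}$ is a bounded isomorphism, $\mathcal{C}_{\phi_i}$ is bounded on $\mathcal{H}(\partial_i \riem^B)$ if and only if $\mathcal{C}_{\tilde{\phi}_i}$ is bounded on $\mathcal{H}(\mathbb{S}^1)$. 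I would then pass to the zero-mean subspace: write $\mathcal{H}(\mathbb{S}^1) = \mathbb{C} \oplus \mathcal{H}_*(\mathbb{S}^1)$, and observe that the mean-subtraction operator is bounded on $\mathcal{H}(\mathbb{S}^1)$. For $h \in \mathcal{H}_*(\mathbb{S}^1)$ we have $\hat{C}_{\tilde{\phi}_i} h = \mathcal{C}_{\tilde{\phi}_i} h - \tfrac{1}{2\pi}\int_{\mathbb{S}^1} \mathcal{C}_{\tilde{\phi}_i} h$, so boundedness of $\mathcal{C}_{\tilde{\phi}_i}$ on $\mathcal{H}(\mathbb{S}^1)$ transfers to boundedness of $\hat{C}_{\tilde{\phi}_i}$ on $\mathcal{H}_*(\mathbb{S}^1)$. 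The only-if direction of Theorem \ref{th:NagSullivan_bounded} then forces $\tilde{\phi}_i \in \qs(\mathbb{S}^1)$, which by Definition \ref{de:riggings_border_model} (together with Remark \ref{re:riggings_border_model}, which certifies independence of the chosen collar chart) is exactly the condition $\phi_i \in \mathcal{R}(\qs(\mathbb{S}^1),\riem^B)$.

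The main obstacle is conceptual rather than technical: one must recognize the correct factorization that decouples the intrinsic geometry of the collar (captured by the analytic chart $\zeta_i$, which by its regularity contributes nothing to any failure of boundedness) from the candidate rigging $\phi_i$. Once this split is made, the deep work is done by Nag--Sullivan. The only lingering technicality is the careful handling of the one-dimensional space of constants when passing between $\mathcal{C}_\phi$ on $\mathcal{H}(\mathbb{S}^1)$ and $\hat{C}_\phi$ on $\mathcal{H}_*(\mathbb{S}^1)$; this is routine once one notes that composition preserves constants and that averaging is a bounded projection.
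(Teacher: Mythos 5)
Your proof is correct and follows essentially the same route as the paper: reduce to a self-homeomorphism of $\mathbb{S}^1$ by factoring out a reference map, pass to $\hat{C}$ on the mean-zero subspace, and invoke the only-if direction of the Nag--Sullivan theorem. The only (cosmetic) difference is that you use the collar chart $\zeta_i$ as the reference, concluding directly from Definition \ref{de:riggings_border_model}, whereas the paper factors through a fixed quasisymmetric rigging $\phi_i$ and then uses the group property of $\qs(\mathbb{S}^1)$.
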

\begin{proof}
 By Theorem \ref{th:qs_riggings_preserve_H} any $\phi \in \mathcal{R}(\qs(\mathbb{S}^1 ),\riem^B)$ has
 this property. Conversely fix a rigging $\phi=(\phi_1,\ldots,\phi_n) \in
 \mathcal{R}(\qs(\mathbb{S}^1),\riem^B).$ Now let $\psi = (\psi_1,\ldots,\psi_n)$
 be such that $\psi_i:\mathbb{S}^1 \rightarrow \partial_i \riem$ are
 orientation-preserving homeomorphisms, and
 $\mathcal{C}_{\psi_i}$ have images
 in $\mathcal{H}(\mathbb{S}^1)$ and
 are bounded.  Then $\mathcal{C}_{\phi_i^{-1} \circ \psi_i}:\mathcal{H}(\mathbb{S}^1)
 \rightarrow \mathcal{H}(\mathbb{S}^1)$ is bounded, so $\hat{\mathcal{C}}_{\phi_i^{-1} \circ \psi_i}$
 is bounded. By Theorem \ref{th:NagSullivan_bounded} $\phi_i^{-1} \circ \psi_i$
 is a quasisymmetry, which implies that $\psi_i =\phi_i\circ (\phi_i^{-1} \circ \psi_i )  \in \mathcal{H}(\partial_i \riem^B)$.
\end{proof}

Thus, if one accepts that $H^{1/2}(\partial_i \riem)$ is the correct set of boundary
values following Y.-Z. Huang \cite{Huang}, one sees that the quasisymmetries are the
set of possible riggings for which Fourier decompositions with
bounded projections exist.

Next, we need a trivialization of the Dirichlet space over the moduli space of all quasidisks.
\begin{theorem}[\cite{SchippersStaubach_Grunsky_quasicircle, ShenFaber}]  \label{th:DavidSheniso}
 Let $\Omega^-$ be a quasidisk containing $\infty$ and not containing $0$ in its closure. Choose $F \in \Oqc$ such that $\Omega^- = \hat{\mathbb{C}} \setminus \overline{F(\mathbb{D})}$. Then
 \[\mathbf{I}_F:=P(\Omega^-)\,\mathcal{C}_{F^{-1}}:\mathcal{D}_*(\disk^-) \longrightarrow \mathcal{D}_*(\Omega^-)  \]
 is a bounded linear isomorphism.
\end{theorem}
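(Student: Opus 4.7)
The plan is to realize $\mathbf{I}_F$ as the composition
\begin{equation*}
\mathcal{D}_*(\disk^-) \hookrightarrow \mathcal{H}(\mathbb{S}^1) \xrightarrow{\mathcal{C}_{F^{-1}}} \mathcal{H}(\Gamma) \xrightarrow{P(\Omega^-)} \mathcal{D}_*(\Omega^-),
\end{equation*}
where $\Gamma = F(\mathbb{S}^1) = \partial \Omega^-$, and then to establish bijectivity by playing the canonical decomposition \eqref{eq:HSonedecomp} on $\mathbb{S}^1$ against the Plemelj-Sokhotski decomposition on $\Gamma$ from Theorem \ref{th:decomposition}. Boundedness will be immediate from this factorization: the inclusion is continuous since elements of $\mathcal{D}_*(\disk^-)$ have boundary values in $\mathcal{H}(\mathbb{S}^1)$; the operator $\mathcal{C}_{F^{-1}}$ is a bounded isomorphism by Theorem \ref{th:comp_isomorphism} because $F$ extends to a conformal bijection between quasidisks; and $P(\Omega^-)$ is bounded by Theorem \ref{th:decomposition}.

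For injectivity, I would suppose $\mathbf{I}_F h = 0$ for some $h \in \mathcal{D}_*(\disk^-)$. Then $\mathcal{C}_{F^{-1}} h \in \mathcal{H}(\Gamma)$ is annihilated by $P(\Omega^-)$, so by Theorem \ref{th:decomposition} it equals $\tilde h \vert_\Gamma$ for some $\tilde h \in \mathcal{D}(\Omega^+)$ (modulo the constant-term convention). Pulling back by $F$, the function $\tilde h \circ F$ is holomorphic on $\disk^+$, has finite Dirichlet energy by conformal invariance of the Dirichlet integral, and has boundary values coinciding with those of $h$. Hence the boundary values of $h$ sit simultaneously in $\mathcal{D}(\disk^+)$ and $\mathcal{D}_*(\disk^-)$; by the direct sum in \eqref{eq:HSonedecomp} and the normalization $h(\infty)=0$, this forces $h=0$.

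For surjectivity, given $g \in \mathcal{D}_*(\Omega^-)$ I would transport its boundary values to $\mathbb{S}^1$ via $\mathcal{C}_F$ (bounded by Theorem \ref{th:comp_isomorphism}), obtaining $g \circ F \in \mathcal{H}(\mathbb{S}^1)$. Decomposing $g \circ F = u + v$ with $u \in \mathcal{D}(\disk^+)$ and $v \in \mathcal{D}_*(\disk^-)$ per \eqref{eq:HSonedecomp}, I set $h = v$. Then on $\Gamma$,
\begin{equation*}
h \circ F^{-1} = g - u \circ F^{-1},
\end{equation*}
and $u \circ F^{-1}$ is holomorphic on $\Omega^+$ with finite Dirichlet energy, so it lies in $\mathcal{D}(\Omega^+)$. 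Since $P(\Omega^-)$ kills elements of $\mathcal{D}(\Omega^+)$ and fixes elements of $\mathcal{D}_*(\Omega^-)$, this gives $\mathbf{I}_F h = g$. With $\mathbf{I}_F$ a bounded linear bijection between Hilbert spaces, the open mapping theorem supplies continuity of the inverse.

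The main obstacle will be the careful bookkeeping of constants: the decompositions in play are clean only modulo constants, and the normalizations $h(\infty)=0$ on $\disk^-$ and $g(\infty)=0$ on $\Omega^-$ must be reconciled through the conformal map $F$, which by hypothesis preserves $0$ rather than $\infty$. Once the constant conventions in the definition of $P(\Omega^-)$ and in Theorem \ref{th:decomposition} are aligned with those of $\mathcal{D}_*$, the argument reduces to the interplay of two direct-sum decompositions, which is the substantive analytic input provided by quasisymmetry theory and the jump formula for quasicircles.
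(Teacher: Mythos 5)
The paper itself does not prove this theorem: it is imported from \cite{ShenFaber} and \cite{SchippersStaubach_Grunsky_quasicircle}, with the remark that Shen's original argument treats the operator on $\ell^2$ via Faber coefficients, while Schippers and Staubach recast it in terms of the projections $P(\Omega^\pm)$ and the Dirichlet spaces. Your argument follows this second route and is essentially sound: boundedness from the factorization through $\mathcal{H}(\mathbb{S}^1)$ and $\mathcal{H}(\Gamma)$, and bijectivity from playing the decomposition \eqref{eq:HSonedecomp} against the decomposition of $\mathcal{H}(\Gamma)$ supplied by Theorem \ref{th:decomposition}, transported by the bounded isomorphism $\mathcal{C}_F$ of Theorem \ref{th:comp_isomorphism}. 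The one point you should make explicit is that both your injectivity and surjectivity steps rest on the identities $P(\Omega^-)\bigl(u\vert_\Gamma\bigr)=u$ for $u\in\mathcal{D}_*(\Omega^-)$ and $P(\Omega^-)\bigl(v\vert_\Gamma\bigr)=0$ for $v\in\mathcal{D}(\Omega^+)$: Theorem \ref{th:decomposition} as quoted only asserts that $K$ is a bounded isomorphism, and says nothing about how $K$ acts on boundary values of functions already holomorphic on one side. Since a general quasicircle need not be rectifiable, the Cauchy integral here is only the limiting integral \eqref{eq:limiting_integrals}, so these identities are not the classical Cauchy theorem; they do follow by applying the classical theorem on each approximating analytic level curve (using the holomorphic function itself as the harmonic extension on the relevant side) and passing to the limit, and they are established in \cite{SchippersStaubach_Grunsky_quasicircle}. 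With that input supplied, and the constant bookkeeping you already flag, the argument closes; note also that the open mapping theorem is not needed, since your surjectivity computation exhibits the explicit bounded two-sided inverse $P(\disk^-)\,\mathcal{C}_F$.
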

The striking fact that this is an isomorphism was first proven by Y. Shen \cite{ShenFaber},
where the operator was treated as acting on $\ell^2$.
Schippers and Staubach \cite{SchippersStaubach_Grunsky_quasicircle} gave the isomorphism
an interpretation in terms of projections and the Dirichlet space.

We now define the Grunsky operator.
\begin{definition}
  The Grunsky operator associated to $F \in \Oqc$ is
 \[   \mathrm{Gr}_F = P(\disk^+) \,\mathcal{C}_F \mathbf{I}_F:\mathcal{D}_*(\disk^-)  \longrightarrow \mathcal{D}_*(\disk^+).   \]
\end{definition}

What we give here requires that the projection $P(\Omega^-)$ is defined and bounded (Theorem \ref{th:decomposition}), the solution of the Dirichlet problem on
quasidisks with $\mathcal{H}(\Gamma)$ boundary data, the fact that boundary values
exist and are contained in $\mathcal{H}(\Gamma)$, and boundedness of the composition operator $\mathcal{C}_F$ \cite{SchippersStaubach_Grunsky_quasicircle}.  In the case of Weil-Petersson class riggings these results were established by the authors in \cite{RSS_WPjump}.
The Grunsky operator has several equivalent definitions, although a version
of the formula restricted to polynomials is classical.  An $L^2$ integral operator formulation
is due to Bergman and Schiffer and is also often
formulated on $\ell^2$ \cite{BergmanSchiffer,ShenGrunsky,Takhtajan_Teo_Memoirs}.
After work this can be shown to be equivalent for quasicircles
to the definition given here \cite{SchippersStaubach_Grunsky_quasicircle}.

The Grunsky operator was shown to be Hilbert-Schmidt precisely for WP-class
mappings $F$ by Y. Shen \cite{ShenGrunsky} and L. Takhtajan and L.-P. Teo independently \cite{Takhtajan_Teo_Memoirs}.
 Takhtajan and Teo also gave the connection to the
Segal-Wilson Grassmannian.  In our setting we can
give an explicit connection to the Dirichlet spaces of the domain $\Omega^-$,
where we treat $(\Omega^-,F)$ as a rigged surface.
\begin{theorem}
 Let $\phi$ be a $\mathrm{WP}$-class quasisymmetry, and $(F,G)$ the corresponding
 welding maps such that $\phi = G^{-1} \circ F$, $($we fix a normalization, say $F(0)=0, F'(0)=1, G(\infty)=\infty$$)$.
 Set $W = \mathcal{C}_{F} \mathcal{D}_*(\Omega^-) = \hat{\mathcal{C}}_\phi \mathcal{D}_*(\disk^-)$.
 We then have
 \begin{enumerate}
  \item $\mathcal{C}_F \mathbf{I}_F : \mathcal{D}_*(\disk^-)
  \rightarrow W$ is a bounded isomorphism;
  \item $P(\disk^-) \mathcal{C}_F \mathbf{I}_F = \mathrm{Id}$;
  \item $P(\disk^+) \mathcal{C}_F \mathbf{I}_F = \mathrm{Gr}_F$;
  \item $W$ is the graph of $\mathrm{Gr}_F$ in $\mathcal{H}_*(\mathbb{S}^1)$;
  \item $\mathrm{pr}_- = P(\disk^+): W \rightarrow \mathcal{D}_*(\disk^+)$ is  Hilbert-Schmidt;
  \item $\mathrm{pr}_+ = P(\disk^-): W \rightarrow \mathcal{D}_*(\disk^-)$ is Fredholm.
 \end{enumerate}
\end{theorem}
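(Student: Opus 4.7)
The plan is to deduce all six assertions from two main ingredients: the structural identity $\mathcal{C}_F \mathbf{I}_F \eta = \eta + \mathrm{Gr}_F \eta$, and the Hilbert--Schmidt characterization of the Grunsky operator for WP-class mappings. Observe first that (3) is literally the definition of $\mathrm{Gr}_F$, so there is nothing to prove there.

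To establish (2), fix $\eta \in \mathcal{D}_*(\disk^-)$ and apply the Plemelj--Sokhotski decomposition (Theorem \ref{th:decomposition}) to $\mathcal{C}_{F^{-1}} \eta \in \mathcal{H}_*(\Gamma)$, writing
\[
\mathcal{C}_{F^{-1}} \eta \;=\; P(\Omega^+)\, \mathcal{C}_{F^{-1}} \eta \,+\, P(\Omega^-)\, \mathcal{C}_{F^{-1}} \eta \;=\; P(\Omega^+)\, \mathcal{C}_{F^{-1}} \eta \,+\, \mathbf{I}_F \eta .
\]
Composing on $\mathbb{S}^1$ with $F$, which is bounded on $\mathcal{H}_*$ by Theorem \ref{th:qs_riggings_preserve_H}, produces
\[
\eta \;=\; \mathcal{C}_F\, P(\Omega^+)\, \mathcal{C}_{F^{-1}} \eta \,+\, \mathcal{C}_F\, \mathbf{I}_F \eta .
\]
The first summand is the pullback by the biholomorphism $F: \disk^+ \to \Omega^+$ of a holomorphic function on $\Omega^+$, hence lies in $\mathcal{D}_*(\disk^+)$; applying $P(\disk^-)$ annihilates it and yields (2). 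Combining (2) and (3) gives $\mathcal{C}_F \mathbf{I}_F \eta = \eta + \mathrm{Gr}_F \eta$; since the image of $\mathcal{C}_F \mathbf{I}_F$ on $\mathcal{D}_*(\disk^-)$ equals $\mathcal{C}_F \mathcal{D}_*(\Omega^-) = W$ by Theorem \ref{th:DavidSheniso}, this displays $W$ as the graph of $\mathrm{Gr}_F$, proving (4).

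Assertion (1) now follows by combining Theorem \ref{th:DavidSheniso} (which gives $\mathbf{I}_F$ as a bounded isomorphism) with Theorem \ref{th:comp_isomorphism} (which gives boundedness of $\mathcal{C}_F$ on the relevant Dirichlet spaces); injectivity into $W$ is immediate from (4) since $\eta$ is recovered as $P(\disk^-)(\eta + \mathrm{Gr}_F \eta)$. For (6), under the graph parametrization $\eta \mapsto \eta + \mathrm{Gr}_F \eta$ the projection $\mathrm{pr}_+ = P(\disk^-)$ becomes the identity on $\mathcal{D}_*(\disk^-)$, so it is a bounded isomorphism and a fortiori Fredholm of index zero.

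The essential analytic input is needed only for (5). Under the same parametrization, $\mathrm{pr}_- = P(\disk^+)$ becomes exactly the operator $\mathrm{Gr}_F: \mathcal{D}_*(\disk^-) \to \mathcal{D}_*(\disk^+)$, so the claim reduces to saying that the Grunsky operator is Hilbert--Schmidt. This is the theorem of Y. Shen and of L. Takhtajan--L.-P. Teo, which characterizes WP-class membership of $F$ precisely by this Hilbert--Schmidt property; it is the genuinely deep step, and it is the one place where WP-class regularity, as opposed to mere quasisymmetry, is indispensable. The remainder of the proof is purely organizational bookkeeping: tracking the transfer of boundary values across the quasicircle $\Gamma$ under pre- and post-composition with $F$ and $F^{-1}$, and verifying that each intermediate projection is well defined and bounded --- all of which is supplied by the WP-class Plemelj--Sokhotski decomposition of \cite{RSS_WPjump} together with Theorem \ref{th:DavidSheniso}.
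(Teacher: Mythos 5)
Your proof is correct and follows essentially the same route as the paper's: (3) by definition, (2) via the projection identity $P(\disk^-)\,\mathcal{C}_F\, P(\Omega^-)\,\mathcal{C}_{F^{-1}} = \mathrm{Id}$ of \cite{SchippersStaubach_Grunsky_quasicircle} (which you legitimately re-derive from the jump decomposition, since $\Gamma$ is here a WP-class quasicircle so the boundary jump formula of \cite{RSS_WPjump} applies), (4) from (1)--(3), and (5)--(6) from the invertibility of $\mathcal{C}_F \mathbf{I}_F$ together with the Shen/Takhtajan--Teo Hilbert--Schmidt characterization and the ideal property of Hilbert--Schmidt operators. No gaps.
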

\begin{proof}
 The first claim follows from Theorem \ref{th:DavidSheniso}.  The second claim appears in
 \cite{SchippersStaubach_Grunsky_quasicircle}.  The third claim is the definition of
 $\mathrm{Gr}_F$.  The fourth claim follows from the previous three.  The fifth
 claim follows from (3), the fact that $\mathcal{C}_F \mathbf{I}_F$ is invertible and
 the fact that $\mathrm{Gr}_F$ is Hilbert-Schmidt, since Hilbert-Schmidt
 operators form an ideal.  The final claim follows in the same way from (2).
\end{proof}
 As a corollary, we have the following result.
 Recall that $\mathcal{H}_*(\mathbb{S}^1)$
 is set of elements of $\mathcal{H}(\mathbb{S}^1)$
  with zero average value.  The \emph{Shale group} or restricted general linear
 group corresponding to $\mathcal{H}_*(\mathbb{S}^1)$
 is the set of invertible operators
 \begin{equation} \label{eq:GLres_defin}
  \text{GL}_{\mathrm{res}}(\mathcal{H}_*(\mathbb{S}^1)) = \left\{
 \left( \begin{array}{cc} a & b \\ c & d \end{array} \right) \in \text{GL}(\mathcal{H}
  (\mathbb{S}^1)) \,:\, b,c \ \ \text{Hilbert-Schmidt} \  \right\}
 \end{equation}
  where the block decomposition is with respect to $\mathcal{H}_*(\mathbb{S}^1) =
  \mathcal{D}_*(\disk^+) \oplus \mathcal{D}_*(\disk^-)$.
 This implies that $a$ and $d$ are Fredholm \cite[Appendix D]{Huang_CFT}.
 $\text{GL}_{\mathrm{res}}(\mathcal{H}_*(\mathbb{S}^1))$ acts on the Segal-Wilson Grassmannian.
 \begin{theorem}
  The composition operators $\hat{C}_\phi$ corresponding
  to $\mathrm{WP}$-class quasisymmetries $\qs_{\mathrm{WP}}(\mathbb{S}^1)$
  are the completion of the analytic diffeomorphisms in the Shale group $\mathrm{GL}_{\mathrm{res}}(\mathcal{H}_*(\mathbb{S}^1))$.
 \end{theorem}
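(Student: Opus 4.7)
The plan is to establish three facts, each leveraging the preceding theorem's identification of the off-diagonal blocks of $\hat{C}_\phi$ with Grunsky operators of the welding pair of $\phi$.

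First, I would check that $\hat{C}_\phi \in \mathrm{GL}_{\mathrm{res}}(\mathcal{H}_*(\mathbb{S}^1))$ for every $\phi \in \qso(\mathbb{S}^1)$. Invertibility and preservation of the symplectic form come from Theorem \ref{th:NagSullivan_bounded}. What remains is the Hilbert-Schmidt property of the blocks $b, c$ in the decomposition $\mathcal{H}_*(\mathbb{S}^1) = \mathcal{D}_*(\disk^+) \oplus \mathcal{D}_*(\disk^-)$. Writing $\phi = G^{-1} \circ F$ for the welding pair and using the preceding theorem, I would identify $b$ with $\mathrm{Gr}_F$ and, by swapping the roles of $F$ and $G$, $c$ with $\mathrm{Gr}_G$. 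The Shen--Takhtajan--Teo characterization then gives Hilbert-Schmidt precisely because $F, G \in \Oqco$, which is the defining property of $\qso(\mathbb{S}^1)$. The ``only if'' direction of the same characterization also shows that a quasisymmetry $\phi$ with $\hat{C}_\phi \in \mathrm{GL}_{\mathrm{res}}$ must already lie in $\qso(\mathbb{S}^1)$.

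Second, I would show that $\phi \mapsto \hat{C}_\phi$ is continuous from $\qso(\mathbb{S}^1)$, with its Hilbert-manifold topology (via Theorem \ref{th:Oqco_open_in_Oqc} and the welding correspondence), into $\mathrm{GL}_{\mathrm{res}}$ in the Shale topology. Operator-norm continuity of the diagonal blocks is immediate from Theorem \ref{th:NagSullivan_bounded}; the Hilbert-Schmidt continuity of the off-diagonal blocks reduces, via Step~1, to continuity of $F \mapsto \mathrm{Gr}_F$ in Hilbert-Schmidt norm, which is in Takhtajan--Teo. Combined with density of analytic diffeomorphisms in $\qso(\mathbb{S}^1)$ --- which follows because polynomials are dense in $\aonetwo$, so any WP-class $F$ is approximated in the chart $\chi$ by a polynomial $F''/F'$, and such a polynomial yields (via an entire antiderivative of $F'$) an analytically extending conformal map --- this gives density of the analytic $\hat{C}_\phi$'s inside the image of $\qso(\mathbb{S}^1)$ in $\mathrm{GL}_{\mathrm{res}}$.

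The main obstacle is the third step: closedness of the image of $\qso(\mathbb{S}^1)$ in $\mathrm{GL}_{\mathrm{res}}$. Given $\phi_n \in \qso(\mathbb{S}^1)$ with $\hat{C}_{\phi_n} \to T$ in the Shale topology, Shale convergence dominates operator norm, so $T$ is a bounded symplectomorphism and Theorem \ref{th:NagSullivan_bounded} produces a unique quasisymmetry $\phi$ with $T = \hat{C}_\phi$. One must then upgrade $\phi$ from a mere quasisymmetry to a WP-class one. This requires showing that the Hilbert-Schmidt convergence of the off-diagonal blocks descends through the Grunsky identification to $\aonetwo$-convergence of $F_n''/F_n'$, which in turn demands continuity of the ``inverse welding'' $\phi \mapsto F(\phi)$ in the correct topology. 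This inverse continuity, together with the Shen--Takhtajan--Teo equivalence invoked in its full ``if and only if'' form, is the subtle ingredient; once it is in hand, the closure argument concludes quickly.
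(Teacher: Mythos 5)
Your Step 1 is, in substance, the paper's entire proof, but one identification needs correcting: the off-diagonal block $b$ of $\hat{C}_\phi$ is \emph{not} $\mathrm{Gr}_F$ itself. What the paper uses is that the diagonal block $a$ is invertible and $\mathrm{Gr}_F = \bar{b}\,a^{-1}$; since Hilbert--Schmidt operators form an ideal and $a$ is bounded with bounded inverse, $b$ is Hilbert--Schmidt if and only if $\mathrm{Gr}_F$ is, and the Shen/Takhtajan--Teo characterization then gives the equivalence with $\phi\in\qso(\mathbb{S}^1)$ in both directions. You also do not need a second Grunsky operator $\mathrm{Gr}_G$ for the block $c$: because $\phi$ preserves $\mathbb{S}^1$, the operator $\hat{C}_\phi$ commutes with complex conjugation and its block matrix has the form $\left(\begin{smallmatrix} a & b\\ \bar{b} & \bar{a}\end{smallmatrix}\right)$, so $c=\bar{b}$ is Hilbert--Schmidt exactly when $b$ is. Your route through $\mathrm{Gr}_G$ would require a separate graph statement for $\hat{C}_\phi\mathcal{D}_*(\disk^+)$ that neither the paper nor your argument establishes.

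Your Steps 2 and 3 attempt more than the paper's proof does: the paper only cites that analytic diffeomorphisms embed in $\mathrm{GL}_{\mathrm{res}}(\mathcal{H}_*(\mathbb{S}^1))$ and then proves the characterization above; density and closedness are not argued there. Two concrete gaps in your additions. First, approximating $F''/F'$ by a polynomial $p$ and setting $F'=e^{P}$ yields an entire, locally injective map, but entirety together with univalence on $\disk$ does not give univalence on a disk of radius $r>1$, which is what an analytic rigging requires; the standard repair is to approximate by the dilations $F_r(z)=F(rz)/r$ with $r\nearrow 1$. Second, operator-norm continuity of $\phi\mapsto\hat{C}_\phi$ on the diagonal blocks is not ``immediate'' from Theorem \ref{th:NagSullivan_bounded}, which asserts only boundedness for each fixed $\phi$; continuity in the symbol is delicate (compare Remark \ref{re:quasisymmetries_not_group}: composition is not even continuous on $\qs(\mathbb{S}^1)$). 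You rightly flag the closure step as the unresolved one; note that as written it also presupposes that a Shale-limit of composition operators is again a composition operator, which Theorem \ref{th:NagSullivan_bounded} alone does not provide.
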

 \begin{proof} Let $\phi \in \qs(\mathbb{S}^1)$.
  The composition operator $\hat{C}_\phi$ has block decomposition
  \[  \hat{C}_\phi = \left( \begin{array}{cc} a & b \\ \bar{b} & \bar{a} \end{array}
   \right)  \]
    (\cite{Takhtajan_Teo_Memoirs}, see also \cite{SchippersStaubach_welding}).
   By \cite[Proposition D.5.3]{Huang_CFT} the composition operators corresponding
   to analytic diffeomorphisms $\phi$ embed in $\text{GL}_{\mathrm{res}}(\mathcal{H}_*(\mathbb{S}^1))$.
    By \cite{SchippersStaubach_welding}, $a$ is invertible, and Takhtajan
    and Teo showed that
    the Grunsky operator is given by $\overline{b} a^{-1}$  \cite{SchippersStaubach_welding, Takhtajan_Teo_Memoirs}. Since Hilbert-Schmidt
    operators form an ideal,  $\bar{b}$ and $b$
    are Hilbert-Schmidt if and only if the Grunsky operator is, which by
    Shen and Takhtajan-Teo holds if and only if $\phi \in \qs_{\mathrm{WP}}(\mathbb{S}^1)$.
 \end{proof}

The Grunsky operator and $\mathbf{I}_F$ are
 closely related to the operator $\pi$ defined in Section \ref{se:decompositions}.
We illustrate this in a very simple case, that of a Riemann surface biholomorphic to
the disk.  Let $F \in \Oqc$.  We identify the Riemann surface $\riem$ with the domain
$\Omega^-$ as above, and let $\left. F \right|_{\mathbb{S}^1}$ be the rigging.  Equivalently,
we consider the once-punctured Riemann sphere with puncture-type rigging $F$.

The following two results illustrate the connection (the second appears in \cite[Appendix D]{Huang} in the case of analytic rigging).
\begin{theorem}\label{Fiber identity} The operator $\pi$ corresponding to the rigged Riemann surface $(\Omega^-,F)$ satisfies
   $\mathcal{C}_F \circ  \pi \circ \mathbf{I}_F =\mathrm{Id}$. In particular, $\pi$ is a Fredholm operator with index equal to one.
 \end{theorem}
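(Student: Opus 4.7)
The plan is to verify the operator identity $\mathcal{C}_F \circ \pi \circ \mathbf{I}_F = \mathrm{Id}$ on $\mathcal{D}_*(\disk^-)$ by a formal computation whose only non-trivial ingredients are the jump decomposition on the quasicircle $\Gamma := \partial \Omega^-$ (Theorem \ref{th:decomposition}) and boundedness of quasisymmetric composition on $\mathcal{H}$ (Theorems \ref{th:qs_riggings_preserve_H} and \ref{th:comp_isomorphism}). The Fredholm index will then follow by a short bookkeeping argument.

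Fix $h \in \mathcal{D}_*(\disk^-)$, and set $u := h \circ F^{-1} \in \mathcal{H}(\Gamma)$, which makes sense by Theorem \ref{th:comp_isomorphism}. By definition
\[ g := \mathbf{I}_F h = P(\Omega^-)\, u \in \mathcal{D}_*(\Omega^-). \]
The jump decomposition (Theorem \ref{th:decomposition}) says $u = P(\Omega^+) u \big|_{\Gamma} + P(\Omega^-) u \big|_{\Gamma}$, so on $\Gamma$,
\[ g = u - w\big|_\Gamma, \qquad w := P(\Omega^+) u \in \mathcal{D}(\Omega^+). \]
Composing with the rigging $F|_{\mathbb{S}^1}$ and using $u \circ F = h$,
\[ \mathcal{C}_F\big(g\big|_\Gamma\big) = h - (w \circ F)\big|_{\mathbb{S}^1}, \]
and $w \circ F \in \mathcal{D}(\disk^+)$ by conformal invariance of the Dirichlet space under $F : \disk^+ \to \Omega^+$. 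Applying the formula $P(\Gamma)_+ = \mathcal{C}_{F^{-1}} \circ P(\disk^-) \circ \mathcal{C}_F$ from Section \ref{se:decompositions},
\[ \mathcal{C}_F\, \pi(g) = \mathcal{C}_F \mathcal{C}_{F^{-1}} P(\disk^-) \mathcal{C}_F\big(g\big|_\Gamma\big) = P(\disk^-)\big(h - (w \circ F)\big|_{\mathbb{S}^1}\big) = h, \]
since $h \in \mathcal{D}(\disk^-)$ is fixed by $P(\disk^-)$ and $(w \circ F)\big|_{\mathbb{S}^1} \in \mathcal{D}(\disk^+)$ is annihilated.

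For the Fredholm statement, constants in $\mathcal{D}(\Omega^-)$ clearly lie in $\ker \pi$: they pull back under $F$ to constants on $\mathbb{S}^1$, which lie in $\mathcal{D}(\disk^+)$ by convention and are killed by $P(\disk^-)$. Splitting $\mathcal{D}(\Omega^-) = \mathbb{C} \oplus \mathcal{D}_*(\Omega^-)$ via the value at $\infty$, it remains to analyze $\pi\big|_{\mathcal{D}_*(\Omega^-)}$. Since $\mathcal{C}_F : F_+(\Gamma) \to \mathcal{D}_*(\disk^-)$ is a bounded isomorphism (a consequence of Theorem \ref{th:qs_riggings_preserve_H} together with the definition of $F_+$) and $\mathbf{I}_F : \mathcal{D}_*(\disk^-) \to \mathcal{D}_*(\Omega^-)$ is a bounded isomorphism (Theorem \ref{th:DavidSheniso}), the identity $\mathcal{C}_F \circ \pi \circ \mathbf{I}_F = \mathrm{Id}$ forces $\pi\big|_{\mathcal{D}_*(\Omega^-)}$ to be a bounded isomorphism onto $F_+(\Gamma)$. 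Hence $\ker \pi = \mathbb{C}$ and $\mathrm{coker}\,\pi = 0$, so $\pi$ is Fredholm of index $1$.

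The only substantive analytic content is the availability of the jump decomposition on the generally very irregular quasicircle $\Gamma$ and the boundedness of $\mathcal{C}_F$ on $\mathcal{H}(\Gamma)$; once these are in hand, the identity is a transparent diagram chase and the Fredholm count is automatic.
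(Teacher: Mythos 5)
Your verification of the identity $\mathcal{C}_F \circ \pi \circ \mathbf{I}_F = \mathrm{Id}$ rests on the step ``$u = P(\Omega^+)u + P(\Omega^-)u$ on $\Gamma$,'' which you attribute to Theorem \ref{th:decomposition}. That theorem does not assert this: it says only that $K$ is a bounded isomorphism of $\mathcal{H}(\Gamma)$ onto $\mathcal{D}(\Omega^+)\oplus\mathcal{D}(\Omega^-)$, and the paper states explicitly, immediately after it, that for general quasicircles it is \emph{unknown} whether the boundary values of $P(\Omega^+)h$ and $P(\Omega^-)h$ sum to $h$ in any reasonable limiting sense; this boundary jump relation is established only for WP-class quasicircles. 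Since the theorem under discussion is stated for arbitrary $F \in \Oqc$, so that $\Gamma$ is a general (typically non-rectifiable) quasicircle, your identity $g = u - w|_{\Gamma}$ --- the one substantive step in your computation --- is precisely the open problem, not a consequence of the quoted results. The paper sidesteps this by citing the operator identity $P(\disk^-)\circ\mathcal{C}_F\circ P(\Omega^-)\circ\mathcal{C}_{F^{-1}} = \mathrm{Id}$ directly from \cite{SchippersStaubach_Grunsky_quasicircle}, where it is proved without passing through almost-everywhere boundary values of the two pieces. Your argument would be valid essentially verbatim if the statement were restricted to $F\in\Oqco$, where the jump formula of \cite{RSS_WPjump} applies.

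The Fredholm bookkeeping is sound conditional on the identity, and is in fact slightly cleaner than the paper's: you obtain $\ker\pi=\mathbb{C}$ and surjectivity simultaneously from the factorization of $\pi|_{\mathcal{D}_*(\Omega^-)}$ as the composition of the inverses of the two isomorphisms $\mathbf{I}_F$ and $\mathcal{C}_F|_{F_+(\partial\Omega^-)}$, whereas the paper computes the kernel separately via uniqueness of the jump decomposition. But this part inherits the gap above, since it takes the operator identity as input.
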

\begin{proof}
 The first claim follows from $\pi = \mathcal{C}_{F^{-1}} \circ P(\disk^-) \circ \mathcal{C}_F$,
 the definitions of $\mathbf{I}_F$ and the identity $P(\disk^-) \circ \mathcal{C}_F \circ
 P(\Omega^-) \circ \mathcal{C}_{F^{-1}} = \mathrm{Id}$ \cite{SchippersStaubach_Grunsky_quasicircle}.
 The cokernel is trivial, since it is isomorphic to
 \[  F^+(\partial \Omega^-)/\mathrm{Im}(\pi) = \mathcal{C}_{F^{-1}} \mathcal{D}_*(\disk^-)/\mathrm{Im}(\pi)
    \cong \mathcal{D}_*(\disk^-)/\mathcal{C}_F \mathrm{Im}(\pi). \]
 But $\mathrm{Im}(\mathcal{C}_F \circ \pi) = \mathrm{Im}(\mathcal{C}_F \circ \pi \circ \mathbf{I}_F) = \mathcal{D}_*(\disk^-)$
 where the first equality holds because $\mathbf{I}_F$ is an isomorphism, and the second follows from the
 fact that $\mathcal{C}_F \circ \pi \circ \mathbf{I}_F = \mathrm{Id}$.  The kernel is the space of constants functions, since for $h \in \mathcal{D}(\Omega^-)$ we have that $\pi h = 0$ if and only if $\mathcal{C}_{F^{-1}} P(\disk^-) \mathcal{C}_F h = 0$
 which is true if and only if $P(\disk^-) \mathcal{C}_F h = 0$.  This in turn holds if and only if $h \in \mathcal{D}_*(\Omega^+)$.
 By the uniqueness of the jump decomposition \cite{SchippersStaubach_Grunsky_quasicircle}, $h$ must
 be constant (since the constant function has the same jump as $h$).
\end{proof}

Finally, we make a few remarks on the condition that the off-diagonal
elements of $\mathrm{GL}_{\mathrm{res}}(\mathcal{H}_*(\mathbb{S}^1))$ are
Hilbert-Schmidt, and its relation to the determinant line bundle.
Details can be found in \cite[Appendix D]{Huang_CFT}.
One can obtain sections of the determinant line bundle of $\pi$
through the action of the central extension of $\mathrm{GL}_{\mathrm{res}}(\mathcal{H}_*(\mathbb{S}^1))$
on particular boundary curves of the Riemann surface. The central extension must have a
section $\sigma$ satisfying a cocycle condition.  Denoting elements of $\mathrm{GL}_{\mathrm{res}}(\mathcal{H}_*(\mathbb{S}^1))$ by $A_i$ and the elements of their
block decompositions by $a_i,b_i$, etc., this cocycle condition takes the form
\[  \sigma(A_1 A_2) = \mathrm{det}(a_1 a_2 a_3^{-1}) \sigma(A_3)  \]
for $A_3 = A_1 A_2$,  whenever $a_3$ is invertible \cite[p 257]{Huang_CFT}.  Recall that $a_3$ is invertible
for elements corresponding to composition by elements of $\qs_{\mathrm{WP}}(\mathbb{S}^1)$.
Multiplying the block matrices, we have $a_3 = a_1 a_2 + b_1 c_2$ so $a_1 a_2 a_3^{-1} = I - b_1 c_2 a_3^{-1}$.  Thus since trace-class operators
form an ideal, $a_1 a_2 a_3^{-1}$ has a determinant if and
only if $b_1 c_2$ is trace class, which occurs precisely when we require that
the off-diagonal elements $b_1$ and $c_2$ are Hilbert-Schmidt.  In other words,
the existence of a section of the central extension of $\mathrm{GL}_{\mathrm{res}}$
requires at a minimum the Hilbert-Schmidt condition on the off-diagonal elements.

Thus, we see that the natural analytic setting for conformal
field theory is the WP-class rigged moduli space.  However,
this leads to the following interesting question.  It appears that the determinant
line bundle of $\pi$ can be defined on the full quasiconformal rigged moduli space.  However,
the procedure above would only produce sections over leaves corresponding to perturbations
of fixed elements of the rigged moduli space by WP-class quasisymmetries.   Does this
extension have meaning within conformal field theory, and what is its interpretation
in Teichm\"uller theory?
\end{subsection}
\begin{subsection}{Grunsky operator and determinant line bundle}
 As we have stressed throughout the paper, the analytic issues are independent
 of the genus and number of boundary curves.  Nevertheless, one should
 demand that
 the techniques in the previous section ought to extend to higher genus $g$
 and number of boundary curves.  In this section we briefly indicate our results
 in this direction.

The constructions in the previous section can be generalized considerably.  Let
$\riem^P$ denote the Riemann sphere with $n$ fixed punctures.  Let $f = (f_1,\ldots,f_n) \in \Oqc(\riem^P)$ and let $\riem^B$ be the Riemann surface given by removing the closures
of $\cup_i f_i(\disk)$ from $\riem^P$.
The authors have shown that there is a natural generalization of the Grunsky
operator and the isomorphism $\mathbf{I}_f$ to maps
\[  \mathbf{Gr}_f : \bigoplus^n \mathcal{D}_*(\disk^-) \longrightarrow \bigoplus^n \mathcal{D}_*(\disk^+) \]
and
\[  \mathbf{I}_f: \bigoplus^n \mathcal{D}_*(\disk^-) \longrightarrow  \mathcal{D}(\riem^B).  \]
Also, we consider the map
\begin{align*}
 \mathcal{C}_f: \mathcal{D}(\riem^B) & \longmapsto \bigoplus^n \mathcal{H}_*(\mathbb{S}^1) \\
 h & \longmapsto \left( \left. h \circ f_1 \right|_{\mathbb{S}^1}, \cdots,
  \left. h \circ f_n \right|_{\mathbb{S}^1} \right).
\end{align*}
We then have the following result.
\begin{theorem}[\cite{RSS_Teich_embed}, \cite{RSS_Dirichletspace}]  \label{th:Grunsky_genus_zero}
The following statements hold$:$
 \begin{enumerate}
 \item  $\mathbf{I}_f$ is an isomorphism.
 \item Denote the
 space of bounded linear symmetric operators from
 $\bigoplus^n \mathcal{D}_*(\disk^-)$ to $\bigoplus^n \mathcal{D}_*(\disk^+)$ by $\mathcal{B}(n)$.
 There is a holomorphic injective map from the Teichm\"uller
 space of bordered surfaces of type $(0,n)$ into $T(\riem^P) \oplus \mathcal{B}(n)$.  The
 $\mathcal{B}(n)$ component of this map is given by $\mathbf{Gr}_f$.
 \item $\mathcal{C}_{f} \mathcal{D}(\riem^B)$ is the graph of the Grunsky operator
 $\mathbf{Gr}_f$.
\end{enumerate}
\end{theorem}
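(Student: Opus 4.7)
The plan is to treat (1) as the core analytic statement, and then derive (3) by unwinding definitions and (2) by invoking the fiber structure of Theorem \ref{th:fiber_structure}. For (1), I would define $\mathbf{I}_f$ as a multi-curve assembly of the one-curve operators of Theorem \ref{th:DavidSheniso}. Writing $\Omega_i^+ = f_i(\disk)$, $\Omega_i^- = \hat{\mathbb{C}}\setminus\overline{\Omega_i^+}$, and $\Gamma_i = \partial\Omega_i^+$, I would set
\[\mathbf{I}_f(h_1,\ldots,h_n) = \sum_{i=1}^n \bigl(P(\Omega_i^-)\,\mathcal{C}_{f_i^{-1}} h_i\bigr)\big|_{\riem^B}.\]
Boundedness of each summand into $\mathcal{D}_*(\Omega_i^-)$ is exactly Theorem \ref{th:DavidSheniso}, and restriction to $\riem^B \subset \Omega_i^-$ is a bounded map into $\mathcal{D}(\riem^B)$ by conformal invariance and continuity of restriction for Dirichlet spaces of nested domains.

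For the inverse, I would invoke the global jump decomposition on $\partial\riem^B = \bigsqcup_i \Gamma_i$. Given $h \in \mathcal{D}(\riem^B)$, the boundary values $h|_{\Gamma_i}$ lie in $\mathcal{H}(\Gamma_i)$ by the boundary-value theory for quasidisks, and Theorem \ref{th:decomposition} applied on each $\Gamma_i$ separately yields unique holomorphic components $h_i^\pm \in \mathcal{D}(\Omega_i^\pm)$. A contour-deformation argument on $\riem^B$ combined with holomorphy of $h$ shows that $h = \sum_i h_i^-$ on $\riem^B$, modulo a constant that is absorbed by the normalization at infinity built into $\mathcal{D}_*$. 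Transferring each $h_i^-$ to $\disk^-$ by the single-curve isomorphism $\mathbf{I}_{f_i}^{-1}$ of Theorem \ref{th:DavidSheniso} then yields the $n$-tuple inverting $\mathbf{I}_f$; injectivity of $\mathbf{I}_f$ itself is the uniqueness half of the same decomposition.

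Part (3) is then an algebraic consequence of (1). For $h \in \mathcal{D}(\riem^B)$, each coordinate $h\circ f_i|_{\mathbb{S}^1}$ lies in $\mathcal{H}_*(\mathbb{S}^1)$ and splits via \eqref{eq:HSonedecomp} as $g_i^+ + g_i^-$ with $g_i^\pm \in \mathcal{D}_*(\disk^\pm)$. Applying (1) identifies $h$ with $\mathbf{I}_f(g_1^-,\ldots,g_n^-)$, and the componentwise extension of the one-curve identity $\mathrm{Gr}_F = P(\disk^+)\,\mathcal{C}_F\,\mathbf{I}_F$ gives $(g_1^+,\ldots,g_n^+) = \mathbf{Gr}_f(g_1^-,\ldots,g_n^-)$. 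Hence $\mathcal{C}_f\mathcal{D}(\riem^B)$ is precisely the graph of $\mathbf{Gr}_f$ inside $\bigoplus^n \mathcal{H}_*(\mathbb{S}^1)$.

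For (2), the fiber structure of Theorem \ref{th:fiber_structure} exhibits $T(\riem^B)$ as a holomorphic bundle over the finite-dimensional base $T(\riem^P)$ with fibers isomorphic to $\mathcal{R}(\Oqc,\riem^P)$, and I would take the desired map to be base-projection paired with $[\riem^B,f]\mapsto \mathbf{Gr}_f$ on the fiber. Holomorphicity reduces to holomorphic dependence of $\mathbf{I}_f$ and of the boundary jump projections on the rigging $f$, which follows from the Hilbert-manifold charts on $\Oqc$ given by Theorem \ref{th:chi_induces_complex_structure} together with analyticity of $f\mapsto\mathcal{C}_f$. The main obstacle will be injectivity: one must recover $f$, modulo the relevant mapping class group, from the pair consisting of the conformal class $[\riem^P]$ and the operator $\mathbf{Gr}_f$. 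By (3), $\mathbf{Gr}_f$ determines the subspace $\mathcal{C}_f\mathcal{D}(\riem^B)$, and reconstructing the rigging from this subspace is a multi-curve generalization of conformal welding (Theorem \ref{th:welding}); establishing uniqueness of this reconstruction up to the discrete quotient inherent in Teichm\"uller space is where I expect the principal technical difficulty.
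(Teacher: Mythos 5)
First, a point of comparison: the paper itself gives no proof of Theorem~\ref{th:Grunsky_genus_zero}; it is quoted from the preprints \cite{RSS_Teich_embed} and \cite{RSS_Dirichletspace}, so your attempt can only be measured against the intended content of those works. Your architecture --- assembling $\mathbf{I}_f$ from the single-curve operators of Theorem~\ref{th:DavidSheniso}, inverting it via the jump decomposition, reading off (3) from the identities $P(\disk^{\mp})\,\mathcal{C}_F\,\mathbf{I}_F$, and obtaining (2) from the fiber structure of Theorem~\ref{th:fiber_structure} --- is the right one and does match the route of the cited papers. But two of the steps you lean on are not discharged. The first is surjectivity of $\mathbf{I}_f$: your ``contour-deformation argument'' showing $h=\sum_i h_i^-$ is the Cauchy reproducing formula for an $n$-connected domain bounded by quasicircles, and quasicircles are generically non-rectifiable, so there is no contour integral over $\partial\riem^B$ to deform. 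Everything must be routed through the limiting integrals $(\ref{eq:limiting_integrals})$ together with a density/approximation argument; this is precisely the hard analytic content of \cite{SchippersStaubach_Grunsky_quasicircle} for one curve and of \cite{RSS_Dirichletspace} for several, and naming it ``contour deformation'' does not supply it. Watch also the constants: Theorem~\ref{th:DavidSheniso} lands in the normalized space $\mathcal{D}_*(\Omega^-)$, while the target of $\mathbf{I}_f$ is the full $\mathcal{D}(\riem^B)$, so a sum of normalized pieces misses a one-dimensional subspace unless the definition is adjusted. Finally, note that $\mathbf{Gr}_f$ is emphatically not the block-diagonal sum of the $\mathrm{Gr}_{f_i}$ --- the off-diagonal blocks $P(\disk^+)\mathcal{C}_{f_i}(g_j)$, $j\neq i$, are the whole point --- and part (2) asserts that $\mathbf{Gr}_f$ is \emph{symmetric}, a Bergman--Schiffer-type fact you never address.

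The most serious gap is the one you flag yourself: injectivity in part (2). Recovering $f$ from the pair $([\riem^P],\mathbf{Gr}_f)$ is the heart of the period-map theorem, and it is not a routine corollary of conformal welding. Even for one curve, injectivity of $F\mapsto \mathrm{Gr}_F$ on the universal Teichm\"uller space requires showing that the graph subspace $W$ determines the welding homeomorphism modulo M\"obius transformations and then that the welding itself is unique, which rests on conformal rigidity of quasicircles (quasicircles are null sets for conformal welding); in the $n$-curve case one must in addition recover the mutual positions of the caps from the off-diagonal blocks. Deferring this to ``the principal technical difficulty'' means part (2) is unproven. The holomorphy claim in (2) also needs more than ``analyticity of $f\mapsto\mathcal{C}_f$'': holomorphic dependence of composition and projection operators on the symbol is delicate in this setting (compare Remark~\ref{re:quasisymmetries_not_group} for how badly composition can behave), and in the one-curve case the holomorphy of the Grunsky map is itself a substantial theorem of Takhtajan--Teo. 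Parts (1) and (3) of your sketch are essentially correct in outline modulo the reproducing-formula and normalization issues above; part (2) is where the proposal stops short of a proof.
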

The image of $\mathbf{Gr}_f$ is into the closed unit ball, and we conjecture that
it maps into the open unit ball.
That is, we map into $T(\riem^P)$ times a generalization of the Siegel disk.
The map can be thought of as a generalization of the classical
period map of Riemann surfaces; indeed, one can further apply the period map
to $T(\riem^P)$ if desired.  It  is remarkable that  an idea from conformal field theory,
the rigged moduli space, makes it possible to realize a vast generalization
of a very old and important idea in complex analysis (related to the
  existence of projective embeddings, line bundles over the surface, etc.)
  Conversely, a classical object, the Grunsky operator,
sheds light on constructions in conformal field theory.

In the case of the Weil-Petersson class Teichm\"uller space, this map is into
a space of Hilbert-Schmidt operators.  It is obvious that a section
of the determinant line
bundle (in genus-zero) is easily constructed from this map (note however that one
must put the constants back in; see \cite{Huang_CFT}).

We would like to describe $\mathcal{C}_f \in \mathcal{D}(\riem^B)$ in the case
of surfaces of higher genus, to obtain local trivializations for use in the
construction of the determinant line bundle of $\pi$.  We also need
to describe the kernel and co-kernel of $\pi$ as well as the sewing isomorphism for determinant lines.  This requires
the jump formula and Cauchy kernel on Riemann surfaces of higher genus; in the case of analytic riggings Radnell \cite{Radnell_thesis}
successfully used these techniques to construct the determinant line bundle and sewing isomorphism, although an explicit description of the
cokernel was unnecessary for genus greater than one.

 Using results of H. Tietz  it is possible to extend the isomorphism $\mathbf{I}_f$
 and the Grunsky operator to higher genus; see \cite{Reimer_Schippers} (Reimer))
 for the case of tori with one boundary curve.
 Work in progress of the authors characterizes $\mathcal{D}(\riem)$ and the
 cokernel of $\pi$ for Riemann surfaces of genus $g$ with one boundary curve.
 One difficulty is dealing with the case that the punctures are Weierstrass points.
\end{subsection}
\end{section}
\begin{section}{Epilogue: why Weil-Petersson class riggings?}
 Now with many results for context, we can make a case for quasisymmetric
 and WP-class riggings. Note that the theorems quoted in this
 section are in some sense collections of theorems, and not all of them belong to the authors of the present paper. Proper references have been given in the main text.

 {\bf Why quasisymmetric riggings?}

 If we make the choice $\mathcal{R}(\qs(\mathbb{S}^1),\riem^B)$ or equivalently
 $\mathcal{R}(\Oqc,\riem^P)$ for riggings, we obtain the following advantages.
 \begin{enumerate}
  \item The rigged moduli space is a quotient of the Teichm\"uller space of the
   bordered surface (Theorem \ref{th:correspondence}). Thus, we see the connection
   between conformal field theory and Teichm\"uller theory.
  \item The rigged moduli space therefore inherits a complex Banach
  manifold structure, with
   respect to which sewing is holomorphic.  (Theorems \ref{th:correspondence}
    and \ref{th:sewing_holomorphic}).
  \item The quasisymmetries are the largest class of riggings preserving
   the $H^{1/2}$ space on the boundary (Theorem \ref{th:quasisymmetries_largest}).
  \item  The Plemelj-Sokhotski decomposition holds on quasicircles, and
   the projections to the holomorphic functions inside and outside are
   bounded (Theorem \ref{th:decomposition}).  Thus, using quasisymmetric riggings presents
   no analytic obstacles to
   investigating decompositions on sewn surfaces.
  \item One may describe the pull-back of holomorphic functions of finite
   Dirichlet energy (precisely the completion of analytic functions inside the
   $H^{1/2}$ space \cite{Huang}) on a genus zero surface as the graph of
   a Grunsky operator \ref{th:Grunsky_genus_zero}.
   Thus, it permits the investigation of the decompositions
   using classical techniques.
  \item Furthermore, through the lens of conformal field theory, one can see that the Grunsky
   operator can be used to extend the constructions of the classical period matrices for compact Riemann surfaces, to the case of non-compact surfaces.
 \end{enumerate}

 The quasisymmetric setting is sufficient to get a well-defined theory of holomorphic
 vector bundles.  In this paper, we have considered
 holomorphic functions of finite Dirichlet energy on a Riemann surface, which
 is a vector
 bundle over Teichm\"uller space; this generalizes
 readily to hyperbolically bounded holomorphic $n$-differentials.
 However, for the determinant line bundle, we require WP-class riggings.

 {\bf Why Weil-Petersson class riggings?}
 If we choose the riggings to be $\mathcal{R}(\qs_{\text{WP}}(\mathbb{S}^1),\riem^B)$, or equivalently
 $\mathcal{R}(\Oqc_{\text{WP}},\riem^P)$, then we obtain the following
 advantages.
 \begin{enumerate}
  \item The rigged moduli space is a quotient of the Weil-Petersson
   class Teichm\"uller space by a discrete group action (Theorem
    \ref{th:WP_mapping_class_prop}).
  \item The rigged moduli space is thus a complex Hilbert manifold
   (Theorem \ref{th:WP_Hilbert_manifold}).
   {\it Conjecture}: the sewing operation is holomorphic with respect
   to this manifold structure.
  \item The set of Weil-Petersson class maps of the circle $\qs_{\text{WP}}(\mathbb{S}^1)$ is a topological
  group (Theorem \ref{th:WP_topological_group}), whereas
  $\qs(\mathbb{S}^1)$ is not (Remark \ref{re:quasisymmetries_not_group}).
  \item The Weil-Petersson class riggings are the completion of the analytic diffeomorphisms of
 the circle in $\mathrm{GL}_{\mathrm{res}}(\mathcal{H}_*(\mathbb{S}^1))$, so that the action
 of the central extension of $\mathrm{GL}_{\mathrm{res}}(\mathcal{H}_*(\mathbb{S}^1))$
 produces sections of the determinant line bundle.
 \item It is the largest
 class of riggings for which $pr_- = P(\disk^+): W \rightarrow \mathcal{D}_*(\disk^+)\mathbf{}$
 is Hilbert-Schmidt in the Segal-Wilson universal Grassmannian.
 \item The Cauchy integral is defined and the jump decomposition holds on
  seam created by sewing by a WP-class quasisymmetry \cite{RSS_WPjump}.
 \end{enumerate}
\end{section}

\end{document}